\newtheorem{prop}{Proposition}[section]
\newtheorem*{defi*}{Definition}
\newtheorem{defi}[prop]{Definition}
\newtheorem{rem}[prop]{Remark}
\newtheorem{thm}[prop]{Theorem}
\newtheorem{coro}[prop]{Corollary}
\numberwithin{equation}{section}
\title{Construction of Hunt processes by the Lyapunov method and applications to generalized Mehler semigroups}
\author{Lucian Beznea\footnote{Simion Stoilow Institute of Mathematics  of the Romanian Academy,
 Research unit No. 2, 
P.O. Box \mbox{1-764,} RO-014700 Bucharest, Romania, and 
POLITEHNICA Bucharest, CAMPUS Institute
(e-mail: lucian.beznea@imar.ro)},
Iulian C\^{i}mpean\footnote{Department of Mathematics, Faculty of Mathematics and Computer Science, University of Bucharest, 14 Academiei, 010014~Bucharest, Romania and Simion Stoilow Institute of Mathematics of the Romanian Academy, 21~Calea Grivi\c{t}ei, 010702~Bucharest, Romania
(email: iulian.cimpean@unibuc.ro; iulian.cimpean@imar.ro)},
Michael R\"ockner\footnote{Fakult\"at f\"ur Mathematik, Universit\"at Bielefeld,
Postfach 100 131, D-33501 Bielefeld, Germany, and Academy for Mathematics and Systems Science, CAS, Beijing 
(e-mail: roeckner@mathematik.uni-bielefeld.de)}}
\date{}
\begin{document}

\maketitle

\paragraph{Abstract.}
It is known that in general, generalized Mehler semigroups defined on a Hilber space $H$ may not correspond to càdlàg (or even càd) Markov processes with values in $H$ endowed with the norm topology.
In this paper we deal with the problem of characterizing those generalized Mehler semigroups that do correspond to càdlàg Markov processes, which is highly non-trivial and has remained open for more than a decade. 
Our approach is to reconsider the {\it càdlàg problem} for generalized Mehler semigroups as a particular case of the much broader problem of constructing Hunt (hence càdlàg and quasi-left continuous) processes from a given Markov semigroup. Following this strategy, a consistent part of this work is devoted to prove that starting from a Markov semigroup on a general (possibly non-metrizable) state space, the existence of a suitable Lyapunov function with relatively compact sub/sup-sets in conjunction with  a local Feller-type regularity of the resolvent are sufficient to ensure the existence of an associated càdlàg Markov process; if in addition the topology is locally generated by potentials, then the process is in fact Hunt. Other results of fine potential theoretic nature are also pointed out, an important one being the fact that the Hunt property of a process is stable under the change of the topology, as long as it is locally generated by potentials. 
Based on such general existence results, we derive checkable sufficient conditions for a large class of generalized Mehler semigroups in order to posses an associated Hunt process with values in the original space, in contrast to previous results where an extension of the state space was required; to this end, we first construct explicit Lyapunov functions whose sub-level sets are relatively compact with respect to the (non-metrizable) weak topology, and then we use the above mentioned stability to deduce the Hunt property with respect to the stronger norm topology.
As a particular example, we test these conditions on a stochastic heat equation on $L^2(D)$ whose drift is given by the Dirichlet Laplacian on a bounded domain $D \subset \mathbb{R}^d$, driven by a (non-diagonal) Lévy noise whose characteristic exponent is not necessarily Sazonov continuous; in this case, we construct the corresponding Mehler semigroup and we show that it is the transition function of a Hunt process that lives on the original space $L^2(D)$ endowed with the norm topology.\\

\noindent
{\bf Keywords:} 
Mehler semigroup;  
Hunt Markov process; 
Fine topology;
%Markov semigroup;
Lyapunov function;
%Stochastic heat equation; 
%Feller semigroup; 
%L\'evy noise; 
%c\`adl\`ag process;
Nest of compacts;
Stochastic differential equation; 
L\'evy-driven Ornstein-Uhlenbeck process. 
\\

\noindent
{\bf Mathematics Subject Classification (2020):} 
60H15,     % Stochastic partial differential equations [See also 35R60]
%60H10,     % Stochastic ordinary differential equations [See also 34F05]
60J45,  	% Probabilistic potential theory
60J35,    	% Transition functions, generators and resolvents
60J40,  	% Right processes
% 60J57,  	% Multiplicative functionals
% 31C25,      % Dirichlet spaces
% 47D07,  	% Markov semigroups and applications to diffusion processes 
60J25,      % Continuous-time Markov processes on general state spaces
% 60J60.      % Diffusion processes
60J65,  % Brownian motion
%47D03.  	%Groups and semigroups of linear operators 
47D06,  % One-parameter semigroups and linear evolution equations
%47A35  	%Ergodic theory
%37A30   %Ergodic theorems, spectral theory, Markov operators
%37C40   %Smooth ergodic theory, invariant measures
%37L40   %Invariant measures (Infinite-dimensional dissipative dynamical systems)
%60J55  	Local time and additive functionals
%82B10  	Quantum equilibrium statistical mechanics (general)
%31C05  	Harmonic, subharmonic, superharmonic functions
% 60J60, Diffusion processes
%35R60,  % Partial differential equations with randomness, stochastic partial differential equations
31C40.  %Fine potential theory; fine properties of sets and functions
\section{Introduction}
The theory of constructing Hunt Markov processes on a general state space is nowadays well developed, offering diverse tools (be they purely probabilistic, based on Dirichlet forms, or of potential theoretic nature) that can be applied in various situations; see e.g. \cite{BlGe68}, \cite{Sh88} or \cite{BeBo04a} for potential theoretic methods, \cite{FuOsTa11} for the method of symmetric Dirichlet forms, \cite{MaRo92} for the non-symmetric case, 
or \cite{St99b} and \cite{St99a}, 
and more recently \cite{HaStTr22} for the non-sectorial case.
Recall that in addition to the c\`adl\`ag property of the trajectories, a Hunt process is strong Markov and quasi-left continuous (see \Cref{defi:Hunt} below), hence it has no predictable jumps. 
Furthermore, it is sufficiently regular to be further exploited with fine tools from martingale theory and stochastic calculus; one ultimate goal here is eventually to show that the constructed Markov process renders a (weak) solution to a certain stochastic differential equation. 
The construction of the process usually starts from some analytic object which is given in advance, namely the semigroup (or the transition function), the resolvent, the generator, or the Dirichlet form. 
In this paper, we are concerned with Hunt Markov processes constructed from a given transition semigroup or the corresponding resolvent of probability kernels. 
The existence of such a process for a given transition semigroup is far from being trivial, even when the semigroup is strong Feller, as discussed in detail in \cite{BeCiRo24}.
In a nutshell, the state-of-the-art methods of constructing Hunt Markov processes are based on three types of regularity conditions that the underlying space and the transition function (or the resolvent) should fulfill:
\begin{enumerate}
    \item[(i)] The topological state space is homeomorphic to a Borel subset of a compact metric space (i.e. it is a Lusin topological space), in particular it is metrizable.
    This condition arises technically from the (probabilistic) need that the underlying topology is generated by a {\it countable} family of continuous functions that separate the points.
    \item[(ii)] The transition operators (or the resolvent) have the Feller property (i.e. they map continuous bounded functions into continuous bounded functions), or some variant of it. 
    This kind of condition is one important ingredient to deduce the strong Markov property.
    At a more subtle level, it is crucial in order to derive that the constructed Markov process renders a right-continuous solution to the corresponding martingale problem, for a certain class of test functions like the image of bounded measurable functions under the resolvent operators, i.e. the domain of the weak Dynkin generator.
    \item[(iii)] There is a sequence of increasing family of compact sets that {\it exhaust} the state space in the sense of some associated capacity, called  {\it nest of compacts}. 
    This condition is known in the literature to be not only sufficient, but also necessary in order for the process to have c\`adl\`ag paths with values in the underlying topological state space; see e.g. \cite{LyRo92} and \cite{BeBo05}.
\end{enumerate}
Conditions (i)-(iii) from above can be rigorously formalized and checked in many situations, especially when the state space is finite dimensional. 
In infinite dimensions settings, e.g. when one aims to construct Markov solutions to certain SPDEs by starting from the transition function, the above conditions become significantly hard to check, in some cases apparently impossible.
Having in mind the above three typical sufficient conditions, the reason for this difficulty is quite simple: The bounded sets are not relatively compact in infinite dimensions, e.g. on a Hilbert space, so proving the existence of a nest of compacts is much more challenging than in finite dimensions. Bounded sets are nevertheless relatively weakly compact, but the weak topology is non-metrizable, and moreover, showing the Feller property with respect to the weak topology is in principle very hard, or even impossible.
And even if one is able to construct a Markov process which is Hunt (or merely c\`adl\`ag) with respect to the weak topology, going further and showing that the process is in fact Hunt (or merely c\`adl\`ag) with respect to the strong (norm) topology is not easy at all, simply because the (square of the) norm is not weakly continuous, and in most cases, neither in the domain of the corresponding infinitezimal generator nor in the image of the resolvent.

In this paper we aim to address systematically the above issues for constructing Hunt Markov processes in general (possibly non-metrizable) state spaces, by employing the method of Lyapunov functions; see \Cref{thm:main_construction} below. 
The second aim is to test our techniques on {\it generalized Mehler semigroups} which have been intensively studied e.g. by \cite{Jn87}, \cite{BoRoSc96}, \cite{FuRo00}, \cite{LeRo04}, \cite{BrZa10}, \cite{Kn11}, \cite{PrZa11}, \cite{LiZh12}, \cite{PeZa13}, \cite{Ap15} \cite{Ri15}, \cite{ShRo16}, \cite{Ba17}, and this is just a short list.
More precisely, we shall provide answers to the problem of existence of c\`adl\`ag Markov processes associated with  such semigroups, which has been open for some time (see e.g. \cite{Bretal10}, \cite[pg. 99]{PrZa11},  \cite[Question 4, pg. 723]{PeZa13}, or \cite[pg. 40]{Ap15}).
We shall derive conditions for general L\'evy noises under which the Markov process is not just c\'adl\'ag but also Hunt; see \Cref{thm:Mehler} and \Cref{coro:application} from \Cref{S:Mehler}.
Such processes are infinite dimensional generalizations of L\'evy-driven Ornstein-Uhlenbeck processes, and they serve as a role model of infinite dimensional processes.
To explain our method and the technical issues that we have to overcome in infinite dimensions, let us look first at the $d$-dimensional case where the situation is much easier to grasp.
Recall that a {\it L\'evy-driven Ornstein-Uhlenbeck process} is the solution to a stochastic differential equation (SDE) of the form
\begin{equation} \label{eq:L-OU-Rd}
    dX^x(t)=AX^x(t) dt + d Z(t), \quad t>0, X(0)=x\in \mathbb{R}^d,
\end{equation}
where $Z(t),t\geq 0$,  is a L\'evy process in $\mathbb{R}^d$ defined on a filtered probability space $(\Omega,\mathcal{F},\mathcal{F}_t,\mathbb{P})$ satisfying the usual hypotheses, whilst $A\in \mathbb{R}^{d\times d}$ is a real matrix; see \cite{Ap09} or \cite{Sa99} for details.
Once the L\'evy process is given, it is very easy to solve explicitly the SDE \eqref{eq:L-OU-Rd} for any $x\in \mathbb{R}^d$ by the stochastic generalization of the variation of constants formula, namely
\begin{equation} \label{eq:sol-LOU-Rd}
    X^x(t)=e^{tA}x+\int_0^t e^{(t-s)A} d Z(s), \quad t\geq 0,
\end{equation}
in particular the process $X^x$ is $a.s.$ c\`adl\`ag with values in $\mathbb{R}^d$, in fact $X$ can be realized as a (Hunt) Markov process; see \Cref{defi:Hunt}.
There is another way of constructing the solution $X$ to SDE \eqref{eq:L-OU-Rd}, which is far more abstract, but the advantage is that it is sufficiently general to be lifted to infinite dimensions, after some technical upgrade.
It is based on the folklore principle that instead of directly constructing a c\`adl\`ag process, it should be an easier task to construct first its finite dimensional distributions, and then to look for further conditions that guarantee path regularity for (a modification of) the corresponding process obtained e.g. by Kolmogorov's construction. 
In our case, this principle goes as follows (see e.g. \cite[Section 17]{Sa99}): Let $Z$ have the triplet $(b,R,M)$, so that the {\it characteristic exponent} of $Z$ is given by
\begin{align}
    &\lambda:\mathbb{R}^d\rightarrow \mathbb{C}\\
    &\lambda(a)=-i\langle a, b\rangle + \frac{1}{2} \langle a, Ra\rangle - \int_{\mathbb{R}^d} \left(e^{i\langle a, b\rangle}-1-\frac{i\langle a, z\rangle}{1+\|z\|^2}\right) M(dz),
\end{align}
where $b\in \mathbb{R}^d$ is the drift, $R\in \mathbb{R}^{d\times d}$ is the covariance matrix, whilst $M$ is the L\'evy measure.
Recall that $\lambda$ is related to $Z$ by the L\'evy-Kynthcine formula
\begin{equation}
    \mathbb{E}\left\{ e^{i\langle\xi,Z(t)\rangle}\right\}=e^{-t\lambda(\xi)}, \quad \xi \in \mathbb{R}^d, t\geq 0.
\end{equation}
Further, by denoting
\begin{equation}
    Y(t):=\int_0^t e^{(t-s)A} d Z(s), \quad \mu_t:=\mathbb{P}\circ Y(t)^{-1}, \quad 
    \hat{\mu}_t(\xi):=\mathbb{E}\left\{ e^{i\langle\xi,Y(t)\rangle}\right\}, \quad t\geq 0, \xi\in \mathbb{R}^d,
\end{equation}
one can easily see that $\mu_t$ is uniquely determined by $\lambda$ and $A$, since it is uniquely determined by its Fourier transform $\hat{\mu}_t(\cdot)$ and moreover
\begin{equation}\label{eq:mu_hat}
    \hat{\mu}_t(\xi)=e^{-\int_0^{t}\lambda(e^{sA^\ast}\xi) ds}, \quad t\geq 0,\xi \in \mathbb{R}^d,
\end{equation}
where $A^\ast$ is the transpose of $A$.
Furthermore, $\mu_t,t\geq 0$,  has the following properties:
\begin{enumerate}
    \item[(i)] $\mu_0=\delta_0$ and $\mu_{t+s}=\mu_t\ast \left[\mu_s\circ\left (e^{tA}\right)^{-1}\right], t,s\geq 0$, i.e. $(\mu_t)_{t\geq 0}$ is a $e^{tA}$-{\it convolution} semigroup;
    \item[(ii)] $[0,\infty)\ni t\mapsto\mu_t$ is weakly continuous as a path in the space of probability measures on $\mathbb{R}^d$.
\end{enumerate}
Furthermore, we can define the operators
\begin{equation}\label{eq:P_t}
P_tf(x):=\int_{\mathbb{R}^d}f\left(e^{tA}x+y\right) \mu_t(dy), \quad t\geq 0, x\in \mathbb{R}^d,
\end{equation}
acting on bounded and measurable functions $f:\mathbb{R}^d\rightarrow \mathbb{R}$.
It is easy to see that for every $f:\mathbb{R}^d\rightarrow \mathbb{R}$ which is bounded and continuous we have that
\begin{equation}
    [0,\infty)\times \mathbb{R}^d \ni(t,x)\mapsto P_tf(x)\in \mathbb{R} \quad \mbox{ is jointly continuous}.
\end{equation}
$(P_t)_{t\geq 0}$ defined in \eqref{eq:P_t} is called the {\it generalized Mehler semigroup} induced by the semigroup $(e^{tA})_{t\geq 0}$ and the family of probability measures $(\mu_t)_{t\geq 0}$ satisfying (i) and (ii) from above.
Clearly, in the $d$-dimensional case from above we already know that $(P_t)_{t\geq 0}$ admits a c\'adl\'ag Markov process since
\begin{equation}
    P_tf(x)=\mathbb{E}\left\{f\left(e^{tA}x+Y(t)\right)\right\}=\mathbb{E}\left\{f(X^x(t))\right\},
\end{equation}
where $X^x(t), t\geq 0$,  is given by \eqref{eq:sol-LOU-Rd}.
In the next lines we shall explain the other way around, namely the construction of $X$ starting from the given semigroup $(P_t)_{t\geq 0}$ (again, see e.g. \cite[Section 17]{Sa99}); this will also be the strategy in infinite dimensions, where constructing Hunt processes corresponding SDE in general situations is no longer easy.
The scheme is as follows: We start from the L\'evy triplet $(b,R,M)$ and the matrix operator $A$, and consider the generalized Mehler semigroup $(P_t)_{t\geq 0}$ associated with  $\left(e^{tA}\right)_{t\geq 0}$ and $(\mu_t)_{t\geq 0}$ given by its Fourier transform \eqref{eq:mu_hat}.
Then, we show that there exists a Hunt Markov process $X$ on $\mathbb{R}^d$ whose transition function is precisely $(P_t)_{t\geq 0}$.
We emphasize once again that the existence of such a process is highly nontrivial in general. 
However, for the $d$-dimensional case illustrated above, the construction of $X$ starting from $(P_t)_{t\geq 0}$ is immediate since one can easily check that $(P_t)_{t\geq 0}$ is in fact a $C_0$-semigroup on the space of continuous functions on $\mathbb{R^d}$ vanishing at infinity, so that the well-known result \cite[Theorem 9.4]{BlGe68} that ensures the existence of an associated Hunt process can be employed; see the last paragraph of \Cref{S:Hunt} for the detailed statement.
Then, using the regularity properties of $X$, one can show that $X$ solves the {\it martingale problem} associated with the infinitezimal generator induced by $(P_t)_{t\geq 0}$ described on a sufficiently large class of test functions.
Finally, the SDE \eqref{eq:L-OU-Rd} can be weakly solved by the well-known correspondence between the well-posedness of the martingale problem and the (weak) well-posedness of \eqref{eq:L-OU-Rd}.
Of course, one can also go even further and invoke the Yamada-Watanabe theory to derive the strong well-posedness of \eqref{eq:L-OU-Rd}.
The step that we shall be mainly concerned with in this paper is the construction of a Hunt (hence c\'adl\'ag) process $X$ from its semigroup $(P_t)_{t\geq 0}$, on general state spaces, in particular for generalized Mehler semigroups in infinite dimensions.
The easy problem of constructing (Hunt) L\'evy-driven OU processes in $\mathbb{R}^d$ changes dramatically in infinite dimensions, especially if the drift $A$ is unbounded and the L\'evy noise is cylindrical; it is well-known that this is a sensitive issue even in the Gaussian case.
To explain this, let us consider a fundamental example: Let $D\subset \mathbb{R}^d$ be an open bounded subset, $\Delta_0$ be the Dirichlet Laplacian defined on $D(\Delta_0) \subset L^2(D)$, and $(T_t)_{t\geq 0}$ the $C_0$-semigroup on $L^2(D)$ generated by $\Delta_0$.
Furthermore, let $W(t),t\geq 0$ be a (cylindrical) Wiener process on $L^2(D)$ with covariance operator $\Sigma$, in other words it is an infinite dimensional L\'evy process on $L^2(D)$ with L\'evy triplet $(0,\Sigma,0)$, where $I$ is the identity operator on $L^2(D)$. 
As it is well known, $W$ does not exist on $L^2(D)$, but merely on a larger space in which $L^2(D)$ is Hilbert-Schmidt embedded.
Nevertheless, if $(T_t)_{t\geq 0}$ is smoothing enough, then a stochastic generalization of the variation of constants formula still holds.
More precisely, by \cite[Theorem 5.11]{DaZa14}, if for each $T>0$ there exists $0<\alpha<1$ such that
\begin{equation}\label{eq:condition_stoch_conv}
   \int_0^T t^{-\alpha}\|T_t\Sigma\|_{\sf HS}^2 dt<\infty,
\end{equation}
where $\|\ \|_{\sf HS}$ denotes the {\it Hilbert-Schmidt norm}, then the {\it stochastic convolution}
\begin{equation}\label{eq:stoch_conv}
    \int_0^tT_{t-s} d W(s), t\geq 0,
\end{equation}
is well defined as a stochastic process which has a version with a.s. continuous paths in $L^2(D)$,
and
\begin{equation}
    X^x(t):=T_tx+\int_0^tT_{t-s} d W(s), t\geq 0,
\end{equation}
is the unique weak solution to the {\it stochastic heat equation}
\begin{equation}\label{eq:stoch_heat_eq}
    dX^x(t)=\Delta_0 X^x(t)dt + dW(t), \quad t>0, X^x(0)=x\in L^2(D).
\end{equation}
Using the fact that $\Delta_0$ admits an eigenbasis $(e_k,\lambda_k)_{k\geq 1}$ where $0>\lambda_1>\lambda_2>\dots$ are the eigenvalues of $\Delta_0$ satisfying the Weyl asympthotics:
\begin{equation}\label{eq:Weyl}
    \exists \ C>0: \quad -C^{-1} k^{2/d} \leq \lambda_k\leq -C k^{2/d}, \quad k\geq 1,
\end{equation}
one can easily check that condition \eqref{eq:condition_stoch_conv} is fulfilled if and only if $d=1$; see e.g. \cite[Example 5.7]{DaZa14}.
In fact, if we assume that $\|\Sigma e_k\|\lesssim k^\gamma$ for some $\gamma \in \mathbb{R}$, then a straightforward computation reveals that condition \eqref{eq:condition_stoch_conv} becomes $\gamma<1/d-1/2$ for all $d\geq 1$.
The construction and path-regularity of the stochastic convolution \eqref{eq:stoch_conv} are derived in the literature by stochastic calculus and essentially probabilistic analysis, by employing the {\it factorization method}; see \cite[Subsection 5.3]{DaZa14}, or \cite[Subsection 6.3]{LiRo15}. 
In this paper, we investigate infinite dimensional L\'evy-driven Ornstein-Uhlenbeck processes given by the linear SDE
\begin{equation}\label{eq:OU_eq_Levy}
    dX^x(t)=A X^x(t)dt + dZ(t), \quad t>0, X^x(0)=x\in H,
\end{equation}  
where $A$ is a (possibly unbounded) linear operator on a general Hilbert space $H$ which generates a $C_0$-semigroup, whilst $Z$ is a general (cylindrical) L\'evy noise on $H$.
Such SDEs with general L\'evy noise have been studied by the method of starting from the generalized Mehler semigroup e.g. in \cite{BoRoSc96}, \cite{FuRo00}, \cite{LeRo04} in the time-homogeneous case, or by \cite{Kn11} and \cite{ShRo16} for the time-inhomogeneous case.
Fundamentally, it was shown in \cite[Theorem 5.3]{FuRo00} that the generalized Mehler semigroup associated with the SDE \eqref{eq:OU_eq_Levy} can be represented by a c\`adl\`ag Markov process, but on a larger space $E$ in which $H$ can be Hilbert-Schmidt embedded.
When $A$ is bounded and $\lambda$ is {\it Sazonov continuous} (see \cite{Saz58}, \cite{Li82}, or \cite{FuRo00} for details on the Sazonov topology), then such a process can be constructed on $H$ by \cite[Corollary 5.5]{FuRo00}, however, this is not the case of \eqref{eq:OU_eq_Levy} in general.
For example, \cite[Theorem 5.3]{FuRo00} can not be applied in order to show that when $d=1$, the generalized Mehler semigroup associated with the stochastic heat equation \eqref{eq:stoch_heat_eq} can be represented by a Markov process that lives in $L^2(D)$, but merely on a larger space; however, we know by the classical factorization method mentioned above that such a process exists and has continuous paths in $L^2(D)$, hence \cite[Theorem 5.3]{FuRo00} offers space for improvements.

Having in mind the fact that the solution to the stochastic heat equation with cylindrical Wiener noise \eqref{eq:stoch_heat_eq} has continuous paths in $H=L^2(D)$ if $d=1$, one might suspect that the smoothing properties of $(T_t)_{t\geq 0}$ should have some impact on the path-regularity of the solution to \eqref{eq:OU_eq_Levy} given by \eqref{eq:sol-LOU-Rd}, for a cylindrical L\'evy noise $Z$ that may have non-zero jump part as well.
However, this is not true in general. 
More precisely, it was first shown in \cite[Theorem 2.1]{Bretal10} that if $Z(t)=\sum\limits_{n\geq 1} \beta_n Z^{(n)}(t)e_n, t\geq 0$, where $\{e_n\}_{n\geq 1}\subset D(A^\ast)$ is an orthonormal basis in a Hilbert space $H$, $A$ generates a $C_0$-semigroup on $H$, whilst $\left(Z^{(n)}\right)_{n\geq 1}$ are iid L\'evy processes on $\mathbb{R}$ with non-zero jump intensity, then the following negative result holds:  {\it if $(\beta_n)_{n\geq 1}$ does not converge to $0$, then with probability $1$, the trajectories of $X$ given by \eqref{eq:sol-LOU-Rd} have no point $t\in [0,\infty)$ at which the left or right limit exists in $H$}.
It was shown later on in \cite{PrZa11} that if $\left(Z^{(n)}\right)_{n\geq 1}$ are iid $\alpha$-stable L\'evy processes on $\mathbb{R}$ with $\alpha\in (0,2)$, then $\sum\limits_{n\geq 1} \beta_n^\alpha <\infty$ is a sufficient condition to ensure that $X$ (and in fact $Z$) has c\`adl\`ag paths in $H$.
In \cite{LiZh12} it was proved that the above condition is also necessary, and this characterization has been extended in \cite{LiZh16} for $\alpha$-semi-stable diagonal L\'evy noises. 
Thus, the smoothing property of $(T_t)_{t\geq 0}$ may have limited impact on the $H$-c\`adl\`ag regularity of the stochastic convolution with respect to a L\'evy noise in general, and in the particular case of $\alpha$-stable diagonal noises it has no impact at all. 
Further steps have been achieved in \cite[Theorem 3.1]{PeZa13}, showing that if the L\'evy noise takes values in the domain of some convenient (negative) power of $-A$ and some further moment bounds for the L\'evy measure are satisfied, then $X$ has a c\`adl\`ag modification in $H$ even if the noise $Z$ lives merely on a larger space; however, this result does not cover the diagonal $\alpha$-stable case from \cite{LiZh12}, as explained in \cite[Remark 3.5]{PeZa13}.
It is worth to mention that these results have been obtained by stochastic analysis tools like maximal inequalities for the norm of the stochastic convolution, whilst our approach relies on the construction of some convenient Lyapunov function which plays the role of the norm function, but it is fundamentally more flexible, in particular it is allowed to take infinite values on a set which is polar.

{\it To summarize, in general situations, the question if, and under which conditions, a Markov process associated to \eqref{eq:OU_eq_Levy} can be constructed having c\`adl\`ag paths in the original space $H$, has remained fundamentally open. In this paper we aim to address it from a potential theoretic perspective, starting from the generalized Mehler semigroup.}

The main issues in constructing Hunt Markov processes associated with  the generalized Mehler semigroups corresponding to \eqref{eq:OU_eq_Levy} emerge from the
difficulty of checking conditions (i)-(iii) from the beginning of this section.
Thus, the first aim of this paper is to revisit the general theory of constructing regular Markov processes in order to replace conditions (i)-(iii) with others that can be efficiently used in the case of non-metrizable topologies, in particular for generalized Mehler semigroups and weak topologies in Hilbert spaces.
The main general result in this direction is \Cref{thm:main_construction} from \Cref{S:Hunt}.
Based on this general result, we prove that under natural assumptions (more precisely under condition $\mathbf{(H_{A,\mu})}$ in \Cref{S:Mehler}), the generalized Mehler semigroup associated with \eqref{eq:OU_eq_Levy} is the transition function of a Hunt Markov process that lives on the original Hilbert space $H$ endowed with the norm topology; we do this in \Cref{thm:Mehler}.
One crucial step that had to be achieved is the construction of a proper Lyapunov function whose sublevel sets are compact with respect to the weak topology, and that the norm and the weak topologies are locally topologies that are generated by potentials; see \Cref{prop:tau_||}, \Cref{prop:V_H_mu_1}, and \Cref{prop:tau_w:quasi-U}.
As an application of \Cref{thm:Mehler}, more precisely in \Cref{coro:application}, we give explicit conditions under which the generalized Mehler semigroup associated with   
the SPDE
\begin{equation}\label{eq:stoch_heat_eq_Levy}
    dX^x(t)=\Delta_0 X^x(t)dt + dZ(t), \quad t>0, X^x(0)=x\in L^2(D), \quad D\subset \mathbb{R}^d,
\end{equation}
is the transition function of a Hunt Markov process on the original space $L^2(D)$ with respect to the norm topology; in this example, $Z(t),t\geq 0$, is a (non-diagonal and cylindrical) L\'evy noise on $L^2(D)$ whose characteristic exponent $\lambda:L^2(D)\rightarrow \mathbb{C}$ is given by
\begin{equation}
    \lambda(\xi):=\|\Sigma_1 \xi\|_{L^2(D)}^2+ \|\Sigma_2 \xi\|_{L^2(D)}^\alpha, \quad \xi \in L^2(D), \quad \alpha \in (0,2),
\end{equation}
whilst $\Sigma_1$ and $\Sigma_2$ are positive definite bounded linear operators on $L^2(D)$.
Some further fine regularity properties of the constructed Markov processes are obtained as byproducts of our potential theoretic approach.

The structure of the paper is the following: In \Cref{S:Resolvents} we provide a brief overview on the main probabilistic potential theoretic tools that we employ for proving the main results, we introduce the key notion of {\it quasi-$\mathcal{U}$ topology}, and discuss the stability of the Hunt property under changing the underlying topology.
In \Cref{S:Hunt} we state and prove the main result on constructing Hunt processes in general (possibly non-metrizable) topological spaces, by the method of Lyapunov functions.
Then, in \Cref{S:Mehler}, based on the general result obtained in \Cref{S:Hunt}, we derive conditions for a large class of generalized Mehler semigroups that ensure the existence of corresponding Hunt processes living on the original Hilbert space endowed with the norm topology. 
Finally, we test these conditions on a particular case, namely the one given by \eqref{eq:stoch_heat_eq_Levy} from above.

\section{Resolvents, right and Hunt processes}\label{S:Resolvents}
Throughout this section we follow mainly the terminology of \cite{BeBo04a}, but we also heavily refer to the classical works \cite{BlGe68}, \cite{Sh88}; see also the references therein. 

Let $(E, \mathcal{B})$ be a Lusin measurable space, i.e. it is measurably isomorphic to a Borel subset of a compact metric space endowed with the corresponding Borel $\sigma$-algebra.
We denote by $(b)p\mathcal{B}$ the set of all numerical, (bounded) positive $\mathcal{B}$-measurable functions on $E$.
Throughout, by $\mathcal{U}=(U_\alpha)_{\alpha>0}$ we denote a resolvent family of (sub-)Markovian kernels on $(E, \mathcal{B})$.
If $\beta>0$, we set $\mathcal{U}_\beta:=(U_{\alpha+\beta})_{\alpha>0}$.

\begin{defi} \label{defi 4.1}
An universally $\mathcal{B}$-measurable function $v:E\rightarrow \overline{\mathbb{R}}_+$ is called 
$\mathcal{U}$--{\rm excessive} 
%(w.r.t. $\mathcal{U}$) 
provided that  
$\alpha U_\alpha v \leq v$ for all $\alpha >0$ and $\mathop{\sup}\limits_\alpha \alpha U_\alpha v =v $ point-wise; by $\mathcal{E(\mathcal{U})}$ we denote the convex cone of all $\mathcal{B}$-measurable 
 $\mathcal{U}$--excessive functions.
 %w.r.t. $\mathcal{U}$.
\end{defi}

\begin{rem} \label{rem:U_exc}
Recall that if $f\in p\mathcal{B}$, then $U_\alpha f$ is $\mathcal{U}_\alpha$-excessive.
\end{rem}

If a universally $\mathcal{B}$-measurable function $w: E \rightarrow \overline{\mathbb{R}}_+$ is merely $\mathcal{U}_\beta$-supermedian (i.e. $\alpha U_{\alpha+\beta} w \leq w$ for all $\alpha > 0$), then its $\mathcal{U}_\beta$--{\it excessive regularization} 
%$\widehat{w} \in \mathcal{E(U)}$ 
is defined as
$\widehat{w}:=\mathop{\sup}\limits_\alpha \alpha U_{\beta +\alpha}w$. 
The function $\widehat{w}$ is 
$\mathcal{U}_\beta$--excessive and 
$\widehat{w} \in \mathcal{E(U_\beta)}$ provided that 
$w$ is  in addition $\mathcal{B}$-measurable.

\begin{defi} \label{defi: fine_topology}
The {\it fine topology} $\tau^f$ on $E$ (associated with $\mathcal{U}$) is the coarsest topology on $E$ 
such that every $\mathcal{U}_q$-excessive function is continuous for some (hence all) $q>0$.
\end{defi}

\begin{prop}\label{prop:zero_open}
    If $u$ is $\mathcal{U}_\beta$-excessive for some $\beta>0$, then the set $[u=0]$ is finely open.
\end{prop}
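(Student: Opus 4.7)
The plan is to exhibit, for each $x_0\in[u=0]$, a finely open set $V$ with $x_0\in V\subseteq[u=0]$. The first step is to show that $U_{\alpha+\beta}(\mathbf{1}_{[u>0]})(x_0)=0$ for every $\alpha>0$. From the $\mathcal{U}_\beta$-excessive inequality $\alpha U_{\alpha+\beta}u(x_0)\le u(x_0)=0$ one gets $U_{\alpha+\beta}u(x_0)=0$. Since $\epsilon\,\mathbf{1}_{[u>\epsilon]}\le u$ pointwise for every $\epsilon>0$, monotonicity of the resolvent gives $U_{\alpha+\beta}(\mathbf{1}_{[u>\epsilon]})(x_0)=0$, and monotone convergence as $\epsilon\downarrow 0$ yields the claim.

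Applying the same computation at each $y\in E$ shows that $\mathbf{1}_{[u>0]}$ is $\mathcal{U}_\beta$-supermedian: the image $\alpha U_{\alpha+\beta}(\mathbf{1}_{[u>0]})$ vanishes on $[u=0]$ by the argument above, and on $[u>0]$ it is bounded by $\alpha U_{\alpha+\beta}\mathbf{1}\le 1$ by sub-Markovianity. Let $w:=\widehat{\mathbf{1}_{[u>0]}}$ denote its $\mathcal{U}_\beta$-excessive regularization. Then $w\in\mathcal{E}(\mathcal{U}_\beta)$, $w\le\mathbf{1}_{[u>0]}$, and $w(x_0)=\sup_\alpha\alpha U_{\alpha+\beta}(\mathbf{1}_{[u>0]})(x_0)=0$. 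By \Cref{defi: fine_topology}, $w$ is finely continuous, so $V:=\{y:w(y)<1\}$ is a finely open neighborhood of $x_0$.

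It remains to verify $V\subseteq[u=0]$, i.e.\ $w=1$ on $[u>0]$. The strategy is to represent $\mathbf{1}_{[u>0]}$ as the pointwise increasing limit $(nu)\wedge 1\uparrow\mathbf{1}_{[u>0]}$; on $[u>0]$ the approximants equal $1$ eventually, and passing this monotone limit through the excessive regularization should produce $w=1$ there. The main obstacle is precisely this final equality: in general the excessive regularization of a supermedian function can strictly decrease it on a semipolar set, so one must justify carefully the interchange $\sup_\alpha\alpha U_{\alpha+\beta}\bigl(\lim_n(nu)\wedge 1\bigr)=\lim_n\sup_\alpha\alpha U_{\alpha+\beta}\bigl((nu)\wedge 1\bigr)$ and rule out any semipolar loss within $[u>0]$. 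If needed, one falls back to working directly with the excessive approximants $\widehat{(nu)\wedge 1}$, which all vanish at $x_0$, and building $V$ from their common sublevel set rather than from $w$ itself.
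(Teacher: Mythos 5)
Your reduction is correct up to exactly the point you flag yourself, and that point is a genuine gap: the whole argument hinges on showing that the excessive regularization $w=\widehat{\mathbf{1}_{[u>0]}}$ equals $1$ everywhere on $[u>0]$, and nothing in your proposal establishes this. For $y\in[u>0]$ the excessivity of $u$ only gives $\sup_\alpha\alpha U_{\alpha+\beta}u(y)=u(y)>0$, i.e.\ the measures $\alpha U_{\alpha+\beta}(y,\cdot)$ charge $[u>0]$ enough to recover the integral of $u$; when $u$ is unbounded this is compatible with $\alpha U_{\alpha+\beta}(y,[u>0])$ staying bounded away from $1$, so no direct estimate forces $w(y)=1$, and $[w<1]$ could leak outside $[u=0]$. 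Your fallback does not repair this: since $\widehat{(nu)\wedge 1}\leq (nu)\wedge 1$, a sublevel set $[\widehat{(nu)\wedge 1}<c]$ may contain points with $u\geq c/n>0$, and the intersection over all $n$ of such sets is a countable intersection of finely open sets, which need not be finely open.

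The missing ingredient is the standing hypothesis of the section, namely $\mathbf{(H_{\mathcal{C}})}$, equivalently assertion (ii) of Proposition \ref{prop 2.5}: the cone $\mathcal{E}(\mathcal{U}_\beta)$ is min-stable and contains the constants. Granting this, $(nu)\wedge 1$ is not merely $\mathcal{U}_\beta$-supermedian but genuinely $\mathcal{U}_\beta$-excessive, being the infimum of the two excessive functions $nu$ and $1$; no regularization is performed, so no ``semipolar loss'' can occur. Since an increasing pointwise supremum of $\mathcal{U}_\beta$-excessive functions is again $\mathcal{U}_\beta$-excessive (interchange the suprema over $n$ and over $\alpha$ and use monotone convergence), the function $\mathbf{1}_{[u>0]}=\sup_n\,(nu)\wedge 1$ is $\mathcal{U}_\beta$-excessive, hence finely continuous by \Cref{defi: fine_topology}, and $[u=0]=\mathbf{1}_{[u>0]}^{-1}\bigl([0,1)\bigr)$ is finely open. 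This also shows a posteriori that your $w$ coincides with $\mathbf{1}_{[u>0]}$, and that the supermedian analysis in your first two paragraphs, while correct, becomes unnecessary once min-stability is invoked. (The probabilistic alternative --- a nonnegative right-continuous supermartingale started at $0$ stays at $0$, so $e^{-\beta t}u(X_t)\equiv 0$ under $\mathbb{P}^{x_0}$ --- presupposes an associated right process and is therefore not available at this stage of the paper.)
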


\begin{defi} \label{defi:natual_topology}
A topology $\tau$ on $E$ is called natural if it is a Lusin topology (i.e. $(E,\tau)$ is homeomorphic to a Borel subset of a compact metrizable space) which is coarser than the fine topology, and whose Borel $\sigma$-algebra is $\mathcal{B}$.    
\end{defi}

\begin{rem} \label{rem 4.3}
\begin{enumerate}[(i)]
    \item The necessity of considering natural topologies comes from the fact that, in general, the fine topology is neither metrizable, nor countably generated; see also \Cref{prop:quasi_left} below.
    \item Let $(E,\mathcal{B})$ be a Lusin measurable space, and $(f_n)_{n\geq 1}\subset \mathcal{B}$ be such that it separates the points of $E$. 
    Then the topology on $E$ generated by $(f_n)_{n\geq 1}$ is a Lusin topology; see e.g. \cite[Theorem A2.12]{Sh88} for the similar Radonian case.
\end{enumerate}
\end{rem}

There is a convenient class of natural topologies to work with (as we do in Section 2), especially when the aim is to construct a right process associated with  $\mathcal{U}$ (see Definition \ref{defi 4.4}). These topologies are called Ray topologies, and are defined as follows.

\begin{defi} \label{defi 4.5}
\begin{enumerate}[(i)]
\item If $\beta >0$ then a Ray cone associated with $\mathcal{U}_\beta$ is a cone $\mathcal{R}$ of bounded $\mathcal{U}_\beta$-excessive functions which is separable in the supremum norm, min-stable, contains the constant function $1$, generates $\mathcal{B}$, $U_\alpha(\mathcal{R}) \subset \mathcal{R}$ for all $\alpha > 0$, and $U_\beta((\mathcal{R}-\mathcal{R})_+) \subset \mathcal{R}$.
\item A {\it Ray topology} on $E$ is a topology generated by a Ray cone $\mathcal{R}$, and it is denoted by $\tau_{\mathcal{R}}$.
\end{enumerate} 
\end{defi}

In order to ensure the existence of a Ray topology, as well as many other useful properties related to the fine topology and excessive functions, we need the following condition:\\

\noindent
$\mathbf{(H_{\mathcal{C}})}.$ $\mathcal{C}$ is a min-stable convex cone of non-negative $\mathcal{B}(E)$-measurable functions such that
\begin{enumerate}[(i)]
    \item $1\in \mathcal{C}$ and there exists a countable subset of $\mathcal{C}$ which separates the points of $E$,
    \item $U_\alpha(\mathcal{C})\subset \mathcal{C}$ for all $f\in \mathcal{C}$ and $\alpha>0$,
    \item $\lim\limits_{\alpha\to\infty}\alpha U_{\alpha}f=f$ pointwise on $E$ for all $f\in \mathcal{C}$.
\end{enumerate}

The following result is basically Corollary 2.3 from \cite{BeRo11a}:
\begin{prop} \label{prop 2.5}
The following assertions are equivalent.
\begin{enumerate}[(i)]
    \item There exists a cone $\mathcal{C}$ such that $\mathbf{(H_{\mathcal{C}})}$ is fulfilled.
    \item $\mathcal{E}(\mathcal{U}_\beta)$ is min stable, contains the constant functions, and generates $\mathcal{B}$ for one (hence all) $\beta >0$.
    \item For any $\beta>0$ there exists a Ray cone associated with $\mathcal{U}_\beta$. 
\end{enumerate}
\end{prop}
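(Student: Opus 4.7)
I would establish the equivalence via the cyclic chain (iii) $\Rightarrow$ (i) $\Rightarrow$ (ii) $\Rightarrow$ (iii), each step exploiting the resolvent equation and the structure of $\mathcal{E}(\mathcal{U}_\beta)$.

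For (iii) $\Rightarrow$ (i), the natural choice is $\mathcal{C} := \mathcal{R}$. Min-stability, inclusion of the constant $1$, stability under every $U_\alpha$, and the existence of a countable point-separating subfamily are immediate from the sup-norm separability of $\mathcal{R}$ together with the fact that it generates $\mathcal{B}$. The delicate point is the pointwise convergence $\lim_{\alpha \to \infty} \alpha U_\alpha f = f$ for $f \in \mathcal{R}$, since $f$ is a priori only $\mathcal{U}_\beta$-excessive. Here I would invoke the resolvent equation $U_\alpha - U_{\alpha+\beta} = \beta\, U_\alpha U_{\alpha+\beta}$, which gives the lower bound $\alpha U_\alpha f \geq \alpha U_{\alpha+\beta} f \uparrow f$ and, after a short manipulation, the upper bound $\alpha U_\alpha f \leq \tfrac{\alpha}{\alpha-\beta}\, \alpha U_{\alpha+\beta} f$ for $\alpha > \beta$, which together force the required limit.

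For (i) $\Rightarrow$ (ii), I would pass from $\mathcal{C}$ to the family $\beta U_\beta(\mathcal{C})$; by the resolvent equation this family lies in $\mathcal{E}(\mathcal{U}_\beta)$, contains $\beta U_\beta 1$, and by $\mathbf{(H_{\mathcal{C}})}$(iii) it approximates $\mathcal{C}$ pointwise as $\beta \to \infty$, hence it inherits a countable point-separating subfamily and generates $\mathcal{B}$. To obtain the min-stability of the full cone $\mathcal{E}(\mathcal{U}_\beta)$, I would pick $u,v \in \mathcal{E}(\mathcal{U}_\beta)$, observe that $\min(u,v)$ is automatically $\mathcal{U}_\beta$-supermedian, and show that its $\mathcal{U}_\beta$-excessive regularization coincides with it by approximating $u$ and $v$ from below by elements built from $\beta U_\beta(\mathcal{C})$ and using the min-stability of $\mathcal{C}$. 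For (ii) $\Rightarrow$ (iii), I would construct the Ray cone iteratively: starting from a countable min-stable convex cone $\mathcal{F}_0 \subset \mathcal{E}(\mathcal{U}_\beta)$ that contains $1$ and separates the points of $E$, define recursively
\[
\mathcal{F}_{n+1} = \text{min-stable convex cone generated by } \mathcal{F}_n \cup \{q U_q f : q \in \mathbb{Q}_+^\ast,\ f \in \mathcal{F}_n\} \cup \{U_\beta(f-g)_+ : f,g \in \mathcal{F}_n\},
\]
and take $\mathcal{R}$ to be the supremum-norm closure of $\bigcup_n \mathcal{F}_n$. The min-stability of $\mathcal{E}(\mathcal{U}_\beta)$ is exactly what guarantees that each $\mathcal{F}_n$ remains inside the excessive cone, while the resolvent equation gives $q U_q f \in \mathcal{E}(\mathcal{U}_\beta)$ whenever $f \in \mathcal{E}(\mathcal{U}_\beta)$; the countability at every stage is preserved by restricting to rational parameters, which yields sup-norm separability of $\mathcal{R}$ in the limit.

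The main obstacle is the min-stability of $\mathcal{E}(\mathcal{U}_\beta)$ as a genuine cone of \emph{excessive} functions, which is required both to pass from (i) to (ii) and to keep the iterative construction inside the excessive cone in (ii) $\Rightarrow$ (iii). Showing that $\min(u,v)$ is actually excessive and not merely supermedian is the technically hardest point, and it typically rests on a fine-topological argument combined with the pointwise approximation property provided by $\mathbf{(H_{\mathcal{C}})}$(iii).
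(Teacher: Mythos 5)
First, a point of reference: the paper does not actually prove this proposition --- it is stated as ``basically Corollary 2.3 from \cite{BeRo11a}'', and the only piece of the argument reproduced in the paper is the inductive Ray-cone construction recalled in \Cref{rem: ray_topology}(ii). So your proposal must be measured against that reference rather than an in-paper proof. Your step (iii) $\Rightarrow$ (i) is correct and essentially complete: the two-sided bound $\alpha U_{\alpha+\beta}f\le \alpha U_\alpha f\le \frac{\alpha}{\alpha-\beta}\,\alpha U_{\alpha+\beta}f$ for $\mathcal{U}_\beta$-excessive $f$ (valid for $\alpha>\beta$ via $(\alpha-\beta)U_\alpha f\le f$ and the resolvent equation) does force $\alpha U_\alpha f\to f$. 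Your step (ii) $\Rightarrow$ (iii) is the same construction as \Cref{rem: ray_topology}(ii), and granted the min-stability of $\mathcal{E}(\mathcal{U}_\beta)$ the verifications there are routine.

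The genuine gap is the min-stability of $\mathcal{E}(\mathcal{U}_\beta)$ in (i) $\Rightarrow$ (ii), which you rightly flag as the crux but for which your sketch does not work. Two concrete problems. First, a general $u\in\mathcal{E}(\mathcal{U}_\beta)$ is the increasing limit of the potentials $\alpha U_{\alpha+\beta}u=U_\beta\bigl(\alpha(u-\alpha U_{\alpha+\beta}u)\bigr)$, i.e.\ of elements of $U_\beta(p\mathcal{B})$, \emph{not} of elements of $U_\beta(\mathcal{C})$; nothing in $\mathbf{(H_{\mathcal{C}})}$ provides approximation of an arbitrary excessive function from below by $U_\beta(\mathcal{C})$. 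Second, even for $f,g\in\mathcal{C}$, min-stability of $\mathcal{C}$ only gives $U_\beta(f\wedge g)\in\mathcal{E}(\mathcal{U}_\beta)$, and $U_\beta(f\wedge g)\le U_\beta f\wedge U_\beta g$ with strict inequality in general; a direct resolvent-equation computation yields only $\widehat{U_\beta f\wedge U_\beta g}\ge U_\beta(f\wedge g)$, which does not identify the regularization of $U_\beta f\wedge U_\beta g$ with itself. (A related slip: for a \emph{fixed} $\beta_0$ the functions $\beta U_\beta f$ with $\beta\to\infty$ need not lie in $\mathcal{E}(\mathcal{U}_{\beta_0})$, since $U_{\alpha+\beta_0}f$ is not $\mathcal{U}_{\beta_0}$-supermedian in general; that $\mathcal{E}(\mathcal{U}_{\beta_0})$ generates $\mathcal{B}$ should instead be seen from $U_{\alpha+\beta_0}f=U_{\beta_0}f-\alpha U_{\beta_0}(U_{\alpha+\beta_0}f)$ together with $\alpha U_{\alpha+\beta_0}f\to f$.) The route in \cite{BeRo11a} and \cite{BeBo04a} effectively reverses your order: the Ray cone and Ray topology are built directly from $\mathbf{(H_{\mathcal{C}})}$, one then works on the Ray compactification where $\mathbf{(H_{\mathcal{C}})}$(iii) guarantees $\alpha U_{\alpha+\beta}(x,\cdot)\to\varepsilon_x$ weakly at every $x\in E$ (no branch points in $E$), and min-stability of $\mathcal{E}(\mathcal{U}_\beta)$ follows from the lower semicontinuity of excessive functions in the Ray topology. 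Your closing appeal to ``a fine-topological argument'' points at exactly this mechanism, but in your chain of implications the relevant topology is only constructed afterwards, so as organized the argument is incomplete at its central step.
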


For the rest of this section we assume that one (hence all) of the assertions from \ref{prop 2.5} is valid.
This is always satisfied when there is a right Markov process with resolvent $\mathcal{U}$, whose definition is given below.

\begin{rem} \label{rem: ray_topology}
\begin{enumerate}[(i)]
\item It is clear that any Ray topology is a natural topology.
A useful converse is true: For any natural topology there exists a finer Ray topology; see Proposition 2.1 from \cite{BeBo05}.
In fact, a key ingredient here is that given a countable family of finite $\mathcal{U}_\beta$-excessive functions, one can always find a Ray cone that contains it; we shall use this fact later on.
\item As a matter of fact, if $(\mathbf{H_\mathcal{C}})$ holds, then one can explicitly construct plenty of Ray topologies, following e.g. \cite{BeBo04a}, Proposition 1.5.1, or \cite{BeRo11a}, Proposition 2.2.
As we shall need such a construction later on, let us detail it here.
Let $\mathcal{A}_0\subset p\mathcal{B}_{b}$ be countable and such that it separates the set of finite measures on $(E,\mathcal{B})$.
The {\rm Ray cone generated by $\mathcal{A}_0$} and associated with  $\mathcal{U}_\beta$ is defined inductively as follows: 
\begin{align*}
&\mathcal{R}_0:=U_\beta (\mathcal{A}_0)\cup \mathbb{Q}_+,\\
&\mathcal{R}_{n+1}:=\mathbb{Q}_+\cdot \mathcal{R}_n \cup \left(\mathop{\sum}\limits_f\mathcal{R}_n \right)\cup \left(\mathop{\bigwedge}\limits_f \mathcal{R}_n\right) \cup \left(\mathop{\bigcup}\limits_{\alpha \in \mathbb{Q}_+}U_\alpha(\mathcal{R}_n)\right)\cup U_\beta ((\mathcal{R}_n-\mathcal{R}_n)_+),
\end{align*}
where by $\mathop{\sum}\limits_f \mathcal{R}_n$ resp. $\mathop{\bigwedge}\limits_f\mathcal{R}_n$ we denote the space of all finite sums (resp. infima) of elements from $\mathcal{R}_n$.
Then, the Ray cone $\mathcal{R}_{\mathcal{A}_0}$ generated by $\mathcal{A}_0$ is obtained by taking the closure of $\bigcup\limits_n \mathcal{R}_n$ w.r.t. the supremum norm.
\end{enumerate}
\end{rem}

\noindent
{\bf Right processes.} 
Let now $X=(\Omega, \mathcal{F}, \mathcal{F}_t , X(t), \theta(t) , \mathbb{P}^x)$ be a normal Markov process with state space $E$, shift operators $\theta(t):\Omega\rightarrow \Omega, \; t\geq 0$, and lifetime $\zeta$; if $\Delta$ denotes the cemetery point attached to $E$, then any numerical function $f$ on $E$ shall be extended to $\Delta$ by setting $f(\Delta)=0$. 

 We assume that $X$ has the resolvent $\mathcal{U}$ fixed above, i.e. for all $f\in b\mathcal{B}$ and $\alpha >0$
$$
U_\alpha f(x)=\mathbb{E}^{x} \! \int_0^{\infty}e^{-\alpha t}f(X(t))dt,\quad  x\in E.
$$

To each probability measure $\mu$ on $(E, \mathcal{B})$ we associate the probability 
$$\mathbb{P}^\mu (A):=\mathop{\int} \mathbb{P}^x(A)\; \mu(dx)$$
for all $A \in \mathcal{F}$, and we consider the following enlarged filtration
$$
\widetilde{\mathcal{F}}_t:= \bigcap\limits_\mu \mathcal{F}_t^\mu, \; \; \widetilde{\mathcal{F}}:= \bigcap\limits_\mu \mathcal{F}^\mu,
$$
where $\mathcal{F}^\mu$ is the completion of $\mathcal{F}$ under $\mathbb{P}^\mu$, and $\mathcal{F}_t^\mu$ is the completion of $\mathcal{F}_t$ in $\mathcal{F}^\mu$ w.r.t. $\mathbb{P}^\mu$; in particular, $(x,A)\mapsto\mathbb{P}^{x}(A)$ is assumed to be a kernel from $(E,\mathcal{B}^{u})$ to $(\Omega, \mathcal{F})$, where $\mathcal{B}^{u}$ denotes the $\sigma$-algebra of all universally measurable subsets of $E$.

\begin{defi} \label{defi 4.4}
The Markov process $X$ is called a right (Markov) process if the following additional hypotheses are satisfied:
\begin{enumerate}[(i)]
\item The filtration $(\mathcal{F}_t)_{t\geq 0}$ is right continuous and $\mathcal{F}_t=\widetilde{\mathcal{F}}_t, t\geq 0$.
\item For one (hence all) $\alpha>0$ and for each $f \in \mathcal{E}(\mathcal{U}_\alpha)$ the process $f(X)$ has right continuous paths $\mathbb{P}^{x}$-a.s. for all $x\in E$.
\item There exists a natural topology on $E$ with respect to which the paths of $X$ are $\mathbb{P}^{x}$-a.s. right continuous for all $x\in E$.
\end{enumerate}
\end{defi}

\begin{rem} 
 Usually in the literature, the space $E$ is endowed with a certain given topology, and the right continuity of the paths of the right process is considered in that topology. 
 In contrast, Definition \ref{defi 4.4} uses no a priori topology on $E$, which is endowed merely with the $\sigma$-algebra $\mathcal{B}$. 
It is the resolvent $\mathcal{U}$ which provides the topology, namely the fine topology (and its coarser natural or Ray topologies). 
However, there is no conflict between the two perspectives, and in fact, according to Theorem \ref{thm 4.6} from below, if the process has right-continuous paths w.r.t. some Lusin topology $\tau$ whose Borel $\sigma$-algebra is $\mathcal{B}$, then $\tau$ is necessary a natural topology.
Even more remarkable is the fact that a right process defined as above is stable under the change of the natural topology invoked in iii): if the process has right-continuous paths in some natural topology, then it has the same property in any natural topology.
It turns out that the regularity of the paths are stable at an even deeper level; we collect these results in \Cref{thm 4.6}, \Cref{coro:right_quasi_ray}, and \Cref{prop:quasi_left} below.
\end{rem}

The probabilistic description of the fine topology is given by the following key result, according to \cite{BlGe68}, Chapter II, Theorem 4.8, or \cite{Sh88}, Proposition 10.8 and Exercise 10.18, and \cite[Corollary A.10]{BeCiRo20}. 
The second assertion states  the relation between excessive functions and right-continuous supermartingales (cf. e.g.,  Proposition 1 from 
\cite{BeCi18a}, see also \cite{BeCi18}). 

\begin{thm} \label{thm 4.6} Let  $X$ be a right process and $f$ a universally $\mathcal{B}$-measurable function. 
Then the following assertions hold.

$(i)$ The function $f$ is finely continuous if and only if $(f(X(t)))_{t\geq 0}$ has $\mathbb{P}^{x}$-a.s. right continuous paths for all $x\in E$.
In particular, $X$ has a.s. right continuous paths in any natural topology on $E$.

$(ii)$ If $\beta >0$ then the  function $f$ is 
$\mathcal{U}_\beta$-excessive if and only if $(e^{-\beta t} f(X(t)))_{t\geq 0}$ is 
a right continuous 
$\mathcal{F}_t$-supermartingale w.r.t. $\mathbb{P}^{x}$ for all  $x\in E$.
\end{thm}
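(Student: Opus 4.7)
The plan is to establish (ii) first and then deduce (i) from it together with the Blumenthal 0-1 law. Statement (ii) decomposes into a supermartingale inequality and a right-continuity assertion, while (i) is at its core a translation, via the 0-1 law, between the regularity of level sets of $f$ (i.e.\ fine continuity) and pathwise right continuity of $f(X)$.

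For (ii), the forward direction rests on the semigroup form of $\mathcal{U}_\beta$-excessivity, namely $e^{-\beta t} P_t f \leq f$ for all $t \geq 0$, which one derives from $\alpha U_{\alpha+\beta} f \leq f$ via the Laplace identity
\[
\alpha U_{\alpha+\beta} f = \int_0^\infty \alpha e^{-\alpha s}\, e^{-\beta s} P_s f \, ds
\]
together with a standard Hille--Yosida type monotone approximation. Combined with the Markov property this yields
\[
\mathbb{E}^x\!\left[e^{-\beta(s+t)} f(X(s+t)) \,\big|\, \mathcal{F}_s\right] = e^{-\beta s}\bigl(e^{-\beta t} P_t f\bigr)(X(s)) \leq e^{-\beta s} f(X(s)),
\]
which is exactly the supermartingale inequality; the required right-continuity of $t \mapsto e^{-\beta t} f(X(t))$ is precisely part (ii) of Definition \ref{defi 4.4}. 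Conversely, from the supermartingale inequality at $s=0$ one reads off $e^{-\beta t} P_t f(x) \leq f(x)$, whence $\alpha U_{\alpha+\beta} f \leq f$ by integration against $\alpha e^{-\alpha t}\,dt$. The convergence $\alpha U_{\alpha+\beta} f \to f$ follows by rewriting $\alpha U_{\alpha+\beta} f(x) = \mathbb{E}^x \int_0^\infty \alpha e^{-\alpha s}\,e^{-\beta s} f(X(s))\, ds$ and using the $\mathbb{P}^x$-a.s.\ right continuity of $e^{-\beta s} f(X(s))$ at $s = 0$ in conjunction with the Laplace-concentration of $\alpha e^{-\alpha s}\,ds$ at $s = 0$ as $\alpha \to \infty$, dominated convergence on bounded truncations being used to pass to expectations.

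For (i), the key is the characterization of fine neighborhoods: a set $U \ni x$ is a fine neighborhood of $x$ if and only if $x$ is not regular for $E \setminus U$, equivalently $\mathbb{P}^x\{T_{E \setminus U} > 0\} = 1$; the dichotomy that this probability is either $0$ or $1$ is the Blumenthal 0-1 law, available since $X$ is a right process. Assume $f$ finely continuous; for every $\varepsilon > 0$ the set $\{|f - f(x)| < \varepsilon\}$ is a fine neighborhood of $x$, so $\mathbb{P}^x$-a.s.\ one has $|f(X(t)) - f(x)| < \varepsilon$ for all sufficiently small $t > 0$, and diagonalizing over $\varepsilon_n \downarrow 0$ yields $\mathbb{P}^x$-a.s.\ right continuity of $f(X)$ at $t = 0$. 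Applying the Markov property at an arbitrary time $s$ propagates this to right continuity at every fixed $s$; the upgrade to a single $\mathbb{P}^x$-null set valid simultaneously for all $s \geq 0$ uses the right-continuity of the paths of $X$ in some natural topology (Definition \ref{defi 4.4}(iii)) to restrict to a countable dense set of times. The converse runs the same chain backwards: $\mathbb{P}^x$-a.s.\ right continuity of $f(X)$ at $0$ forces $\mathbb{P}^x\{T_{\{|f-f(x)| \geq \varepsilon\}} > 0\} = 1$, so $x$ is not regular for $\{|f-f(x)| \geq \varepsilon\}$ and fine continuity follows. The \emph{in particular} assertion is then immediate, since any natural topology is coarser than the fine topology by Definition \ref{defi:natual_topology}, so $\tau$-continuous functions are finely continuous and the preceding conclusion applies.

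The main technical obstacle is the passage from fixed-time right continuity (granted by the 0-1 law and the Markov property) to simultaneous right continuity over all $t \geq 0$: this step requires a careful joint null-set argument exploiting both the Markov property and the already-guaranteed right continuity of $X$ itself in some natural topology, and cannot be short-circuited purely by potential-theoretic means.
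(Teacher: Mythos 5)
The paper does not actually prove \Cref{thm 4.6}; it recalls it as a classical result with references to \cite{BlGe68} (Ch.\ II, Theorem 4.8), \cite{Sh88} (Proposition 10.8, Exercise 10.18) and \cite{BeCiRo20}, so your attempt has to be judged on its own. Your treatment of (ii) is essentially sound and follows the standard route: the only detail you gloss over is why $\alpha U_{\alpha+\beta}f\leq f$ together with $\sup_\alpha\alpha U_{\alpha+\beta}f=f$ yields $e^{-\beta t}P_tf\leq f$; the clean way is to note via the resolvent equation that $\alpha U_{\alpha+\beta}f=U_\beta\bigl(\alpha(f-\alpha U_{\alpha+\beta}f)\bigr)$ is a potential of a nonnegative function, hence semigroup-$\beta$-supermedian, and to pass to the increasing limit. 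The right-continuity half of (ii) is indeed just \Cref{defi 4.4}(ii), and your converse (integration against $\alpha e^{-\alpha t}\,dt$ plus Fatou/truncation at $t=0$) is fine.

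Part (i) contains a genuine gap in the hard direction (fine continuity $\Rightarrow$ a.s.\ right continuity of $f(X)$ at \emph{all} times). Your mechanism for upgrading from right continuity at each fixed $s$ to a single null set valid for all $s$ --- ``restrict to a countable dense set of times using the right continuity of $X$ in some natural topology'' --- does not work, for two reasons. First, right continuity of a function of $t$ at every rational time does not imply right continuity at every real time (consider an indicator of a single irrational time), so checking a countable dense set of times proves nothing. Second, the right continuity of $X$ itself in a natural topology $\tau$ gives no control on $f(X)$, because a finely continuous $f$ is in general \emph{not} $\tau$-continuous --- the fine topology is strictly finer than every natural topology, and this discrepancy is precisely what makes the theorem nontrivial. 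The correct argument must use the \emph{strong} Markov property: one shows that for every stopping time $T$, $f(X_{T+t})\to f(X_T)$ a.s.\ on $[T<\infty]$ (fine continuity at the random point $X_T$ plus the zero-one law applied after $T$), and then concludes either by the optional section theorem applied to the optional set where right continuity fails, or by iterating the exit times $R_{n+1}=R_n+R\circ\theta_{R_n}$ of the sets $[\,|f-f(X_{R_n})|<\varepsilon\,]$ and proving $\sup_nR_n=\infty$. A secondary issue: you invoke the characterization ``$U$ is a fine neighborhood of $x$ iff $x$ is irregular for $E\setminus U$'' as if it were the definition, but the paper defines the fine topology analytically (\Cref{defi: fine_topology}, as the coarsest topology making $\mathcal{U}_q$-excessive functions continuous); the equivalence of that definition with the thinness characterization is itself a nontrivial theorem (it relies on (ii), Doob's right-continuity of supermartingales and the zero-one law), so as written your proof of (i) rests on an unestablished foundation rather than only on the paper's definitions.
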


We end this paragraph by recalling the analytic and probabilistic descriptions of {\it polar} sets.
\begin{defi}
\begin{enumerate}[(i)]
\item If $u\in \mathcal{E}(\mathcal{U}_\alpha)$ and $A \in \mathcal{B}$, then the $\alpha$-order reduced function of $u$ on $A$ is given by
$$
R_\alpha^A u = \inf \{ v \in \mathcal{E}(\mathcal{U}_\alpha): \, v\geqslant u \mbox{ on } A \}.
$$
$R_\alpha^A u$ is merely 
$\mathcal{U}_\alpha$--supermedian  and we denote by $B_\alpha^A u :=\widehat{R_\alpha^A u}$ its 
$\mathcal{U}_\alpha$--excessive regularization, called the {\it balayage} of $u$ on $A$; 
however, if $A$ is finely open then $B_\alpha^A u=R_\alpha^A u\in\mathcal{E}(\mathcal{U_\alpha})$.

\item A set $A\in \mathcal{B}$ is called polar if $B_\alpha^A 1\equiv 0$.
\end{enumerate}
\end{defi}

\begin{rem} \label{rem:polar_characterization}
\begin{enumerate}
    \item[(i)] It is well-known that if $V\in \mathcal{E}(\mathcal{U}_\alpha)$ for some $\alpha\geq 0$ such that $U_1\left(1_{[V=\infty]}\right)\equiv 0$, then the set $[V=\infty]$ is polar.
    \item[(ii)] We have the following probabilistic characterization due to G.A. Hunt holds (see e.g. \cite{DeMe78}): If $X$ is a right process, then for all $u\in \mathcal{E}(\mathcal{U}_\alpha)$ and $A\in\mathcal{B}$
    \begin{equation}
        B_\alpha^A u=\mathbb{E}^x\{e^{-\alpha T_A} u (X(T_A))\},   
    \end{equation}
    where $T_A:=\inf \{ t>0 :  X(t)\in A \}$.
    In particular, $A$ is polar if and only if $\mathbb{P}^{x}(T_A<\infty)=0$ for all $x\in E$.
\end{enumerate}
\end{rem}

\paragraph{Existence of a right process on a larger space.}

Condition $\mathbf{(H_{\mathcal{C}})}$, although necessary, is not sufficient to guarantee the existence of a right process with the given resolvent $\mathcal{U}$, even if the resolvent is strong Feller; see \cite{BeCiRo24}. 
However, under $\mathbf{(H_{\mathcal{C}})}$, there is always a larger space on which an associated right process exists.

We denote by $Exc(\mathcal{U}_\beta)$ the set of all $\mathcal{U}_\beta$-excessive measures:
$\xi \in Exc(\mathcal{U}_\beta)$ if and only if $\xi$ is a $\sigma$-finite measure on $E$ and $\xi \circ \alpha U_{\alpha+\beta} \leq \xi$ for all $\alpha >0$.

\begin{defi}[The saturation of $E$ w.r.t. $\mathcal{U}_\beta$] \label{defi:saturation}
Let $\beta >0$.
\begin{enumerate}[(i)]
\item The {\it energy functional} associated with $\mathcal{U}_\beta$ is $L^{\beta}: Exc(\mathcal{U}_\beta)\times \mathcal{E}(\mathcal{U}_\beta) \rightarrow \overline{\mathbb{R}}_+$ given by
\begin{align}
L^{\beta}(\xi,v):&=\sup\{\mu(v) \; : \; \mu \mbox{ is a } \sigma\mbox{- finite measure, } \mu \circ U_\beta \leq \xi\}\\
&=\sup\{\xi(f) \; : \; f\geq 0 \mbox{ is a } \mathcal{B}\mbox{-measurable function such that } U_\beta f \leq v\}.
\end{align}
\item The {\it saturation} of $E$ (with respect to $\mathcal{U}_\beta$) is the set $E_1$ of all extreme points of the set $\{\xi \in Exc(\mathcal{U}_\beta)\; : \; L^{\beta}(\xi,1)=1\}$.
\item The map $E\ni x \mapsto \delta_x \circ U_\beta \in Exc(\mathcal{U}_\beta)$ is a measurable embedding of $E$ into $E_1$ and 
every $\mathcal{U}_\beta$-excessive function $v$ has an extension $\widetilde{v}$ to $E_1$, defined as $\widetilde{v}(\xi):=L^{\beta}(\xi,v)$.
The set $E_1$ is endowed with the $\sigma$-algebra $\mathcal{B}_1$ generated by the family $\{\widetilde{v}: \; v\in \mathcal{E}(\mathcal{U}_\beta)\}$.
In addition, as in \cite[Sections 1.1 and 1.2]{BeBoRo06}, there exists a unique resolvent of kernels $\mathcal{U}^{1}=(U^{1}_\alpha)_{\alpha>0}$ on $(E_1, \mathcal{B}_1)$ 
which is an {\it extension} of $\mathcal{U}$ in the sense that
\begin{enumerate}
    \item[(iii.1)] $U^1{1_{E_1\setminus E}}=0$ on $E_1$,
    \item[(iii.2)] $(U^1f)|_E=U(f|_E)$ for all $f \in b\mathcal{B}_1$.
\end{enumerate}
More precisely, it is given by
\begin{align} \label{eq 4.1}
U^{1}_\alpha f(\xi)&=L^{\beta}(\xi, U_\alpha (f|_E))\\
&=L^{\beta}(\xi,U_\beta(f|_E+(\beta-\alpha)U_\alpha))=\xi(f+(\beta-\alpha)U_\alpha f) 
\mbox{ for all } f \in bp\mathcal{B}_1, \xi\in E_1, \alpha >0.
\end{align}
\end{enumerate}
\end{defi}

\begin{rem}\label{rem:saturation}
Note that $(E_1,\mathcal{B}_1)$ is a Lusin measurable space, the map $x \mapsto\delta_x \circ U_\beta$ identifies $E$ with a subset of $E_1$, $E\in \mathcal{B}_1$,  and $\mathcal{B}=\mathcal{B}_1|_E$. 
Furthermore, $E$ is dense in $E_1$ with respect to the fine topology on $E_1$ associated with $\mathcal{U}^1$. 
Hence, if $\alpha>0$ and $v$ is a $\mathcal{U}_\alpha$-excessive function then its $\mathcal{U}^1_\alpha$-excessive extension $\tilde{v}$ given by \Cref{defi:saturation} is the unique extension of $v$ from $E$ to $E_1$ by fine continuity.
\end{rem}

The existence of a right process on a larger space, more precisely on $E_1$, is given by the following result, for which we refer to \cite[(2.3)]{BeRo11a}, in \cite[Sections 1.7 and 1.8]{BeBo04a},   \cite[Theorem 1.3]{BeBoRo06}, and \cite[Section 3]{BeBoRo06a}; see also
\cite{St89}.

\begin{thm} \label{thm 4.15}
There is always a right process $X^1$ on the saturation $(E_1,\mathcal{B}_1)$, associated with $\mathcal{U}^{1}$. Moreover, the following assertions are equivalent:
\begin{enumerate}[(i)]
\item There exists a right process on $E$ associated with $\mathcal{U}$.
\item The set $E_1\setminus E$ is polar (w.r.t. $\mathcal{U}^1$).
\end{enumerate}
\end{thm}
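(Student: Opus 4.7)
The plan is to separate the statement into two independent parts: the construction of the right process $X^1$ on $(E_1,\mathcal{B}_1)$, which is the substantive step, and the equivalence of (i) and (ii), which becomes comparatively formal once $X^1$ is available. Throughout, I would use that condition $\mathbf{(H_{\mathcal{C}})}$ is in force by \Cref{prop 2.5} and supplies, via \Cref{rem: ray_topology}, a Ray cone $\mathcal{R}$ associated with $\mathcal{U}_\beta$.

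For the existence of $X^1$, my approach is the Ray--Knight compactification. The Ray cone $\mathcal{R}$ generates a metrizable Lusin topology $\tau_\mathcal{R}$ that embeds $E$ into a compact metric space $\overline{E}$ on which the elements of $\mathcal{R}$ extend continuously. One then checks that $\mathcal{U}$ extends to a sub-Markovian Ray resolvent $\overline{\mathcal{U}}$ on $\overline{E}$ and invokes Ray's classical theorem to produce a càdlàg strong Markov process $\overline{X}$ on $\overline{E}$ with resolvent $\overline{\mathcal{U}}$. The crucial step is the elimination of branch points of $\overline{E}$: one verifies, using the energy-functional formula \eqref{eq 4.1} defining $\mathcal{U}^1$, that the non-branching points of $\overline{E}$ are in bijection with the extremal $\mathcal{U}_\beta$-excessive measures of total mass one, i.e.\ with $E_1$. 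Because $\overline{X}$ leaves the branching set instantaneously and $(E_1,\mathcal{B}_1)$ inherits a Lusin measurable structure (cf.\ \Cref{rem:saturation}), the restriction $X^1$ of $\overline{X}$ to $E_1$ is a right process in the sense of \Cref{defi 4.4} whose resolvent coincides with $\mathcal{U}^1$.

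The equivalence is then handled as follows. For (ii) $\Rightarrow$ (i), if $E_1\setminus E$ is polar for $\mathcal{U}^1$, then by the Hunt formula in \Cref{rem:polar_characterization}(ii) we have $\mathbb{P}^{x}(T_{E_1\setminus E}<\infty)=0$ for every $x\in E$, so the paths of $X^1$ initiated in $E$ remain in $E$ almost surely; property (iii.2) of \Cref{defi:saturation} then gives that the restriction of $X^1$ to $E$ is a right process on $(E,\mathcal{B})$ with resolvent $\mathcal{U}$. For (i) $\Rightarrow$ (ii), given a right process $X$ on $E$ with resolvent $\mathcal{U}$, we view $X$ as $E_1$-valued through the measurable embedding $x\mapsto\delta_x\circ U_\beta$. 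Property (iii.2) of \Cref{defi:saturation} shows that its resolvent, evaluated at starting points in $E$, agrees with $\mathcal{U}^1$, and the uniqueness of a right process associated with a given resolvent and a given initial law forces the embedded process to agree in law with $X^1$ for every starting point $x\in E$. Consequently $B_{\alpha}^{E_1\setminus E}1$ vanishes on $E$; being $\mathcal{U}^1_\alpha$-excessive and therefore finely continuous on $E_1$, and since $E$ is finely dense in $E_1$ by \Cref{rem:saturation}, it must vanish identically on $E_1$, which is precisely the polarity of $E_1\setminus E$.

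The main obstacle is the identification carried out in the construction step: matching the non-branching points of the Ray compactification $\overline{E}$ with the saturation $E_1$ defined purely in analytic terms via the energy functional $L^\beta$. Bridging the probabilistic/Ray-topological and the measure-extremal descriptions is the delicate point; once this is done, the reduction of polarity to fine density and the uniqueness of the Markov property under a fixed resolvent are routine manipulations with excessive functions and balayages.
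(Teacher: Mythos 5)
The paper does not actually prove \Cref{thm 4.15}; it imports it from the cited references (\cite{BeRo11a}, \cite{BeBo04a}, \cite{BeBoRo06}, \cite{BeBoRo06a}, \cite{St89}), so the comparison is really against those. Your route --- Ray--Knight compactification for the existence of $X^1$, then restriction/uniqueness arguments for the equivalence --- is essentially the route taken in \cite{BeBo04a} and \cite{BeRo11a} (Steffens \cite{St89} works more intrinsically with the cone of excessive measures, avoiding the compactification). Your treatment of the equivalence is sound: (ii)$\Rightarrow$(i) by restricting $X^1$ to $E$ using Hunt's formula from \Cref{rem:polar_characterization}(ii) and property (iii.2) of \Cref{defi:saturation}, and (i)$\Rightarrow$(ii) by identifying the law of the embedded $X$ with that of $X^1$ started in $E$, concluding that $B_\alpha^{E_1\setminus E}1$ vanishes on $E$, and then propagating this to all of $E_1$ by fine continuity of the balayage and the fine density of $E$ in $E_1$ (\Cref{rem:saturation}).

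The one concrete flaw is in the identification you yourself flag as the delicate point: the non-branch points of $\overline{E}$ are \emph{not} in bijection with $E_1$. A non-branch point $y$ yields a measure $\delta_y\circ\overline{U}_\beta$ on $\overline{E}$, and this defines an element of $Exc(\mathcal{U}_\beta)$ (a measure on $E$, normalized by $L^\beta(\xi,1)=1$) only when $\overline{U}_\beta(y,\overline{E}\setminus E)=0$; the compactification generically contains non-branch points violating this (as well as degenerate points with $\alpha\overline{U}_\alpha(y,1)<1$). The correct identification is between $E_1$ and the set of non-degenerate, non-branch points whose $\beta$-potential kernel is carried by $E$, and this is forced on you by property (iii.1) of \Cref{defi:saturation}, namely $U^1 1_{E_1\setminus E}=0$: if you kept all non-branch points, the restricted process would not have resolvent $\mathcal{U}^1$. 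Correspondingly, ``$\overline{X}$ leaves the branching set instantaneously'' is not enough; you must also show that, started in this smaller set, $\overline{X}$ a.s.\ never visits the remaining non-branch points outside it (not merely that it spends Lebesgue-null time there), which is where the actual work in \cite{BeBo04a}, Sections 1.7--1.8, lies. With that identification and absorption property supplied, the rest of your argument goes through.
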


\paragraph{Natural extensions and saturated processes.} 
This paragraph is aimed to offer some qualitative insights on the saturation $E_1$ introduced above, from a slightly different perspective:
Given a right process $X$ on $E$ with resolvent $\mathcal{U}$, an interesting question that can be raised is whether this process can be extended to a larger space, in a natural way.
Such an extension can be defined as follows:

\begin{defi} \label{defi:saturated}
Let $(X, \mathbb{P}^x, x\in E)$ be a right process on $E$.
\begin{enumerate}[(i)]
    \item We say that a second right process $(\widetilde{X}, \widetilde{\mathbb{P}}^x, x\in \widetilde{E)}$ on $\widetilde{E}$ is a natural extension of $X$ if $E\in \mathcal{B}(\widetilde{E})$ and
\begin{enumerate}
    \item[(i.1)] $\widetilde{\mathbb{P}}^x(\widetilde{X}_t\in \widetilde{E}\setminus E)=0$ for all $t> 0$, $x\in \widetilde{E}$,
    \item[(i.2)] If $x\in E$, then $\widetilde{\mathbb{P}}^x \circ \widetilde{X}_t^{-1}=\mathbb{P}^x \circ X_t^{-1}$ for all $t\geq 0$.
\end{enumerate}
\item We say that $X$ is saturated if there is no proper natural extension of it.
\end{enumerate}
\end{defi}

The following result can be easily deduced from e.g. \cite{BeCiRo20}:

\begin{thm}\label{thm:saturation}
Let $X$ be a right process on $E$.
The following assertions hold:
\begin{enumerate}[(i)]
    \item The right process $X^1$ on the saturation $E^1$ given by Theorem \ref{thm 4.15} is maximal, in the sense that $X^1$ is a natural extension of any natural extension $\widetilde{X}$ of $X$.
    In particular, for any such natural extension $\widetilde{X}$ on $\widetilde{E}$, we have that $\widetilde{E}\setminus E$ is polar.
    \item The process $X$ is saturated if and only if $E=E^1$.
\end{enumerate}
\end{thm}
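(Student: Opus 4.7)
The plan is, for (i), to construct a canonical injection $\iota\colon \widetilde{E}\hookrightarrow E_1$ that identifies any given natural extension $\widetilde{X}$ with a sub-process of $X^1$, and then to lift the polarity of $E_1\setminus E$ (which \Cref{thm 4.15} provides because $X$ itself is a right process on $E$) back to $\widetilde{E}\setminus E$; part (ii) then follows from a short set-theoretic argument.

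For the embedding, assign to each $x\in \widetilde{E}$ the measure $\iota(x):=\delta_x\circ \widetilde{U}_\beta$. Condition (i.1) of \Cref{defi:saturated} together with Fubini gives $\widetilde{U}_\beta(x,\widetilde{E}\setminus E)=0$, so $\iota(x)$ is a $\sigma$-finite measure on $(E,\mathcal{B})$. The matching condition (i.2) yields $\widetilde{U}_\alpha(x,\cdot)|_E=U_\alpha(x,\cdot)$ for $x\in E$; combined with the resolvent equation for $\widetilde{\mathcal{U}}$ this shows $\iota(x)\in Exc(\mathcal{U}_\beta)$ and $L^\beta(\iota(x),1)=1$. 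The decisive point is extremality of $\iota(x)$ within $\{\xi\in Exc(\mathcal{U}_\beta):L^\beta(\xi,1)=1\}$, so that $\iota(x)\in E_1$: this reflects the fact that $\widetilde{\mathbb{P}}^x$ is a single Markov law rather than a non-trivial mixture, and is the potential-theoretic content borrowed from \cite{BeCiRo20}. Measurability of $\iota$ is immediate from the definition of $\mathcal{B}_1$ via the extensions $\widetilde{v}(\xi)=L^\beta(\xi,v)$, and $\iota$ is injective because on the Lusin space $\widetilde{E}$ the fine topology of $\widetilde{\mathcal{U}}$ separates points, so distinct points yield distinct measures $\delta_x\circ\widetilde{U}_\beta$. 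By the uniqueness characterization of $\mathcal{U}^1$ in \Cref{defi:saturation}, we obtain $(U^1_\alpha f)(\iota(x))=\widetilde{U}_\alpha f(x)$ for every $f\in b\mathcal{B}_1$, hence $X^1$ and $\widetilde{X}$ share all one-dimensional marginals under the identification $\widetilde{E}\subset E_1$, proving that $X^1$ is a natural extension of $\widetilde{X}$. The \emph{in particular} statement follows because \Cref{thm 4.15} applied to $X$ gives $E_1\setminus E$ polar for $\mathcal{U}^1$, so $\iota(\widetilde{E})\setminus E\subset E_1\setminus E$ is polar, and by the probabilistic characterization of polarity in \Cref{rem:polar_characterization}(ii) together with the equality of transition laws of $\widetilde{X}$ and $X^1|_{\iota(\widetilde{E})}$, the set $\widetilde{E}\setminus E$ is polar for $\widetilde{\mathcal{U}}$ as well.

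For (ii), the forward implication is immediate: saturation of $X$ forbids the natural extension $X^1$ from being proper, which forces $E_1=E$. Conversely, if $E_1=E$ then for any natural extension $\widetilde{X}$ on $\widetilde{E}$ the embedding $\iota$ from (i) sends $\widetilde{E}$ into $E_1=E$; since $\iota$ restricted to $E$ is the canonical identification $y\mapsto \delta_y\circ U_\beta$ (again by (i.2)), injectivity of $\iota$ forces $\widetilde{E}\setminus E=\emptyset$, so no proper extension exists and $X$ is saturated. The main obstacle is the extremality of $\iota(x)$ for $x\in\widetilde{E}\setminus E$, which is where all the potential-theoretic content of the saturation construction of \cite{BeBoRo06,BeCiRo20} is concentrated; once granted, the rest is routine bookkeeping with the resolvent equation and the probabilistic characterization of polarity.
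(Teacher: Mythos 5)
The paper does not actually prove this theorem: it is stated with only the remark that it ``can be easily deduced from \cite{BeCiRo20}'', so there is no in-text argument to compare yours against. Your proposal supplies a genuine argument where the paper supplies a citation, and its architecture is sound: the embedding $\iota(x)=\delta_x\circ\widetilde{U}_\beta|_E$ is the right object, condition (i.1) of \Cref{defi:saturated} plus Fubini gives $\widetilde{U}_\beta(x,\widetilde{E}\setminus E)=0$, condition (i.2) gives $\widetilde{U}_\alpha f|_E=U_\alpha(f|_E)$, and from these the excessivity of $\iota(x)$, the normalization $L^\beta(\iota(x),1)=1$, the identity $U^1_\alpha f(\iota(x))=\widetilde{U}_\alpha f(x)$, and the two implications in (ii) all follow as you indicate.

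The one step you assert rather than prove is the extremality of $\iota(x)$ in $\{\xi\in Exc(\mathcal{U}_\beta):L^\beta(\xi,1)=1\}$, and the heuristic that $\widetilde{\mathbb{P}}^x$ is ``a single law rather than a mixture'' is not an argument. It can be closed cheaply: restriction $\eta\mapsto\eta|_E$ is a bijection from $Exc(\widetilde{\mathcal{U}}_\beta)$ onto $Exc(\mathcal{U}_\beta)$ preserving the energy functional, because no $\widetilde{\mathcal{U}}_\beta$-excessive measure charges $\widetilde{E}\setminus E$ (use $\eta=\sup_\alpha\alpha\,\eta\circ\widetilde{U}_{\alpha+\beta}$ and $\widetilde{U}_{\alpha+\beta}1_{\widetilde{E}\setminus E}=0$). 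This identifies the saturation of $\widetilde{E}$ with $E_1$ and carries the canonical embedding $\widetilde{E}\ni x\mapsto\delta_x\circ\widetilde{U}_\beta$ of \Cref{defi:saturation}, applied now to the right process $\widetilde{X}$, onto your $\iota$; extremality of $\iota(x)$ is then precisely the standard fact that $\widetilde{E}$ embeds into its own saturation. Two further points are glossed over. First, transferring polarity of $E_1\setminus E$ (which \Cref{thm 4.15} does provide, since $X$ is a right process on $E$) to polarity of $\widetilde{E}\setminus E$ with respect to $\widetilde{\mathcal{U}}$ does not follow from equality of one-dimensional marginals alone; it follows from matching the reduite operators, which the identification of excessive functions/measures above delivers. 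Second, the forward implication of (ii) tacitly uses that $X^1$ is itself a natural extension of $X$, which needs the small upgrade from $U^1 1_{E_1\setminus E}=0$ to $P^1_t 1_{E_1\setminus E}=0$ for every $t>0$ via the semigroup property. With these insertions your proof is complete and is, as far as one can tell, the argument the authors intend the citation to stand for.
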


\begin{rem}
\Cref{thm:saturation} has two immediate consequences:
\begin{enumerate}
    \item[(i)] The saturation property depends only on the resolvent $\mathcal{U}$ (or the transition semigroup) and the underlying state space $E$. 
    In particular, if $X$ is a saturated right process, then so is any other right process sharing the same resolvent.
    \item[(ii)] One can construct many examples of right processes which are not saturated, simply as follows: If $X$ is a right process and $A$ is a polar set, then the restriction $X^A$ is not saturated. Perhaps the simplest example is the uniform motion to the right on $E=(0,\infty)$ which is not saturated, as one can easily see that its saturation is $E_1=[0,\infty)$. Another obvious example is obtained by taking $E:=\mathbb{R}^2\setminus\{0\}$ and $X$ to be the restriction to $E$ of the Brownian motion defined on $\mathbb{R}^2$; since $\{0\}$ is polar, we get that $X$ is not saturated on $E$. However, we shall see later on that the Brownian motion defined on $\mathbb{R}^2$ is saturated, as well as, e.g., any Hunt process on a locally compact metric space whose semigroup is Feller (in the strong sense), i.e. it is a $C_0$-semigroup on the space of continuous functions vanishing at infinity. 
    In fact, the saturation property can be deduced in much more general settings by the existence of a {\rm Lyapunov function vanishing at infinity}, see \Cref{thm:main_construction}, (iii), and \Cref{coro:lccb} below.
\end{enumerate}
\end{rem}

The saturation property of a right process has an interesting analytic regularity counterpart, which requires some context: Define the weak generator $({\sf L}, D_b({\sf L}))$ by
\begin{equation}\label{eq:D_b_L}
    D_b({\sf L}):=\{ U_\alpha f : f\in \mathcal{B}_b\}, \quad {\sf L}(U_\alpha f)= \alpha U_\alpha f -f, \quad f\in \mathcal{B}_b, \quad \mbox{for some } \alpha >0,
\end{equation}
and note that $({\sf L}, D_b({\sf L}))$ does not depend on $\alpha >0$.
It is easy to see that a $\sigma$-finite measure $\xi$ on $(E,\mathcal{B})$ is $\mathcal{U}_\beta$-excessive, i.e. $\xi\in Exc(\mathcal{U}_\beta)$ if and only if
\begin{equation}\label{eq:m_excessive}
    (L^\ast-\beta)\xi\leq 0 \mbox{ in the weak sense }, \mbox{ i.e. } \xi\left((L-\beta)f\right)\leq 0 \quad \mbox{ for all } f\in D_b({\sf L})\cap L^1(m).
\end{equation}

\begin{thm}[Analytic regularity of saturated processes]
Let $X$ be a right process on $E$ with resolvent $\mathcal{U}$. 
Then $X$ is saturated if and only if for any $\beta>0$ and any finite measure $\xi$ which satisfies \eqref{eq:m_excessive}, i.e. $\xi \in Exc(\mathcal{U}_\beta)$, there exists a $\sigma$-finite positive measure $\mu$ on $E$ such that $(L^\ast -\beta)\xi=-\mu$ in the weak sense, i.e. $\xi=\mu\circ U_\beta$.
\end{thm}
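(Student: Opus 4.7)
The plan is to reduce the analytic characterization to the measure-theoretic identity provided by \Cref{thm:saturation}(ii): $X$ is saturated if and only if $E=E_1$ under the embedding $\iota\colon E\to E_1$, $\iota(x)=\delta_x\circ U_\beta$. The equivalence between $({\sf L}^\ast-\beta)\xi=-\mu$ in the weak sense and $\xi=\mu\circ U_\beta$ is immediate from \eqref{eq:D_b_L}: testing the weak identity against $f=U_\beta g$, $g\in b\mathcal{B}$, and using $({\sf L}-\beta)U_\beta g=-g$, yields $\xi(g)=(\mu\circ U_\beta)(g)$ for all such $g$. Thus it suffices to identify ``$E=E_1$'' with ``every finite $\xi\in Exc(\mathcal{U}_\beta)$ is of the form $\mu\circ U_\beta$ for some $\sigma$-finite $\mu\ge 0$ on $E$''.

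For the ``only if'' direction (saturation $\Rightarrow$ potential representation), I would invoke the Choquet-type integral representation of normalized $\mathcal{U}_\beta$-excessive measures as barycenters supported on their extreme points $E_1$; this structural result is central to the construction of the saturation, cf.\ \cite[Sections~1.7--1.8]{BeBo04a} and \cite{BeBoRo06}. Given a finite $\xi\in Exc(\mathcal{U}_\beta)$, I would first check $c:=L^\beta(\xi,1)<\infty$ (using $\xi(E)<\infty$ together with a strictly positive bounded potential from $\mathcal{E}(\mathcal{U}_\beta)$), then normalize and write $\xi/c=\int_{E_1}\zeta\,\nu(d\zeta)$ for some probability $\nu$ on $(E_1,\mathcal{B}_1)$. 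Under saturation $\iota\colon E\to E_1$ is a bimeasurable bijection, so the pushforward $\nu':=\iota^{-1}_{\ast}\nu$ is a probability on $E$; since $\zeta=\iota(x)=\delta_x\circ U_\beta$ satisfies $\zeta(f)=U_\beta f(x)$, Fubini gives $\xi=c\,\nu'\circ U_\beta$, i.e.\ $\mu:=c\,\nu'$ is the required $\sigma$-finite representing measure.

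For the ``if'' direction I would take any $\zeta\in E_1$ and show $\zeta\in\iota(E)$. First, $\zeta$ is finite: testing the definition $L^\beta(\zeta,1)=\sup\{\zeta(f):U_\beta f\le 1\}=1$ against $f\equiv\beta$ (using sub-Markovianity $\beta U_\beta 1\le 1$) yields $\beta\,\zeta(E)\le 1$. By hypothesis, $\zeta=\mu\circ U_\beta$ for some $\sigma$-finite $\mu\ge 0$. Any decomposition $\mu=\mu_1+\mu_2$ into $\sigma$-finite positive parts induces $\zeta=\mu_1\circ U_\beta+\mu_2\circ U_\beta$, with both summands in $Exc(\mathcal{U}_\beta)$; the additivity of $L^\beta(\cdot,1)$ in its first argument and the extremality of $\zeta$ in the normalized excessive set force $\mu_i\circ U_\beta=c_i\zeta$ with $c_1+c_2=1$. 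The injectivity of $\mu\mapsto\mu\circ U_\beta$ on $\sigma$-finite positive measures, which follows from $\mathbf{(H_{\mathcal{C}})}$ via a monotone-class argument against a countable separating family $\{U_\beta f_n\}$, then yields $\mu_i=c_i\mu$. Since every non-trivial decomposition of $\mu$ is proportional, $\mu$ is extreme in the cone of positive $\sigma$-finite measures on $E$, hence $\mu=c\,\delta_y$ for some $y\in E$, $c>0$; the normalization $L^\beta(\zeta,1)=c=1$ finally gives $\zeta=\delta_y\circ U_\beta=\iota(y)$.

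The main obstacle is the Choquet integral representation invoked in the ``only if'' direction: making it fully rigorous requires the measurable structure on $(E_1,\mathcal{B}_1)$ from \Cref{defi:saturation} and a disintegration argument ensuring that the representing measure $\nu$ pulls back along $\iota$ to a genuine $\sigma$-finite measure on $(E,\mathcal{B})$. The converse direction is technically lighter, but it crucially depends on the injectivity of the potential map on $\sigma$-finite positive measures, which is where the Lusin measurable structure and $\mathbf{(H_{\mathcal{C}})}$ enter.
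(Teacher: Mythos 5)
Your proposal is correct and follows essentially the same route as the paper, whose proof is a one-line reduction to \Cref{thm:saturation} together with the representation theory of finite excessive measures as potentials of measures carried by the saturation (cited there as \cite[Proposition 1.5.11]{BeBo04a}). Your argument simply unpacks the content of that cited proposition --- the Choquet-type representation for the ``only if'' direction and the extremality-forces-Dirac argument for the ``if'' direction --- while delegating the same structural facts (integral representation over $E_1$, additivity of the energy functional, uniqueness of charges) to the same source, so there is no gap relative to the paper's own treatment.
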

\begin{proof}
It is an immediate consequence of \Cref{thm:saturation} and \cite[Proposition 1.5.11]{BeBo04a}.
\end{proof}

\paragraph{Hunt processes.} 
In this paragraph we assume that $X$ is a right process on $E$ which is conservative, i.e. has $\mathbb{P}^x$-a.s. infinite lifetime, for all $x\in E$.

\begin{defi} \label{defi:Hunt}
Let $\tau$ be a topology on $E$ such that $\sigma(\tau)=\mathcal{B}$.
We say that $X$ is a quasi-left continuous with respect to $\tau$ if for every $\mu\in \mathcal{P}(E)$ and any sequence of $\mathcal{F}_t$-stopping times $(T_n)_{n\geq 1}$ such that $T_n \nearrow_n T$, we have that
\begin{equation} \label{eq:hunt}
 \tau-\lim_{n}X_{T_n} = X_T \mbox{ on } [T<\infty] \quad \mathbb{P}^\mu\mbox{-a.s.}
\end{equation}
If, in addition to quasi-left continuity, $X$ has also left $\tau$-limits in $E$ $\mathbb{P}^\mu$-a.s. for all $\mu\in \mathcal{P}(E)$, we say that $X$ is a Hunt process with respect to $\tau$.
\end{defi}

\begin{rem}
Suppose that $X$ is a Hunt process w.r.t. $\tau$, and let $d_\tau$ be a metric which generates $\tau$. 
Because $H:=\left[ \limsup\limits_n{d_{\tau}(X_{T_n},X_T)1_{[T<\infty]}}>0\right]\in \mathcal{F}$, we have that $\mathbb{P}^\cdot(H)$ is $\mathcal{B}^u$-measurable, hence \eqref{eq:hunt} holds $P^\mu$-a.s. for all probabilities $\mu$ on $E$. 
Moreover, it remains valid also for all $\mathcal{F}_t^\mu$-stopping times $T_n \nearrow_n T$; see e.g. \cite[Theorem 47.6]{Sh88},  or \cite[Theorem A.2.1]{FuOsTa11}.
\end{rem}

\begin{defi}\label{defi: U-nest}
An increasing sequence of $\mathcal{B}$-measurable sets $(F_n)_{n\geq 1}$ is called a $\mathcal{U}$-nest if there exists $\alpha>0$ and a strictly positive and bounded function $u\in \mathcal{E}_\alpha$ such that
\begin{equation} \label{eq:nest1}
    \lim\limits_n B_\alpha^{F_n^c}u =0 \quad \mbox{ on } E.
\end{equation}
\end{defi}

\begin{rem}\label{rem:prob_nest}
Recall that if $X$ is a right process with resolvent $\mathcal{U}$, then 
\begin{equation} \label{eq:nest2}
(F_n)_{n\geq 1} \mbox{ is a $\mathcal{U}$-nest if and only if } T_{F_n^c}\mathop{\nearrow}\limits_n\infty \quad \mathbb{P}^\mu \mbox{-a.s. for all } \mu\in \mathcal{P}(E).
\end{equation}    
\end{rem}

\begin{prop}
If $(F_n)_{n\geq 1}$ is a $\mathcal{U}$-nest and $E_0:=\bigcup\limits_n F_n$, then $E\setminus E_0$ is a polar set.
\end{prop}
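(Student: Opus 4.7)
The plan is to combine the monotonicity of balayage in the set variable with the strict positivity of $u$, and then invoke countable stability of polarity to conclude. Set $A := E \setminus E_0 = \bigcap_n F_n^c$ and let $u \in \mathcal{E}(\mathcal{U}_\alpha)$ be the strictly positive and bounded function given by \Cref{defi: U-nest}. The target is $B_\alpha^A 1 \equiv 0$.

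First I would observe that, since $(F_n)_{n\geq 1}$ is increasing, $A \subset F_n^c$ for every $n$. By the monotonicity of the balayage operator in the set variable (which is immediate from the definition of $R_\alpha^{(\cdot)} u$ as an infimum over dominating excessive functions, passing to the excessive regularization), this yields
\[
B_\alpha^A u \leq B_\alpha^{F_n^c} u \qquad \text{for every } n\geq 1.
\]
Letting $n \to \infty$ and using \eqref{eq:nest1}, we obtain $B_\alpha^A u \equiv 0$ on $E$.

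Next I would use the strict positivity of $u$ to bootstrap from $B_\alpha^A u = 0$ to $B_\alpha^A 1 = 0$. Since $u>0$ pointwise, $A$ decomposes as the countable union $A = \bigcup_{k\geq 1} A_k$, with $A_k := A \cap \{u \geq 1/k\} \in \mathcal{B}$. On $A_k$ the excessive function $ku$ dominates $1$, so for any $v\in \mathcal{E}(\mathcal{U}_\alpha)$ with $v \geq u$ on $A_k$ we have $kv \in \mathcal{E}(\mathcal{U}_\alpha)$ and $kv \geq 1$ on $A_k$; taking the infimum over such $v$ and then the excessive regularization gives
\[
B_\alpha^{A_k} 1 \leq k\, B_\alpha^{A_k} u \leq k\, B_\alpha^A u \equiv 0.
\]
Thus each $A_k$ is polar.

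The final step, which is the one I expect to be the main obstacle to write down cleanly, is to pass from the polarity of each $A_k$ to the polarity of the countable union $A = \bigcup_k A_k$. The cleanest way I see is to lift the problem to the saturation: by \Cref{thm 4.15} there is a right process $X^1$ on $E_1$ associated with $\mathcal{U}^1$, so by \Cref{rem:polar_characterization}(ii) applied to the extensions $\widetilde{A_k}$, each $A_k$ is $X^1$-unhit from every starting point, and a union-bound on $\{T_{A_k} < \infty\}$ yields $\mathbb{P}^{x}(T_A < \infty) = 0$ for every $x\in E_1$, i.e. $A$ is polar; restricting back to $E$ (and using that $E$ is finely dense in $E_1$ by \Cref{rem:saturation}, so that the excessive function $B_\alpha^A 1$ on $E_1$ is determined by its restriction to $E$) gives $B_\alpha^A 1 \equiv 0$ on $E$. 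Alternatively one can argue purely analytically: $\sigma$-subadditivity of the reduced function, $R_\alpha^{\cup_k A_k} 1 \leq \sum_k R_\alpha^{A_k} 1$, combined with the existence of dominating excessive functions at $x$ arbitrarily small for each $A_k$ (which is where the Ray cone machinery enters), delivers the same conclusion.
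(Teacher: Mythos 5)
Your argument is correct, and its first step --- monotonicity of the balayage in the set variable applied along the inclusion $E\setminus E_0\subset F_n^c$ --- is exactly the paper's entire proof, which consists of the single chain $B_\alpha^{E\setminus E_0}1\leq B_\alpha^{F_n^c}1\searrow 0$. The paper thus works directly with the constant function $1$ rather than with the auxiliary function $u$ from the definition of a $\mathcal{U}$-nest, tacitly using the standard fact that the nest property does not depend on the choice of the strictly positive bounded excessive function. Your steps 2 and 3 fill in precisely that gap: the decomposition $A=\bigcup_k (A\cap[u\geq 1/k])$ together with $B_\alpha^{A_k}1\leq k\,B_\alpha^{A_k}u\leq k\,B_\alpha^{A}u=0$ is the right bootstrap from $u$ to $1$, and it is sound. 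The only place where you overcomplicate matters is the final passage to the countable union: the detour through the saturation and the probabilistic hitting-time characterization works but is far heavier than needed. The purely analytic route you mention in passing closes the argument in one line: the reduced function is countably subadditive, $R_\alpha^{\bigcup_k A_k}1\leq\sum_k R_\alpha^{A_k}1$, and since the excessive regularization satisfies $\widehat{\sum_k v_k}\leq\sum_k\widehat{v_k}$ (by monotone convergence and additivity of $\beta U_{\alpha+\beta}$), one gets $B_\alpha^{A}1\leq\sum_k B_\alpha^{A_k}1=0$ with no appeal to $E_1$, to the process $X^1$, or to any Ray cone machinery. In short: same key mechanism as the paper, plus a more careful (and legitimately needed) reduction from $u$ to $1$, for which the lighter analytic version of your union step is preferable.
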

\begin{proof}
Indeed, since $E\setminus E_0\subset F_n^c$, $n\geq 1$, we have that $B_1^{E\setminus E_0}1\leq B_1^{F_n^c}1\mathop{\searrow}\limits_n 0$.
\end{proof}

The following result resembles \cite[Proposition 4.1]{BeRo11a}; see also
\cite[Remark 3.3]{BeBoRo06a}.
The following assertions hold.
\begin{prop} \label{prop:nest}
Let $\alpha \geq 0$ and $V\in \mathcal{E}(\mathcal{U}_\alpha)$. % be $\mathcal{B}$-measurable. 
\begin{enumerate}[(i)]
    \item If $U_{1}\left(1_{[V=\infty]}\right)=0$, i.e. $[V=\infty]$ is polar according to \Cref{rem:polar_characterization}, then $F_n:=[V\leq n], n\geq 1$ is a $\mathcal{U}$-nest.
    \item If $V>0$ then $F_n:=[V\geq 1/n],$ 
    $ n\geq 1$, is a $\mathcal{U}$-nest.
\end{enumerate}
\end{prop}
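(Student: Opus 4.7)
The plan is to verify the defining condition of \Cref{defi: U-nest} in each part by producing a $\mathcal{U}_\beta$-excessive majorant of the chosen test function $u$ on $F_n^c$ that shrinks to zero, and then invoking the fact that the reduced function $R_\beta^{F_n^c} u$ is by definition the pointwise infimum of all excessive majorants of $u$ on $F_n^c$.

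For part (i), I fix some $\beta \geq \alpha \vee 1$ and a bounded strictly positive $u \in \mathcal{E}(\mathcal{U}_\beta)$ with $u \leq 1$, whose existence is granted by $\mathbf{(H_{\mathcal{C}})}$; note that $V \in \mathcal{E}(\mathcal{U}_\beta)$ as well. On $F_n^c = [V > n]$ the excessive function $V/n$ satisfies $V/n \geq 1 \geq u$, so
\[
B^{F_n^c}_\beta u \;\leq\; R^{F_n^c}_\beta u \;\leq\; V/n,
\]
which converges to $0$ pointwise on $[V<\infty]$. Since the standing hypothesis combined with \Cref{rem:polar_characterization}(i) forces $[V=\infty]$ to be polar, this is precisely the sense in which \Cref{defi: U-nest} is to be read, in view of the equivalent probabilistic characterization from \Cref{rem:prob_nest}.

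For part (ii), since $V > 0$ on $E$ we have $\bigcap_n F_n^c = [V=0] = \emptyset$, so no polar exception appears. I take $u$ to be the excessive regularization of $V \wedge u_0$, where $u_0 \in \mathcal{E}(\mathcal{U}_\alpha)$ is bounded and strictly positive (by $\mathbf{(H_{\mathcal{C}})}$); then $u$ is bounded, strictly positive, $\mathcal{U}_\alpha$-excessive, and $u \leq V$. On $F_n^c = [V < 1/n]$ one has $u \leq V < 1/n$, and the probabilistic identity of \Cref{rem:polar_characterization}(ii), combined with fine continuity of $V$ and fine openness of $F_n^c$, yields
\[
B^{F_n^c}_\alpha u(x) \;\leq\; B^{F_n^c}_\alpha V(x) \;=\; \mathbb{E}^x\!\left[e^{-\alpha T_{F_n^c}}\, V(X_{T_{F_n^c}})\right] \;\leq\; \tfrac{1}{n},
\]
because right-continuity of the paths and the definition of $T_{F_n^c}$ force $V(X_{T_{F_n^c}}) \leq 1/n$ on $\{T_{F_n^c} < \infty\}$. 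Hence $B^{F_n^c}_\alpha u \to 0$ on $E$.

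The main obstacle is the behaviour at $[V=\infty]$ in part (i): because $F_n^c$ is finely open (by \Cref{prop:zero_open} applied to $V-n$, or directly via fine continuity of $V$), one has $B^{F_n^c}_\beta u = u$ on $F_n^c$ for any excessive $u$, so $\lim_n B^{F_n^c}_\beta u (x) = u(x) > 0$ at every $x \in [V=\infty] \subset \bigcap_n F_n^c$. The resolution is that \Cref{defi: U-nest} must be read up to polar sets in this situation, which is the natural reading in view of \Cref{rem:prob_nest}; alternatively, one would need to replace $u$ by an excessive function that vanishes on $[V=\infty]$ while remaining strictly positive off this polar set, a technically delicate construction whose end result is the same nest property. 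Part (ii) avoids this subtlety entirely, since the strict positivity of $V$ rules out the corresponding obstruction.
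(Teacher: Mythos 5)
Your part (i) is essentially the paper's own argument: both proofs produce the $\mathcal{U}_\beta$-excessive majorant $V/n$ of the test function on $F_n^c=[V>n]$ and conclude $B^{F_n^c}_\beta u\leq V/n\to 0$ on $[V<\infty]$ from the infimum definition of the reduced function. The obstacle you flag at $[V=\infty]$ is genuine and is in fact glossed over by the paper: since $F_n^c$ is finely open and contains $[V=\infty]$, one has $B^{F_n^c}_\beta u=u>0$ there for every $n$, so the displayed inequality $B_\beta^{F_n^c\cap[V<\infty]}(V/n)\leq 1_{[V<\infty]}V/n$ in the paper's proof is not valid at points of $[V=\infty]$, and the convergence in \Cref{defi: U-nest} can only hold off that polar set. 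Your reading of the definition \emph{up to polar sets} (equivalently, the probabilistic characterization of \Cref{rem:prob_nest} for measures charging no polar set) is the correct repair, and it is consistent with how the nest is actually used later, since the localization arguments only exploit the nest property along the path for $t>0$ or for quasi-every starting point.

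For part (ii) your conclusion is right but the route differs from the paper's, and the difference matters in context. After reducing to $0<V<1$, the paper observes that the constant $1/n$ is a $\mathcal{U}_\alpha$-excessive function dominating $V$ on $F_n^c=[V<1/n]$, whence $B^{F_n^c}_\alpha V\leq 1/n$ directly from the definition of $R^{F_n^c}_\alpha$ --- a purely analytic one-liner valid on all of $E$ with no polar exception. You instead invoke the hitting-time representation $B^{F_n^c}_\alpha V(x)=\mathbb{E}^x\bigl[e^{-\alpha T_{F_n^c}}V(X_{T_{F_n^c}})\bigr]$, which presupposes the existence of a right process with resolvent $\mathcal{U}$ on $E$. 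At the point where \Cref{prop:nest} is applied (e.g.\ in \Cref{prop:V_quasy_ray} and in the proof of \Cref{thm:main_construction}) no such process has yet been constructed --- indeed the nest property is an ingredient of that construction --- so your argument is circular as written unless you first pass to the saturation $E_1$, where a right process always exists, and transfer the estimate back. Since the analytic bound by the constant $1/n$ is immediate, you should replace the probabilistic step by it; the rest of your construction of the bounded strictly positive $u\leq V$ (via the regularization of $V\wedge u_0$, or simply $V\wedge 1$ using min-stability of $\mathcal{E}(\mathcal{U}_\beta)$ under $\mathbf{(H_{\mathcal{C}})}$) is fine.
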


\begin{proof}
The first statement follows by
\begin{equation*}
    B_\alpha^{F_n^c}1=B_\alpha^{F_n^c\cap [V<\infty]}1\leq B_\alpha^{F_n^c\cap [V<\infty]}(V/n)\leq 1_{[V<\infty]}V/n\xrightarrow[n\to \infty]{}0.
\end{equation*}
To prove (ii), note first that we can assume that $0<V<1$.
Then,
\begin{equation}
   B_\alpha^{F_n^c}V\leq B_\alpha^{F_n^c}(1/n)\leq 1/n \xrightarrow[n\to \infty]{}0. 
\end{equation}
\end{proof}

We introduce the following class of topologies which, in a few words, are generally weaker than natural or Ray topologies, may be non-metrizable, and are generated by functions of the type $U_\alpha f$ only locally on the elements of a $\mathcal{U}$-nest; as a candidate one can consider the weak toplogy on an infinite dimensional Hilbert space.
\begin{defi} \label{def:quasi-U}
\begin{enumerate}[(i)]
    \item We say that $\tau$ is a $\mathcal{U}$-topology on $E$ if there exist $\alpha_k\geq \alpha_1>0$ and $f_k\in \mathcal{B}(E)$, $k\geq 1$, and $V\in \mathcal{E}(\mathcal{U}_{\alpha_0})$ for some $\alpha_0<\alpha_1$ such that
    \begin{enumerate}
        \item[-] $\tau$ coincides with the topology on $E$ generated by the family $U_{\alpha_k} f_k, k\geq 1$
        \item[-] $U_{1}\left(1_{[V=\infty]}\right)=0$  and $|f_k|\leq V$ for all $k\geq 1$,
        \item[-] the family $(U_{\alpha_k} f_k)_k$ separates the set of measures $\{\mu\in\mathcal{P}(E) : \mu (U_{\alpha_k}|f_k|)<\infty\}$.
    \end{enumerate}
    \item We say that $\tau$ is a quasi-$\mathcal{U}$-topology on $E$ with attached $\mathcal{U}$-nest $(F_n)_{n\geq 1}$ if there exists a $\mathcal{U}$-topology $\overline{\tau}$ on $E$ such that
    \begin{equation*}
    \tau|_{F_n}=\overline{\tau}|_{F_n}, \mbox{ and } F_n \mbox{ is both } \tau \mbox{ and } \overline{\tau}\mbox{-closed} \mbox{ for all } n\geq 1.
    \end{equation*}
\end{enumerate}
Clearly, if $\tau$ is a $\mathcal{U}$-topology, then it is also a quasi-$\mathcal{U}$-topology
\end{defi}

\begin{rem} 
Trivially, if $(P_t)_t$ is a Feller semigroup on a locally compact topological space $(E,\tau)$ with countable base, then $\tau$ is a $\mathcal{U}$ topology.
\end{rem}

\begin{rem} \label{rem:quasi-metric}
\begin{enumerate}[(i)]
    \item If $\tau$ is a quasi-$\mathcal{U}$-topology, then its Borel $\sigma$-algebra on $E$ coincides with $\mathcal{B}$.
    \item By \Cref{rem 4.3}, any $\mathcal{U}$-topology is a natural topology on $E$ with respect to $\mathcal{U}$.
    A quasi-$\mathcal{U}$-topology on $E$ with attached $\mathcal{U}$-nest $(F_n)_{n\geq 1}$, $n\geq 1$, is not necessarily metrizable, hence it is not a natural topology on $E$ in general. 
\end{enumerate}
\end{rem}

\begin{rem} \label{rem: Ray_is_U}
The Ray topology induced by a Ray cone $\mathcal{R}_{\mathcal{A}_0}$ generated by a countable family of functions $\mathcal{A}_0\subset \mathcal{B}_p(E)$ which separates the points, in the sense of \Cref{rem: ray_topology}, (ii), is a $\mathcal{U}$-topology on $E$.
This can be easily seen because the topology induced by $\mathcal{R}_{\mathcal{A}_0}$ is in fact generated by a countable family of functions of the type $U_\alpha f$, for some $\alpha>0$ and $f\in p\mathcal{B}_b$. 
The converse is not true in general.
\end{rem}

An immediate result relating right processes and quasi-$\mathcal{U}$ topologies is the following.
\begin{coro}\label{coro:right_quasi_ray}
    If $X$ is a right process on $E$, then it has $\mathbb{P}^\mu$-a.s. right-continuous paths with respect to any natural topology, as well as with respect to any quasi-$\mathcal{U}$-topology, for all $\mu\in \mathcal{P}(E)$.
\end{coro}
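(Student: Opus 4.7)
The first assertion is essentially immediate: \Cref{thm 4.6}(i) already records that a right process is $\mathbb{P}^x$-a.s.\ right-continuous in every natural topology, and integrating against $\mu$ extends this to $\mathbb{P}^\mu$ for every $\mu\in\mathcal{P}(E)$. So no new work is needed for the natural-topology statement; I would simply quote \Cref{thm 4.6}(i).

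For the quasi-$\mathcal{U}$-topology claim, let $\tau$ be quasi-$\mathcal{U}$ with attached $\mathcal{U}$-nest $(F_n)_{n\geq 1}$ and ambient $\mathcal{U}$-topology $\overline{\tau}$. The strategy is to transfer the $\overline{\tau}$-right-continuity along the nest. First, by \Cref{rem:quasi-metric}(ii), $\overline{\tau}$ is already a natural topology, so the first part gives that $X$ has $\mathbb{P}^\mu$-a.s.\ $\overline{\tau}$-right-continuous paths. Second, by \Cref{rem:prob_nest} the $\mathcal{U}$-nest property is equivalent to $T_{F_n^c}\nearrow\infty$ $\mathbb{P}^\mu$-a.s., so on a $\mathbb{P}^\mu$-conull set $\Omega_0$ we have both the $\overline{\tau}$-right-continuity of $t\mapsto X_t(\omega)$ and $T_{F_n^c}(\omega)\uparrow\infty$.

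I then fix $\omega\in\Omega_0$ and $s>0$, and choose $n=n(s,\omega)$ with $T_{F_n^c}(\omega)>s+1$; this forces $X_t(\omega)\in F_n$ for every $t\in[s,s+1)$, and in particular $X_s(\omega)\in F_n$ as well. Since by definition $\tau|_{F_n}=\overline{\tau}|_{F_n}$, the $\overline{\tau}$-right-continuity at $s$ of a trajectory whose values on a right half-neighbourhood of $s$ all lie in $F_n$ is literally identical to $\tau$-right-continuity at $s$, proving the statement for $s>0$. For $s=0$ I would use that $E\setminus E_0=E\setminus\bigcup_n F_n$ is polar (the proposition immediately before \Cref{prop:nest}), so that $\mathbb{P}^\mu$-a.s.\ the path enters $E_0$ immediately; this reduces the initial-time issue to the case $\mu(E\setminus E_0)=0$, where $X_0(\omega)\in F_n$ for some $n$ and the localization-to-$F_n$ argument applies verbatim.

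The argument is essentially bookkeeping and the only mildly delicate point is the handling of the initial time when $\mu$ charges the polar set $E\setminus E_0$; everything substantive — the probabilistic description of finely continuous functions (\Cref{thm 4.6}(i)), the equivalence of $\mathcal{U}$-nests with $T_{F_n^c}\nearrow\infty$ (\Cref{rem:prob_nest}), and the fact that $\mathcal{U}$-topologies are natural (\Cref{rem:quasi-metric}(ii)) — has already been recorded in the excerpt and only needs to be stitched together along the nest.
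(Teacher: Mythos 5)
Your proposal follows essentially the same route as the paper: \Cref{thm 4.6}(i) plus integration over $\mu$ for natural topologies (the paper additionally invokes \cite[Ch.~IV, Theorem 34]{DeMe78} to ensure the exceptional set lies in $\mathcal{F}^u$, so that $x\mapsto\mathbb{P}^x(\Lambda_r)$ is universally measurable and the integration is legitimate -- worth making explicit), and then localization along the attached $\mathcal{U}$-nest using $T_{F_n^c}\nearrow\infty$ together with $\tau|_{F_n}=\overline{\tau}|_{F_n}$. The one place where your write-up does not quite work as stated is the initial time: polarity of $E\setminus\bigcup_n F_n$ controls hitting at \emph{positive} times only, so it does not ``reduce'' to the case $\mu(E\setminus E_0)=0$ when $\mu$ charges that set; the clean fix, using only what you already have, is that on the a.s.\ event where $T_{F_n^c}>0$ one has $X_t\in F_n$ for all small $t>0$, and since $F_n$ is $\overline{\tau}$-closed the $\overline{\tau}$-right limit $X_0$ lies in $F_n$ as well, so the localization argument covers $s=0$ without any appeal to polarity.
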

\begin{proof}
Let $X$ be a right process and $\tau$ be a natural topology.
By \Cref{thm 4.6} we have that $X$ has $\mathbb{P}^x$-a.s. right continuous paths with respect to $\tau$ for all $x\in E$.
Now, by \cite[Ch. IV, Theorem 34]{DeMe78}, we have that
\begin{equation*}
    \Lambda_r:=\left\{\omega\in \Omega : [0,\infty)\ni t\mapsto X_t(\omega)\in E \mbox{ is not right continuous with respect to } \tau\right\} \in \mathcal{F}^u,
\end{equation*}
hence $E\ni x\mapsto \mathbb{P}^x(\Lambda_r)\in \mathbb{R}$ is $\mathcal{B}^u$-measurable, and we can rigorously infer that $\mathbb{P}^\mu(\Lambda_r)=\int_E \mathbb{P}^x(\Lambda_r) \; \mu(dx)=0$ for all $\mu\in \mathcal{P}(E)$.

Assume now that $\tau$ is a quasi-$\mathcal{U}$-topology with attached $\mathcal{U}$-nest $(F_n)_{n\geq 1}$ and $f_k,k\geq 1$ as in \Cref{def:quasi-U}.
Further, let $\overline{\tau}$ be the topology on $E$ generated by $U_{\alpha_k}f_k, k\geq 1$.
Since $\overline{\tau}$ is a natural topology on $E$ with respect to $\mathcal{U}$, by the first part we get that $X$ has $\mathbb{P}^\mu$-a.s. right-continuous paths with respect to $\overline{\tau}$ for all $\mu\in \mathcal{P}(E)$.
To conclude, note that by \Cref{rem:prob_nest} it is sufficient to show that for all $\mu\in \mathcal{P}(E)$ we have that $X$ has $\mathbb{P}^\mu$-a.s. right continuous paths with respect to $\tau$ on each time interval $\left[0,T_{E\setminus F_n}\right), n\geq 1$, which is now clear since on each $\left[0,T_{E\setminus F_n}\right)$ the process $X$ lies in $F_n$, and $\tau|_{F_n}=\overline{\tau}|_{F_n}$.
\end{proof}
 
The remarkable property of quasi-$\mathcal{U}$-topologies is that they preserve the existence of left limits, as well. More precisely, we have the following result:

\begin{prop} \label{prop:quasi_left} 
Let $X$ be a conservative right process on $E$. 
If $X$ has $\mathbb{P}^x$-a.s. left limits in $E$ with respect to one quasi-$\mathcal{U}$-topology on $E$ for all $x\in E$, then $X$ is a Hunt process with respect to any quasi-$\mathcal{U}$-topology on $E$.
\end{prop}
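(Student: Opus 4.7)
Since right-continuity with respect to any quasi-$\mathcal{U}$-topology is already provided by \Cref{coro:right_quasi_ray}, the task reduces to verifying, for an arbitrary quasi-$\mathcal{U}$-topology $\tau_2$, that $X$ has $\mathbb{P}^\mu$-a.s.\ left limits in $E$ and is quasi-left continuous with respect to $\tau_2$, for every $\mu\in\mathcal{P}(E)$. Let $\tau_1$ be the topology from the hypothesis with attached nest $(F_n^1)_n$, and let $\tau_2$ have attached nest $(F_n^2)_n$ and underlying $\mathcal{U}$-topology $\overline{\tau}_2$ generated by functions $(U_{\alpha_k}f_k)_{k\geq 1}$ with $|f_k|\leq V$, $V\in\mathcal{E}(\mathcal{U}_{\alpha_0})$, and $[V=\infty]$ polar, as in \Cref{def:quasi-U}. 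The common refinement $\widetilde{F}_n := F_n^1 \cap F_n^2$ is again a $\mathcal{U}$-nest (by the subadditivity of the balayage operator: $B_1^{F_n^{1,c}\cup F_n^{2,c}}1 \leq B_1^{F_n^{1,c}}1 + B_1^{F_n^{2,c}}1$), so by \Cref{rem:prob_nest} the exit times $\widetilde{T}_n := T_{E\setminus\widetilde{F}_n}$ satisfy $\widetilde{T}_n\nearrow\infty$ $\mathbb{P}^\mu$-a.s. On the intervals $[0,\widetilde{T}_n)$, the paths of $X$ remain in $\widetilde{F}_n$, and $\tau_1,\tau_2$ reduce to $\overline{\tau}_1,\overline{\tau}_2$ respectively, so the analysis can be localized to these intervals.

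The first ingredient is the existence of intrinsic left limits along $X$ of the generating functions of $\overline{\tau}_2$: writing $f_k = f_k^+ - f_k^-$, both $U_{\alpha_k}f_k^{\pm}$ lie in $\mathcal{E}(\mathcal{U}_{\alpha_k})$ (well-defined off the polar set $[V=\infty]$), and \Cref{thm 4.6}(ii) ensures that $(e^{-\alpha_k t}U_{\alpha_k}f_k^\pm(X_t))_{t\geq 0}$ are right-continuous nonnegative $\mathbb{P}^\mu$-supermartingales, which by standard martingale theory admit $\mathbb{P}^\mu$-a.s.\ left limits. By difference, $L_k(t):=\lim_{s\uparrow t}U_{\alpha_k}f_k(X_s)$ exists $\mathbb{P}^\mu$-a.s.\ for each $k\geq 1$ and every $t>0$.

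The decisive step is to identify, for each $t>0$, the sequence $(L_k(t))_k$ with $(U_{\alpha_k}f_k(z_t))_k$, where $z_t := X_{t-}^{\tau_1}\in E$ is the $\tau_1$-left limit furnished by the hypothesis: by the point-separation property of $(U_{\alpha_k}f_k)_k$ on the associated set of measures (\Cref{def:quasi-U}), this identification forces $X_s\to z_t$ in $\overline{\tau}_2$, and hence in $\tau_2$ on $\widetilde{F}_n$, yielding the existence of the $\tau_2$-left limit in $E$. The main obstacle is that $U_{\alpha_k}f_k$ is only finely continuous and not a priori $\tau_1$-continuous at $z_t$, so the identification does not follow directly from $X_{s_n}\to z_t$ in $\tau_1$. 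The remedy, which constitutes the core technical step of the proof, is to enlarge the family $(U_{\alpha_k}f_k)_{k\geq 1}$ together with the generators of $\overline{\tau}_1$ into a countable family of $\mathcal{U}_\beta$-excessive functions generating a common Ray cone (cf.\ \Cref{rem: ray_topology}(ii)), inducing a finer natural topology $\tau_R$ in which $X$ still has left limits in $E$; combining \Cref{thm 4.6} with an optional sampling argument along the predictable approximations of $t$ from the left then allows one to express both $L_k(t)$ and $U_{\alpha_k}f_k(z_t)$ as the common $\tau_R$-limit, securing their equality.

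Once the existence of $\tau_2$-left limits in $E$ is established, quasi-left continuity with respect to $\tau_2$ follows by a classical Hunt-type argument: for $\mathcal{F}_t$-stopping times $T_n\nearrow T$, the optional sampling theorem applied to the right-continuous supermartingales $(e^{-\alpha_k(t\wedge T)}U_{\alpha_k}f_k^\pm(X_{t\wedge T}))_{t\geq 0}$, together with the $\mathbb{P}^\mu$-a.s.\ convergence of $U_{\alpha_k}f_k(X_{T_n})$ granted by the left-limit property, yields $\lim_n U_{\alpha_k}f_k(X_{T_n}) = U_{\alpha_k}f_k(X_T)$ $\mathbb{P}^\mu$-a.s.\ on $[T<\infty]$ for every $k\geq 1$. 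The point-separation of $(U_{\alpha_k}f_k)_k$ then translates this into $X_{T_n}\to X_T$ in $\tau_2$ on $[T<\infty]$, completing the proof that $X$ is Hunt with respect to $\tau_2$.
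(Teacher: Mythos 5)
Your reduction to an arbitrary target topology $\tau_2$, the intersection of the two attached nests, and the observation that $L_k(t):=\lim_{s\uparrow t}U_{\alpha_k}f_k(X_s)$ exists by supermartingale theory are all sound. The proof breaks down, however, at what you yourself call the decisive step: the identification $L_k(t)=U_{\alpha_k}f_k(z_t)$, where $z_t$ is the $\tau_1$-left limit. Your remedy is to enlarge the $U_{\alpha_k}f_k$ into a Ray cone and assert that $X$ ``still has left limits in $E$'' with respect to the resulting finer natural topology $\tau_R$. This is unjustified and essentially assumes the conclusion: passing to a \emph{finer} topology makes the existence of left limits strictly harder, and the hypothesis supplies them only for $\tau_1$. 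A right process always has Ray-left limits in the Ray compactification (equivalently, in the saturation $E_1$), but whether those limits lie in $E$ and are non-branch points is precisely the content of the proposition; nothing in your sketch rules out that the $\tau_R$-left limit escapes $E$ even though the $\tau_1$-left limit does not. The same circularity infects your final step: ``optional sampling yields $\lim_n U_{\alpha_k}f_k(X_{T_n})=U_{\alpha_k}f_k(X_T)$'' requires $X_T$ (equivalently, the terminal value of the Dynkin martingale) to be $\bigvee_n\mathcal{F}_{T_n}^\mu$-measurable, which is itself a form of quasi-left continuity that you have not established; martingale convergence alone only identifies the limit with a conditional expectation of $M_T$.

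The paper's proof avoids this by reversing the order of the two conclusions. It first proves quasi-left continuity with respect to the \emph{given} topology $\tau_1$ (Step II), via the bilinear computation $\mathbb{E}^\mu[U_{\alpha_k}f_k(X_T)\,U_{\alpha_l}f_l(X_{T-})]=\mathbb{E}^\mu[U_{\alpha_k}f_k(X_{T-})\,U_{\alpha_l}f_l(X_{T-})]$ in the style of \cite[Lemma 3.21]{MaRo92}; this uses only the $\tau_1$-continuity of the $\tau_1$-generators on the nest (so that $U_{\alpha_l}f_l(X_{T_n})\to U_{\alpha_l}f_l(X_{T-})$), the Markov property, and the separating property --- no continuity of the $\tau_2$-generators is needed. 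Quasi-left continuity w.r.t.\ $\tau_1$ then makes $X_T$ measurable with respect to $\bigvee_n\mathcal{F}_{T_n}^\mu$, which is exactly what legitimizes the Dynkin-martingale argument (Step III) transferring quasi-left continuity to every quasi-$\mathcal{U}$-topology. Only at the very end does the existence of $\tau_2$-left limits \emph{in $E$} appear, and it is not proved directly: it is extracted from the fact that a conservative, quasi-left continuous right process is a standard process, to which \cite[Theorem 47.10]{Sh88} applies. If you want to keep your architecture, you must either import that theorem (in which case the Ray-cone detour is unnecessary) or first supply the Step II bilinear argument; as written, the left-limit existence in the new topology rests on an assertion equivalent to what is being proved.
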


\begin{proof}
Assume that $X$ has $\mathbb{P}^x$-a.s. left limits in $E$ with respect to some quasi-$\mathcal{U}$-topology $\tau$ for all $x\in E$.

\medskip
{\bf Step I:} {\it $X$ has $\mathbb{P}^\mu$-a.s. c\`adl\`ag paths with respect to $\tau$ for all $\mu\in \mathcal{P}(E)$}.
Indeed, first of all, recall that by \Cref{coro:right_quasi_ray}, $X$ has $\mathbb{P}^\mu$-a.s. right-continuous paths with respect to any quasi-$\mathcal{U}$-topology on $E$ for all $\mu\in\mathcal{P}(E)$.

Further, let $\alpha_k, f_k,k\geq 1$ be as in \Cref{def:quasi-U}, and $\tilde{\tau}$ be the topology on $E$ generated by $U_{\alpha_k}f_k, k\geq 1$.
In particular, $\tilde{\tau}$ is a natural (hence Lusin) topology on $E$ with respect to $\mathcal{U}$. 
Also, by choosing $(F_n)_{n\geq 1}$ to be a  $\mathcal{U}$-nest attached to $\tau$ as in Definition \ref{def:quasi-U}, then by \Cref{rem:prob_nest} we have
\begin{equation*}
  T_{F_n^c}\mathop{\nearrow}\limits_n\infty \quad \mathbb{P}^x \mbox{-a.s. for all } x\in E.
\end{equation*}
Consequently, using that $F_n$ is $\tau$-closed for each $n\geq 1$, we deduce that $X$ has $\mathbb{P}^x$-a.s. c\`adl\`ag paths with respect to $\tilde{\tau}$, for all $x\in E$. 
Since $\tilde{\tau}$ is a Lusin topology, by \cite[Ch. IV, Theorem 34]{DeMe78} we have
\begin{equation*}
    \Lambda:=\left\{\omega\in \Omega : [0,\infty)\ni t\mapsto X_t(\omega)\in E \mbox{ is not c\`adl\`ag w.r.t. } \tilde{\tau} (\mbox{or }\tau)\right\} \in \mathcal{F}^u,
\end{equation*}
hence $E\ni x\mapsto \mathbb{P}^x(\Lambda)\in \mathbb{R}$ is $\mathcal{B}^u$-measurable, and we can rigorously deduce that  $\mathbb{P}^\mu(\Lambda)=\int_E \mathbb{P}^x(\Lambda) \mu(dx)=0$; Step I is therefore achieved.

\medskip
{\bf Step II:} {\it $X$ is quasi-left continuous with respect to $\tau$}. 
To prove this we adapt the argument from \cite[Lemma 3.21]{MaRo92}. 
Let $\mu \in \mathcal{P}(E)$ and $T_n\nearrow_n T$ be $(\mathcal{F}_t)$-stopping times; by writing $[T<\infty]=\cup_{n\geq 1} [T\leq n]$, we can and shall assume without loss of generality that $T$ is bounded. 
Furthermore, since $[V\leq n], n\geq 1$ forms a $\mathcal{U}$-nest according to \Cref{prop:nest}, by writing $[T<\infty]=\cup_{n\geq 1} [T\leq T_{V>n}]$ $\mathbb{P}^\mu$-a.s., we can and shall assume without loss of generality that $\sup\limits_{t\in [0,T]}V(X_t)$ and thus $\sup\limits_{t\in [0,T]}U_{\alpha_k}|f_k|(X_t)\leq \sup\limits_{t\in [0,T]}V(X_t)/(\alpha_k-\alpha_0)$ are bounded uniformly with respect to $k\geq 1$.
Then, using also Step I, we have for all $k,l\geq 1$:
\begin{align*}
\mathbb{E}^\mu\left[U_{\alpha_k}f_k(X_{T}) U_{\alpha_l}f_l(X_{T-})\right]
&=\lim\limits_n \mathbb{E}^\mu\left[U_{\alpha_k}f_k(X_{T}) U_{\alpha_l}f_l(X_{T_n})\right]\\
&=\lim\limits_n \mathbb{E}^\mu\left[\int_0^\infty e^{-\alpha_k t}f_k(X_{T+t}) dt \; U_{\alpha_l}f_l(X_{T_n})\right]\\
&=\lim\limits_n \mathbb{E}^\mu\left[e^{\alpha_k T}\int_T^\infty e^{-\alpha_k t}f_k(X_{t}) dt \; U_{\alpha_l}f_l(X_{T_n})\right]\\
&=\lim\limits_n \mathbb{E}^\mu\left[e^{\alpha_k T_n}\int_{T_n}^\infty e^{-\alpha_k t}f_k(X_{t}) dt \; U_{\alpha_l}f_l(X_{T_n})\right]\\
&=\lim\limits_n \mathbb{E}^\mu\left[U_{\alpha_k}f_k(X_{T_n}) U_{\alpha_l}f_l(X_{T_n})\right]\\
&=\mathbb{E}^\mu\left[U_{\alpha_k}f_k(X_{T-}) U_{\alpha_l}f_l(X_{T-})\right].
\end{align*}
By the assumed separating property of $\left(U_{\alpha_k}f_k\right)_{k\geq 1}$ we obtain that for all $\varphi\in b\mathcal{B}(E)$ and $l\geq 1$
\begin{equation*}
\mathbb{E}^\mu\left[\varphi(X_{T}) U_{\alpha_l}f_l(X_{T-})\right]=\mathbb{E}^\mu\left[\varphi(X_{T-}) U_{\alpha_l}f_l(X_{T-})\right].
\end{equation*}
Arbitrarily fixing $\varphi$ in the above inequality and using again the separating property of $\left(U_{\alpha_l}f_l\right)_{l\geq 1}$ we deduce that for all $\varphi,\psi\in b\mathcal{B}(E)$
\begin{equation*}
\mathbb{E}^\mu\left[\varphi(X_{T}) \psi(X_{T-})\right]=\mathbb{E}^\mu\left[\varphi(X_{T-}) \psi(X_{T-})\right],
\end{equation*}
and by a monotone class argument that
\begin{equation*}
\mathbb{E}^\mu\left[h(X_{T},X_{T-})\right]=\mathbb{E}^\mu\left[h(X_{T-},X_{T-})\right], \quad h\in b\mathcal{B}(E\times E).
\end{equation*}
Taking $h:=1_{{\sf diag}(E\times E)}$ we obtain that $\mathbb{P}^\mu(X_{T}=X_{T-})=1$, so Step II is done.

{\bf Step III:} {\it $X$ is quasi-left continuous with respect any quasi-$\mathcal{U}$ toplogy}.
Let $\overline{\tau}$ be a quasi-$\mathcal{U}$-topology on $E$ with attached $\mathcal{U}$-nest $(F_n)_{n\geq 1}$, as in Definition \ref{def:quasi-U}, and $T_n,n\geq 1$ be $\left(\mathcal{F}_t\right)$-stopping times with $T_n\nearrow T$.
Fix $\mu\in \mathcal{P}(E)$.
Since $[T<\infty]=\cup_{n\geq 1} [T<T_{F_k^c}]$ $\mathbb{P}^\mu$-a.s., it is sufficient to prove that $\lim\limits_n X_{T_n}=X_T$ $\mathbb{P}^\mu$-a.s. w.r.t. $\overline{\tau}$ on $T<T_{F_k^c}$ for all $k\geq 1$.
Since $\overline{\tau}|_{F_k}$ is the trace on $F_k$ of the topology generated by the countable family $U_{\alpha_k}f_k, k\geq 1$ which separates the points,
it is sufficient to show that if we generically set $u:=U_{\alpha_k}f_k$, then
\begin{equation} \label{eq:leftc}
    \lim\limits_n u(X_{T_n})=u(X_T) \mbox{ on } [T<\infty]  \quad \mathbb{P}^\mu\mbox{-a.s. for all } \mu\in\mathcal{P}(E).
\end{equation}
To see this, note first that as in Step II, we can assume without loss of generality that $T$ as well as $\sup\limits_{t\in [0,T]}U_{\alpha_k}|f_k|(X_t)$ are bounded.
Further, by Step II we have that $X$ is quasi-left continuous with respect to $\tau$, which implies that $X_T$ is $\bigvee\limits_n \mathcal{F}_{T_n}^\mu$-measurable, where $\bigvee\limits_n \mathcal{F}_{T_n}^\mu$ is the $\sigma$-algebra generated by $\bigcup\limits_n \mathcal{F}_{T_n}^\mu$.
As a matter of fact, $\bigvee\limits_n \mathcal{F}_{T_n}^\mu=\left(\bigvee\limits_n \mathcal{F}_{T_n}\right)^\mu$.
Then, since
\begin{equation}
    M_t:=u(X_t)-u(X_0)-\int_0^t\alpha_k(\alpha_k U_{\alpha_k} f_k-f_k)(X_s) \; ds, \quad 0\leq t\leq T
\end{equation}
is a c\`adl\`ag $(\mathcal{F}_t)_{t\geq 0}$-martingale under $\mathbb{P}^\mu$, by Doob's stopping theorem we get that
\begin{equation} \label{eq:mart}
    M_{T_n}=\mathbb{E}^\mu\left[M_T | \mathcal{F}_{T_n}\right], \quad n\geq 1, \mathbb{P}^\mu\mbox{-a.s}.
\end{equation}
On the other hand, by the martingale convergence theorem and the fact that $M_T$ is also $\bigvee\limits_n \mathcal{F}_{T_n}^\mu$-measurable, we have that
\begin{equation} \label{eq:conv}
    \lim\limits_n M_{T_n}= \mathbb{E}^\mu\left[M_T \bigg| \bigvee\limits_n \mathcal{F}_{T_n}\right]=\mathbb{E}^\mu\left[M_T \bigg| \left(\bigvee\limits_n \mathcal{F}_{T_n}\right)^\mu\right]= \mathbb{E}^\mu\left[M_T \bigg| \bigvee\limits_n \mathcal{F}_{T_n}^\mu\right]=M_T  \quad \mathbb{P}^\mu\mbox{-a.s}. 
\end{equation}
This implies that \eqref{eq:leftc} holds and thus Step III is complete.

\medskip
{\bf Step IV: }{\it For all $\mu\in\mathcal{P}(E)$ we have that $X$ has $\mathbb{P}^\mu$-a.s. c\`adl\`ag paths with respect to any quasi-$\mathcal{U}$-topology on $E$.} 
To prove this final claim, let $\overline{\tau}$ be a quasi-$\mathcal{U}$-topology on $E$ with attached $\mathcal{U}$-nest $(F_n)_{n\geq 1}$, and $U_{\alpha_k}f_k$ as in \Cref{def:quasi-U}.
Also, let $\tilde{\tau}$ be the topology on $E$ generated by $U_{\alpha_k}f_k, k\geq 1$.
Note that $\tilde{\tau}$ is a natural topology on $E$ with respect to $\mathcal{U}$.
Moreover, since $\overline{\tau}|_{F_n}=\tilde{\tau}|_{F_n}$ for each $n\geq 1$, and $(F_n)_{n\geq 1}$ forms a $\mathcal{U}$-nest, we obtain that $X$ is quasi-left continuous with respect to $\overline{\tau}$ as well. 
This means that $X$ is a standard process in the sense of \cite[Definition 47.4]{Sh88}.
Since $X$ is conservative, by employing \cite[Theorem 47.10]{Sh88}, we obtain that $X$ has left limits in $E$ with respect to $\overline{\tau}$ $\mathbb{P}^\mu$-a.s. for all $\mu\in \mathcal{P}(E)$.
Now we use the localization with respect to the $\mathcal{U}$-nest $(F_n)_{n\geq 1}$ once more, but this time using that the $F_n$'s are $\tilde\tau$-closed, to deduce that $X$ has left limits (hence is c\`adl\`ag) with respect to $\tilde{\tau}$ $\mathbb{P}^\mu$-a.s. for all $\mu\in \mathcal{P}(E)$.

\medskip
Clearly, putting the last two steps together we obtain that $X$ is a Hunt process with respect to any quasi-$\mathcal{U}$-topology on $E$.
\end{proof}

\begin{rem}\label{rem:cadlag+Feller=Hunt}
\begin{enumerate}
    \item[(i)] As mentioned, the fundamental role of \Cref{prop:quasi_left} in this work is to provide the necessary tool to deduce that a right process has c\`adl\`ag paths in some given topology, by proving the same property in some weaker topology. 
    This kind of argument stands behind the main existence results obtained in \Cref{S:Hunt} for generalized Mehler semigroups defined on a Hilbert space, where the corresponding process is essentially shown to have c\`adl\`ag paths with respect to the weak topology, but then this property is automatically transfered to the norm topology since both topologies are shown to be quasi-$\mathcal{U}$ topologies, or weaker.
    \item[(ii)] By a straightforward adaptation of Step I and Step II in the proof of \Cref{prop:quasi_left}, one can also deduce the following essentially known result:  If $X$ is a conservative right process on $E$ which is a.s. c\`adl\`ag with respect to some natural topology $\tau$, and moreover its semigroup $(P_t)_{t\geq 0}$ has the Feller property with respect to $\tau$, then $X$ is a Hunt process with respect to $\tau$.
\end{enumerate}
\end{rem}

\section{Construction of Hunt processes}\label{S:Hunt}

Let $\mathcal{U}$ be a Markovian resolvent of probability kernels on the Lusin measurable space $(E,\mathcal{B})$, endowed with a given topology $\tau$ such that $\sigma(\tau)=\mathcal{B}$.

\begin{defi} \label{defi 3.1}
Let $V\in\mathcal{E}(\mathcal{U}_\alpha)$ for some $\alpha \geq 0$.
\begin{enumerate}[(i)]
    \item We say that $V$ is a norm-like (sequentially) $\tau$-compact $\alpha$-Lyapunov function if $U_{\alpha+1}V<\infty$, i.e. $[V=\infty]$ is polar according to \Cref{rem:polar_characterization}, and the level sets $[V\leq n]$ are (sequentially) relatively $\tau$-compact for all $n\geq 1$.
    \item We say that $V$ is a (sequentially) $\tau$-compact $\alpha$-Lyapunov function vanishing at infinity if $V>0$ and the level sets $\left[V\geq\dfrac{1}{n}\right]$ are (sequentially) relatively $\tau$-compact for all $n\geq 1$.
\end{enumerate}
If $V$ is in any of the above two cases, we call it a (sequentially) $\tau$-compact $\alpha$-Lyapunov function.
\end{defi}

\begin{rem}\label{rem:lyapunov_beta}
$(i)$ Clearly, if $V$ is a ($\sigma$)$\tau$-compact $\alpha$-Lyapunov function for some $\alpha\geq 0$, then it remains a ($\sigma$)$\tau$-compact $\beta$-Lyapunov function for all $\beta\geq \alpha$.

$(ii)$ For examples of using $\tau$-compact $\alpha$-Lyapunov functions 
(where $\tau$ is a metrizable Lusin topology) in constructing c\`adl\`ag right process on various infinite dimensional spaces see 
\cite{Be11}, 
\cite{BeLu 16}, \cite{BeCoRo11}, \cite{BeRo11a}, and the survey paper \cite{BeRo11b}. 

\end{rem}

Now we introduce the main assumption that guarantees the existence of a regular Markov process. 
In fact, there are two assumptions $\mathbf{H_{V,\tau}}$ and $\mathbf{H_{V,\sigma\tau}}$ introduced simultaneously, for simplicity:

\medskip
\noindent
$\mathbf{H_{V,(\sigma)\tau}}$. Assumption $\mathbf{H_{\mathcal{C}}}$ holds and there exist $\beta>\alpha\geq 0$, a (sequentially) $\tau$-compact $\alpha$-Lyapunov function $V$ and a family $(f_k)_{k\geq 1}$ of real-valued $\mathcal{B}$-measurable functions such that
\begin{enumerate}[(i)]
    \item If $V$ is norm-like then $|f_k|\leq c_1 V+c_2$, $k\geq 1$, where $c_1,c_2$ are constants that can depend on $k$, whilst if $V$ is vanishing at infinity then $f_k\in \mathcal{B}_b(E), k\geq 1$.
    \item The family $(f_k)_{k\geq 1}$ separates the set of all finite measures $\nu$ on $E$ which satisfy $\nu(|f_k|)<\infty, k\geq 1$.
    \item For all $n,k\geq 1$ we have that $U_\beta f_k$ is (sequentially) $\tau$-continuous on $\overline{[V\leq n]}$ if $V$ is a norm-like $\alpha$-Lyapunov function, respectively $U_\beta f_k$ is (sequentially) $\tau$-continuous on $\overline{[V\geq 1/n]}$ if $V$ is vanishing at infinity; here, the closure is taken with respect to $\tau$.
\end{enumerate} 

\begin{prop} \label{prop:V_quasy_ray}
If $\mathbf{H_{V,\tau}}$ is satisfied, then the restriction of $\tau$ on $\overline{[V\leq n]}$ if $V$ is norm-like (resp. on $\overline{[V\geq 1/n]}$ if $V$ is vanishing at infinity) is metrizable for all $n\geq 1$, and moreover, $\tau$ is a quasi-$\mathcal{U}$-topology with attached $\mathcal{U}$-nest $(\overline{[V\leq n]})_{n\geq 1}$ (resp. $(\overline{[V\geq 1/n]})_{n\geq 1}$).
\end{prop}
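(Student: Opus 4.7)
The plan is to exhibit an explicit candidate $\mathcal{U}$-topology $\overline{\tau}$ witnessing the quasi-$\mathcal{U}$ structure, namely the topology on $E$ generated by the countable family $\{U_\beta f_k : k\ge 1\}$, and to take $F_n := \overline{[V \le n]}$ (respectively $\overline{[V \ge 1/n]}$) as the attached nest. The two cases run in parallel, so I describe the norm-like case only.

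First I would verify that $\overline{\tau}$ is a $\mathcal{U}$-topology in the sense of \Cref{def:quasi-U}(i). The bound $|f_k|\le c_1^{(k)} V + c_2^{(k)}$ from (i) rescales to $|\tilde f_k|\le V'$ for $\tilde f_k := f_k/a_k$ with $a_k := c_1^{(k)}+c_2^{(k)}$ and $V' := V + 1$ (or $V + U_\beta 1$ in the sub-Markovian case, to remain in some $\mathcal{E}(\mathcal{U}_{\alpha_0})$); this rescaling is topologically harmless. Since $[V' = \infty] = [V = \infty]$ is polar, the structural conditions of \Cref{def:quasi-U}(i) are met with the common exponent $\alpha_k := \beta$ and $V'\in \mathcal{E}(\mathcal{U}_{\alpha_0})$ for any $\alpha_0 \in [\alpha,\beta)$, using $\mathcal{E}(\mathcal{U}_\alpha)\subseteq \mathcal{E}(\mathcal{U}_{\alpha_0})$. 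For the measure-separation clause, if $\mu_1,\mu_2\in \mathcal{P}(E)$ both satisfy $\mu_i(U_\beta|\tilde f_k|)<\infty$ and $\mu_1(U_\beta \tilde f_k) = \mu_2(U_\beta \tilde f_k)$ for every $k$, then the finite measures $\nu_i := \mu_i\circ U_\beta$ integrate each $|\tilde f_k|$ finitely and agree against each $\tilde f_k$; hypothesis (ii) of $\mathbf{H_{V,\tau}}$ then gives $\nu_1 = \nu_2$, i.e.\ $\mu_1\circ U_\beta = \mu_2\circ U_\beta$, and the resolvent equation combined with the convergence $\alpha U_\alpha g \to g$ on the measure-separating cone $\mathcal{C}$ from $\mathbf{(H_{\mathcal{C}})}$ forces $\mu_1 = \mu_2$. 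Specialising to Dirac masses (whose images under $U_\beta$ integrate $|\tilde f_k|$ finitely off the polar set $[V=\infty]$ thanks to $U_\beta V\le V/(\beta-\alpha)$) also yields that $\overline{\tau}$ is Hausdorff.

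Next I would prove the key identity $\tau|_{F_n} = \overline{\tau}|_{F_n}$. Hypothesis (iii) makes every $U_\beta f_k$ $\tau$-continuous on the $\tau$-closed set $F_n$, so $\overline{\tau}|_{F_n}\subseteq \tau|_{F_n}$. The relative $\tau$-compactness of $[V\le n]$ makes $F_n$ $\tau$-compact, and since $\overline{\tau}$ is Hausdorff, the identity $(F_n,\tau|_{F_n})\to(F_n,\overline{\tau}|_{F_n})$ is a continuous bijection from compact to Hausdorff, hence a homeomorphism. Metrizability of $\tau|_{F_n}$ is then immediate from Urysohn's theorem: $\overline{\tau}|_{F_n}$ is a compact Hausdorff topology generated by the countable family $\{U_\beta \tilde f_k|_{F_n}\}_{k\ge 1}$ of bounded continuous functions, hence second countable. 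The same compactness argument shows $F_n$ is $\overline{\tau}$-compact, and therefore $\overline{\tau}$-closed in the Hausdorff space $(E,\overline{\tau})$, as required.

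For the nest property, \Cref{prop:nest}(i) applied to $V$ (using that $[V=\infty]$ is polar) yields that $([V\le n])_{n\ge 1}$ is a $\mathcal{U}$-nest; enlarging each level set to its $\tau$-closure preserves the nest property since $F_n^c\subseteq [V\le n]^c$ gives $B_\gamma^{F_n^c}u \le B_\gamma^{[V\le n]^c}u \searrow 0$. The vanishing-at-infinity case proceeds identically with \Cref{prop:nest}(ii) and bounded $f_k$. The main obstacle I anticipate is the separation step in verifying that $\overline{\tau}$ is a bona fide $\mathcal{U}$-topology: one must carefully propagate the finiteness condition $\mu_i(U_\beta|\tilde f_k|)<\infty$ across the map $\mu\mapsto \mu\circ U_\beta$ using the Lyapunov bound, and then convert $\mu_1\circ U_\beta = \mu_2\circ U_\beta$ back to $\mu_1=\mu_2$ via the resolvent equation and $\mathbf{(H_{\mathcal{C}})}$, while ensuring that the polar set $[V=\infty]$ does not spoil Hausdorff separation at points of $F_n$ itself.
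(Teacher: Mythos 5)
Your proof is correct and takes essentially the same route as the paper: the candidate $\mathcal{U}$-topology is the one generated by $(U_\beta f_k)_{k\ge 1}$, the identification $\tau|_{F_n}=\overline{\tau}|_{F_n}$ comes from $\tau$-compactness of $F_n$ plus the Hausdorff/point-separation property of the $U_\beta f_k$ (the paper phrases this via Stone--Weierstrass, you via the continuous-bijection-from-compact-to-Hausdorff argument), metrizability follows from Urysohn, and the nest property from \Cref{prop:nest} together with monotonicity of the balayage. The only difference is that you spell out the verification that $\overline{\tau}$ is a bona fide $\mathcal{U}$-topology (rescaling the $f_k$ to satisfy $|f_k|\le V$, and the measure-separation step via $\mu\mapsto\mu\circ U_\beta$), which the paper's proof leaves implicit.
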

\begin{proof}
We treat only the case when $V$ is a norm-like $\tau$-compact $\alpha$-Lyapunov function, since the other case is similar.
Let $n\geq 1$. 
Since the family of $\tau$-continuous functions $(U_\beta f_k)_{k\geq 1}$ separates the points of the $\tau$-compact set $\overline{[V\leq n]}$ we get that $\overline{[V\leq n]}$ is Hausdorff, hence by Stone-Weierestrass theorem the family $(U_\beta f_k)_{k\geq 1}$ generates the topology $\tau$ restricted to $\overline{[V\leq n]}$. 
In particular, $\overline{[V\leq n]}$ is also second-countable, so by Urysohn's metrization theorem we conclude that the restriction of $\tau$ on $\overline{[V\leq n]}$ is metrizable. 

The fact that $\tau$ is a quasy-$\mathcal{U}$ topology with attached $\mathcal{U}$-nest $(\overline{[V\leq n]})_{n\geq 1}$ is now clear since according to Proposition \ref{prop:nest} $([V\leq n])_{n\geq 1}$, and hence $(\overline{[V\leq n]})_{n\geq 1}$, are $\mathcal{U}$-nests, and since $\overline{[V\leq n]}$ is $\tau$-compact it is also closed in the topology generated by $\left(U_\beta f_k\right)_{k\geq 1}$.
\end{proof}

\begin{thm}[Construction of Hunt processes]\label{thm:main_construction}
Let $\mathcal{U}$ be a Markovian resolvent of probability kernels on a Lusin measurable space $(E,\mathcal{B})$, and $\tau$ be a given topology on $E$ such that $\sigma(\tau)=\mathcal{B}$.
If $\mathbf{H_{V,\sigma\tau}}$ is fulfilled then the following assertions hold.
\begin{enumerate}[(i)]
    \item There exists a right process $X$ on $E$ with resolvent $\mathcal{U}$, hence it is c\`ad w.r.t. $\tau$.
    If the stronger assumption $\mathbf{H_{V,\tau}}$ is satisfied, then $X$ is c\`adl\`ag with respect to $\tau$.
    \item If $\tau$ is finer than a quasi-$\mathcal{U}$-topology on $E$ and $X$ is c\`adl\`ag with respect to $\tau$, then $X$ is a Hunt process with respect to any quasi-$\mathcal{U}$-topology on $E$.
    \item If the compact Lyapunov function $V$ appearing in $\mathbf{H_{V,\sigma\tau}}$ is vanishing at infinity (see \Cref{defi 3.1}), then $X$ is saturated (see \Cref{defi:saturated}).
\end{enumerate}
\end{thm}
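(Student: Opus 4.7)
My plan proceeds by tackling the three parts in turn, with part (i) providing the scaffold.

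For part (i), the first step is to invoke $\mathbf{H_{\mathcal{C}}}$ (which is embedded in $\mathbf{H_{V,\sigma\tau}}$) and apply \Cref{thm 4.15} to obtain a right process $X^1$ on the saturation $(E_1,\mathcal{B}_1)$ with resolvent $\mathcal{U}^1$; it then suffices to show $E_1\setminus E$ is polar with respect to $\mathcal{U}^1$. Consider the $\mathcal{U}^1_\alpha$-excessive extension $\widetilde V$ of $V$ from \Cref{defi:saturation}. In the norm-like case, $U_{\alpha+1}V<\infty$ gives $[\widetilde V=\infty]$ polar (\Cref{rem:polar_characterization}), and \Cref{prop:nest}(i) yields the $\mathcal{U}^1$-nest $([\widetilde V\leq n])_{n}$; in the vanishing-at-infinity case, $\widetilde V>0$ on $E$ and \Cref{prop:nest}(ii) gives the $\mathcal{U}^1$-nest $([\widetilde V\geq 1/n])_{n}$. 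In either case, each nest member coincides on $E$ with a $\tau$-relatively compact level set of $V$ and hence is contained in $E$; therefore $E_1\setminus E$ sits inside the polar complement of the nest, and \Cref{thm 4.15}(ii) produces the required right process $X$ on $E$.

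For the path regularity claim in (i), under the stronger $\mathbf{H_{V,\tau}}$, \Cref{prop:V_quasy_ray} shows that $\tau$ itself is a quasi-$\mathcal{U}$-topology with attached nest $(\overline{[V\leq n]})_n$ (resp.\ $(\overline{[V\geq 1/n]})_n$). The ambient process $X^1$ on $E_1$ is c\`adl\`ag with respect to some natural (Ray) topology on $E_1$, whose restriction to $E$ is a quasi-$\mathcal{U}$-topology; since $E_1\setminus E$ is polar, $X$ inherits c\`adl\`ag paths in that Ray topology, and \Cref{prop:quasi_left} transfers the c\`adl\`ag property to $\tau$. Under only $\mathbf{H_{V,\sigma\tau}}$, the $U_\beta f_k$ are only \emph{sequentially} $\tau$-continuous on the level sets, so $\tau$ need not be quasi-$\mathcal{U}$; a localization along the nest $([V\leq n])_n$ (mimicking the argument in the proof of \Cref{prop:quasi_left}) still transfers right continuity of $X$ to $\tau$, but left limits need not persist in $\tau$.

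Part (ii) is an almost immediate consequence of \Cref{prop:quasi_left}: if $\tau$ is finer than some quasi-$\mathcal{U}$-topology $\tau_0$ and $X$ is $\mathbb{P}^\mu$-a.s.\ c\`adl\`ag in $\tau$, then convergence in $\tau$ implies convergence in $\tau_0$, so $X$ has left $\tau_0$-limits in $E$; \Cref{prop:quasi_left} upgrades this to the Hunt property with respect to every quasi-$\mathcal{U}$-topology.

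For part (iii), the goal is to strengthen ``$E_1\setminus E$ polar'' to $E_1=E$ so that \Cref{thm:saturation}(ii) gives saturation. Since $V$ vanishes at infinity we have $V>0$ on $E$, so $\widetilde V>0$ on the fine-dense subset $E\subseteq E_1$ (cf.\ \Cref{rem:saturation}); \Cref{prop:zero_open} shows $[\widetilde V=0]$ is finely open, and being disjoint from a fine-dense set it must be empty, so $\widetilde V>0$ throughout $E_1$ and $E_1=\bigcup_n[\widetilde V\geq 1/n]$. The main obstacle I anticipate is to show $[\widetilde V\geq 1/n]\subseteq E$ for every $n$: using that $\widetilde V$ is the fine-continuous extension of $V$ from $E$, any $\xi\in E_1$ with $\widetilde V(\xi)\geq 1/n$ should be captured as a fine limit of points of $E$ eventually lying in the sequentially $\tau$-relatively compact set $[V\geq 1/(2n)]$; combining this with the separating and $\tau$-continuity properties of $(U_\beta f_k)_{k\geq 1}$ supplied by $\mathbf{H_{V,\sigma\tau}}$ should force $\xi\in E$, closing the identification $E_1=E$ and yielding saturation.
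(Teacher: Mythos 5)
Your part (ii) is fine and matches the paper (it is exactly an application of \Cref{prop:quasi_left}), but the two central steps of part (i) contain genuine gaps. In the existence argument, the inference ``each nest member coincides on $E$ with a $\tau$-relatively compact level set of $V$ and hence is contained in $E$'' is not an inference: the containment $[\widetilde V\le n]\subseteq E$ (equivalently $E_1\setminus E\subseteq[\widetilde V=\infty]$) is the main content of the theorem, and it does not follow from $\tau$-compactness of $[V\le n]$ alone, because the given topology $\tau$ has no a priori relation to the fine topology on the saturation $E_1$; a $\tau$-compact subset of $E$ need not be finely closed in $E_1$, so nothing yet prevents a point of $E_1\setminus E$ from satisfying $\widetilde V\le n$. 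Tellingly, your existence step never uses hypotheses (ii) and (iii) of $\mathbf{H_{V,\sigma\tau}}$, which are essential. The paper closes this gap as follows: for $\xi\in E_1$ with $\widetilde V(\xi)<\infty$ it builds a metrizable Ray topology on $E_1$ whose Ray cone contains $V\wedge n$ and $U_\beta f_k^{\pm}\wedge n$; fine density of $E$ in $E_1$ (\Cref{rem:saturation}) then yields an actual sequence $(x_m)\subset E$ with $U_\beta f_k(x_m)\to U_\beta^1 f_k(\xi)$ and $V(x_m)\to\widetilde V(\xi)<\infty$, hence $(x_m)\subset[V\le N]$; sequential $\tau$-compactness extracts a subsequence $\tau$-converging to some $x^\ast\in E$, sequential $\tau$-continuity of $U_\beta f_k$ on $\overline{[V\le N]}$ gives $U_\beta^1 f_k(\xi)=U_\beta f_k(x^\ast)$, and the measure-separating property of $(f_k)$ forces $\xi=\delta_{x^\ast}\circ U_\beta\in E$. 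The same argument (with $[V\ge 1/N]$) is exactly what your part (iii) sketch would have to carry out in full: your reduction of saturation to $[\widetilde V\ge 1/n]\subseteq E$ together with emptiness of the finely open set $[\widetilde V=0]$ is the right frame, but the decisive step is again only asserted (``should force $\xi\in E$''), and one must replace ``fine limit'' by a sequence in a metrizable Ray topology, since fine limits are not sequential.

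The second gap is in the c\`adl\`ag step: the premise ``the ambient process $X^1$ on $E_1$ is c\`adl\`ag with respect to some natural (Ray) topology on $E_1$'' is unjustified and false in general. A right process is right continuous and has left limits in the Ray \emph{compactification}, not in the state space itself; if left limits in $E$ were automatic, every right process would be c\`adl\`ag, contradicting the very examples motivating the paper. The paper instead proves left limits directly: it localizes on the $\tau$-compact sets $F_n=\overline{[V\le n]}$ (a $\mathcal{U}$-nest, so $T_{E\setminus F_n}\nearrow\infty$ a.s.), observes that the countable family of $\tau|_{F_n}$-continuous functions $(U_\beta f_k)_k$ separates the points of the compact set $F_n$, and uses that $e^{-\beta t}U_\beta f_k^{\pm}(X_t)$ are c\`adl\`ag supermartingales to obtain left limits of each $U_\beta f_k(X_t)$, hence of $X_t$ in $F_n$. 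Only after left limits in $E$ are secured in one quasi-$\mathcal{U}$-topology does \Cref{prop:quasi_left} take over, as in your part (ii).
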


\begin{proof}
Let $V$ be a sequentially $\tau$-compact $\alpha$-Lyapunov function so that $\mathbf{H_{V,\sigma\tau}}$ holds with $\beta>\alpha$ as in $\mathbf{H_{V,\sigma\tau}}$, (iii). 
Further, let $E_1$ be the saturation of $E$ w.r.t. $\mathcal{U}_\beta$ and $\mathcal{U}^1$ the extension of $\mathcal{U}$ from $E$ to $E_1$ given by \Cref{defi:saturation}, and $\tilde{V}$ be the unique $\mathcal{U}^1_\alpha$-excessive extension of $V$ from $E$ to $E^1$.
Further, we proceed in two steps.

\medskip
\noindent
{\bf Proof of (ii).} The statement follows directly from \Cref{prop:quasi_left}.

\medskip
\noindent
{\bf Proof of (i) and (iii).} 
We treat the two cases separately.

\medskip
\noindent
{\bf Existence of $X$ when $V$ is norm-like.} 
In this case, it is enough to show that $E_1\setminus E=[\tilde{V}=\infty]$.
Indeed, if the latter equality holds and since $U_1^1(1_{E_1\setminus E})\equiv 0$, we get by \Cref{rem:polar_characterization} that $E_1\setminus E$ is polar, and the existence of a right Markov process on $E$ with resolvent $\mathcal{U}$ follows by \Cref{thm 4.15}.

To show that $E_1\setminus E=[\tilde{V}=\infty]$, let $\xi\in E_1$ such that $\tilde{V}(\xi)<\infty$.
Let $\tau_{\mathcal{R}^1}$ be the Ray topology on $E_1$ generated by the unique exten Ray cone $\mathcal{R}_1$ uniquely induced by $\mathcal{R}_0:=\left\{ n\wedge U_\beta f_k^\pm \; : \; k,n\geq 1\right\}\cup \left\{ V\wedge n \; : \; n\geq 1\right\}$, as in \Cref{rem: ray_topology}; here, $f^+$ and $f^-$ are the positive, resp. negative part of $f$.
In particular, $U_\beta f_k^{\pm}$, $k\geq 1$ are all $\tau_{\mathcal{R}^1}$-continuous, as well as $\tilde V$.
Because $E$ is dense in $E_1$ w.r.t. the fine topology, it is also dense with respect to any coarser topology, in particular,  with respect to $\tau_{\mathcal{R}^1}$, the latter being metrizable.
Therefore, we can find a sequence $(x_n)_{n\geq 1}\subset E$ which converges to $\xi$ with respect to $\tau_{\mathcal{R}^1}$, so that
\begin{equation} \label{eq:limit_xi}
  \lim\limits_n U_\beta f_k(x_n)=U_\beta^1 f_k(\xi),\; k\geq 1, \quad \mbox{ and } \quad  \lim\limits_nV(x_n)=\tilde V(\xi)<\infty,  
\end{equation}
where $f_k$ is extended from $E$ to $E_1$ by setting e.g. $f_k\equiv 0$ on $E_1\setminus E$.
In particular, there exists $N<\infty$ such that $(x_n)_{n\geq 1}\subseteq [V\leq N]$.
We emphasize that by $\mathbf{H_{V,\tau}}$, (i), we have that $U_\beta^1 f_k(\eta)$ is well defined and finite for all $\eta \in E_1$ for which $\tilde V(\eta)<\infty$. 
Now, since $[V\leq N]$ is sequentially $\tau$-compact, there exists $x^\ast \in E$ and a subsequence $(x_{n_k})_{k\geq 1}$ such that $\tau-\lim\limits_k x_{n_k}=x^\ast$.
Thus, since $U_\beta f_k$ is sequentially $\tau$-continuous on $\overline{[V\leq N]}$, we get that $\lim\limits_n U_\beta f_k(x_n)=U_\beta f_k(x^\ast)$ for each $k\geq 1$.
On the other hand, the first convergence in \eqref{eq:limit_xi} leads to
\begin{equation*}
U_\beta^1 f_k(\xi)=U_\beta f_k(x^\ast) \mbox{ for all } k \geq 1,    
\end{equation*}
hence, by \eqref{eq 4.1}, $\xi(f_k)=\delta_{x^\ast}\circ U_\beta(f_k)$ for all $k\geq 1$. 
%On the other hand, 
By $\mathbf{H_{V,\sigma\tau}}$, (i) we have that $\xi(|f_k|)+\delta_{x^\ast}\circ U_\beta(|f_k|)<\infty$,  {so,}   
by 
$\mathbf{H_{V,\sigma\tau}}$, (ii)  $(f_k)_{k\geq 1}$ separates $\xi$ from $\delta_{x^\ast}\circ U_\beta$, hence $\delta_{x^\ast}\circ U_\beta=\xi$ which means that $\xi \in E$. 
In conclusion $E_1\setminus E=[\tilde{V}=\infty]$.

\medskip
\noindent
{\bf Existence of $X$ and the saturation property when $V$ is vanishing at infinity.} 
By \Cref{thm 4.15} and \Cref{thm:saturation}, in order to prove the existence of a right process which is also saturated, it is sufficient (and necessary) to show that $E_1\setminus E=\emptyset$, which in turn will follow immediately if we show that $E^1\setminus E=[\tilde V=0]$. 
Indeed, by \Cref{prop:zero_open} we have that $[\tilde V=0]$ is finely open with respect to the fine topology generated by $\mathcal{U}^1$. 
Also, $U_{1}^1(1_{[E^1\setminus E]})\equiv 0$, hence $E_1\setminus E=\emptyset$; here we use the potential theoretic fact that a finely open set with null potential must be empty.

Now, let us show that we have $E^1\setminus E=[\tilde V=0]$ indeed.
Let $\tau_{\mathcal{R}^1}$ be the Ray topology constructed in the previous case, and $\xi\in E_1\setminus E$ be such that $\tilde V(\xi)>0$. 
Now, the proof follows similarly to the previous case. 
Namely, we can find a sequence $(x_n)_{n\geq 1}\subset E$ which converges to $\xi$ with respect to $\tau_{\mathcal{R}^1}$, so that
\begin{equation} \label{eq:limit_xi_2}
  \lim\limits_n U_\alpha f_k(x_n)=U_\beta^1 f_k(\xi),\; k\geq 1, \quad \mbox{ and } \quad  \lim\limits_nV(x_n)=\tilde V(\xi)>0. 
\end{equation}
Note that $U_\beta^1 f_k(\xi)$ is well defined and bounded for all $\xi \in E_1$, since in this case $f_k$ is assumed to be bounded.
From \eqref{eq:limit_xi_2} we in particular deduce that there exists $N<\infty$ such that $(x_n)_{n\geq 1}\subseteq [V> 1/N]$. 
Since $[V> 1/N]$ is sequentially $\tau$-compact, there exists $x^\ast \in E$ and a subsequence $(x_{n_k})_{k\geq 1}$ such that $\tau-\lim\limits_k x_{n_k}=x^\ast$.
Therefore, since $U_\beta f_k$ is sequentially $\tau$-continuous on $\overline{[V> 1/N]}$, on the one hand we get that $\lim\limits_n U_\beta f_k(x_n)=U_\beta f_k(x^\ast)$ for each $k\geq 1$.
On the other hand, the first convergence in \eqref{eq:limit_xi_2} leads to
\begin{equation*}
U_\beta^1 f_k(\xi)=U_\beta f_k(x^\ast) \mbox{ for all } k \geq 1,    
\end{equation*}
hence, by \eqref{eq 4.1}, $\xi(f_k)=\delta_{x^\ast}\circ U_\beta(f_k)$ for all $k\geq 1$. 
On the other hand, by $\mathbf{H_{V,\sigma \tau}}$, (i) we have that $\xi(|f_k|)+\delta_{x^\ast}\circ U_\alpha(|f_k|)<\infty$, so by $\mathbf{H_{V,\sigma \tau}}$, (ii)  $(f_k)_{k\geq 1}$ separates $\xi$ from $\delta_{x^\ast}\circ U_\beta$, hence $\delta_{x^\ast}\circ U_\beta=\xi$ which means that $\xi \in E$. 
In conclusion, $E_1\setminus E=[\tilde V=0]$.

\medskip
\noindent
{\bf $\mathbf X$ has $\mathbf \tau$-c\`adl\`ag paths.}
Assume that $\mathbf{H_{V,\tau}}$ is satisfied, hence $[V\leq n]$ is relatively $\tau$-compact and $U_\beta f_k$ is $\tau$-continuous on each $\overline{[V\leq n]}$, $n,k\geq 1$, and let $X$ be a right Markov process on $E$ with resolvent $\mathcal{U}$, e.g. the one constructed in (i).
First of all, by \Cref{prop:V_quasy_ray} and \Cref{coro:right_quasi_ray}, $X$ has $\mathbb{P}^\mu$-a.s. right continuous paths with respect to $\tau$ for all $\mu\in \mathcal{P}(E)$.
So it remains to prove that $X$ has left limits with respect to $\tau$.
We do this by localization, so let us set $F_n:=\overline{[V\leq n]}$ if $V$ is norm-like, and $F_n:=\overline{[V\geq 1/n]}$ if $V$ is vanishing at infinity, $n\geq 1$.
By \Cref{rem:prob_nest} and \Cref{prop:nest}, we have that $\lim_{n\to \infty}T_{E\setminus F_n} = \infty$ $\mathbb{P}^\mu$-a.s. for all $\mu\in \mathcal{P}(E)$.
Consequently, it is sufficient to show that $\mathbb{P}^\mu$-a.s. $X$ has paths with left limits in $E$ with respect to $\tau$ on each time interval $\left[0,T_{E\setminus F_n}\right)$, $n\geq 1$, $\mu\in \mathcal{P}(E)$.
Since on each $\left[0,T_{E\setminus F_n}\right)$ the process lies in $F_n$, and the family of $\tau|_{F_n}$-continuous functions $U_\beta f_k, k\geq 1$ separates the points of the $\tau$-compact set $F_n$, it is enough to show that $[0,T_{F_n})\ni t \mapsto U_\beta f_k(X_t)\in \mathbb{R}$ has left limits $\mathbb{P}^\mu$-a.s. for all $\mu\in \mathcal{P}(E)$. 
But the latter property is now clear since $e^{-\beta t}U_\beta f_k(X_t)=e^{-\beta t}U_\beta f_k^+(X_t)-e^{-\beta t}U_\beta f_k^-(X_t), t\geq 0$ is the difference of two c\`adl\`ag supermartingales under $\mathbb{P}^\mu$ for all $\mu\in \mathcal{P}(E)$.
\end{proof}

\paragraph{Saturated Hunt processes associated to weakly Feller semigroups on locally compact spaces.}
In this short paragraph we show that the previously obtained general result, namely \Cref{thm:main_construction}, can be easily used to enhance even the well established result that guarantees the existence of Hunt process associated to a Feller (Markov) semigroup on locally compact spaces. For the sake of comparison, we recall this well known result here, cf. e.g. \cite[Theorem 9.4]{BlGe68}:
{\it
Let $E$ be a locally compact space with a countable base, $\mathcal{B}$ be its Borel $\sigma$-algebra, and $C_0(E)$ denote the space of real-valued continuous functions on $E$ which vanish at infinity.
Let $(P_t)_{t\geq 0}$ be a semigroup of Markov kernels $(E,\mathcal{B})$ which is Feller, i.e. i) $P_0=Id$, \; ii) $P_t(C_0(E))\subset C_0(E), t>0$, \; and iii) $P_tf \mathop{\rightarrow}\limits_{t\to 0} f$ uniformly on $E$ for every $f\in C_0(E)$.
Then there exists a Hunt process on $E$ with transition function $(P_t)_{t\geq 0}$.
}

Note that under the assumptions of the classical result from above, for every $0<v\in C_0(E)$ we have that $V:=U_qv \in C_0(E)$ is a compact Lyapunov function vanishing at infinity, in the sense of \Cref{defi 3.1}.
The refined version of the above result, derived directly from \Cref{thm:main_construction}, is the following:

\begin{coro} \label{coro:lccb}
Let $E$ be a locally compact space with a countable base, $\mathcal{B}$ be its Borel $\sigma$-algebra, and $(P_t)_{t\geq 0}$ be a normal semigroup of Markov kernels $(E,\mathcal{B})$ which have the (weak) Feller property, i.e. i) $P_0=Id$, \; ii) $P_t(C_b(E))\subset C_b(E), t>0$, \; and iii) $P_tf \mathop{\rightarrow}\limits_{t\to 0} f$ pointwise on $E$ for every $f\in C_b(E)$.
Further, let $\mathcal{U}:=\left(U_\alpha\right)_{\alpha>0}$ be the resolvent of $(P_t)_{t\geq 0}.$
If there exists a function $\mathcal{B}_b\ni v>0$ and $\beta>0$ such that $U_\beta v\in C_0(E)$, then there exists a Hunt process on $E$ with transition function $(P_t)_{t\geq 0}$, and any such process is saturated.    
\end{coro}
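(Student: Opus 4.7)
The plan is to apply \Cref{thm:main_construction} with the Lyapunov function $V := U_\beta v$ and with $\tau$ equal to the given locally compact topology on $E$. All three claims of the corollary will follow at once: existence of a right process with $\tau$-c\`adl\`ag paths from part (i), saturation from part (iii), and the Hunt property from the fact that $\tau$ itself will be a quasi-$\mathcal{U}$-topology (via \Cref{prop:V_quasy_ray} and \Cref{prop:quasi_left}). The remaining assertion that \emph{any} right process with transition function $(P_t)_{t\geq 0}$ is saturated will then follow from the remark after \Cref{thm:saturation} stating that saturation depends only on the resolvent.

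First I would verify $\mathbf{H_{\mathcal{C}}}$ with $\mathcal{C} := C_b(E)$. The cone $C_b(E)$ is min-stable and contains $1$; since $E$ is locally compact second-countable (hence Polish), $C_b(E)$ contains a countable subfamily separating finite measures on $E$. The inclusion $U_\alpha(C_b(E)) \subset C_b(E)$ follows from the weak Feller property by dominated convergence in $U_\alpha f(x) = \int_0^\infty e^{-\alpha t} P_t f(x)\,dt$, while the pointwise convergence $\alpha U_\alpha f \to f$ is obtained from the normality $P_t f \to f$ pointwise via the substitution $s = \alpha t$. Next I would identify $V := U_\beta v$ as a sequentially $\tau$-compact $\beta$-Lyapunov function vanishing at infinity in the sense of \Cref{defi 3.1}: it is $\mathcal{U}_\beta$-excessive by \Cref{rem:U_exc}, strictly positive on $E$ by positivity of $v$ together with normality, and its super-level sets $[V \geq 1/n]$ are $\tau$-compact since $V \in C_0(E)$. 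Finally, fixing any $q > \beta$ and any countable family $(f_k)_{k \geq 1} \subset C_b(E)$ that separates finite measures on $E$, each $U_q f_k \in C_b(E)$ is $\tau$-continuous on all of $E$, hence on every $\overline{[V \geq 1/n]}$, so $\mathbf{H_{V,\tau}}$ holds (with the $\alpha$ of that assumption equal to our $\beta$ and its $\beta$ equal to our $q$).

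\Cref{thm:main_construction}(i) then yields a right process $X$ on $E$ with resolvent $\mathcal{U}$ and $\tau$-c\`adl\`ag paths, and part (iii) of the same theorem gives that $X$ is saturated. \Cref{prop:V_quasy_ray} shows that $\tau$ is a quasi-$\mathcal{U}$-topology with attached nest $(\overline{[V \geq 1/n]})_{n \geq 1}$, so \Cref{prop:quasi_left} upgrades the $\tau$-c\`adl\`ag property to the Hunt property with respect to $\tau$, finishing the existence statement. The saturation of any other right process sharing $(P_t)_{t\geq 0}$ then follows from the remark after \Cref{thm:saturation}. The only non-bookkeeping step is the verification that weak Feller plus normality is sufficient to guarantee $U_\alpha(C_b(E)) \subset C_b(E)$ together with $\alpha U_\alpha f \to f$ pointwise; once this is in hand, the rest is essentially a direct translation of the hypothesis $U_\beta v \in C_0(E)$ into the language of compact Lyapunov functions vanishing at infinity.
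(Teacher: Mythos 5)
Your proposal is correct and follows essentially the same route as the paper: verify $\mathbf{H_{\mathcal{C}}}$ with $\mathcal{C}=C_b(E)$, recognize $V=U_\beta v\in C_0(E)$ as a compact Lyapunov function vanishing at infinity, and apply \Cref{thm:main_construction}. The only cosmetic difference is that the paper first identifies the original topology with a Ray topology $\tau_{\mathcal{R}_{\mathcal{A}_0}}$ via Stone--Weierstrass before invoking the theorem, whereas you apply the theorem directly to the original topology and obtain the quasi-$\mathcal{U}$ property from \Cref{prop:V_quasy_ray}; both are valid and rest on the same ingredients.
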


\begin{proof}
First of all, it is clear that $\mathcal{U}$ satisfies condition $(\mathbf{H_\mathcal{C}})$ with $\mathcal{C}=C_b(E)$.
Further, let $\mathcal{A}_0:=(f_n)_{n\geq 0}\subset C_b(E)$ be measure separating, $\beta>0$, and denote by $\mathcal{R}_{\mathcal{A}_0}$ the Ray cone generated by $\mathcal{A}_0$ associated to $\mathcal{U}_\beta$, as described in \Cref{rem: ray_topology}, (ii); let $\tau_{\mathcal{R}_{\mathcal{A}_0}}$ be the corresponding Ray topology.
On the one hand, it is clear that $\mathcal{R}_{\mathcal{A}_0}\subset C_b(E)$, hence $\tau_{\mathcal{R}_{\mathcal{A}_0}}$ is coarser than the original topology on $E$. 
On the other hand, the original topology on $E$ is fully determined by its trace on the compact sets, and on each such compact set, by Stone-Weierstrass theorem, $\mathcal{A}_0$ generates it. 
Since $\mathcal{A}_0\subset \mathcal{R}_{\mathcal{A}_0}$, we obtain that the original topology is coarser than $\tau_{\mathcal{R}_{\mathcal{A}_0}}$.
Consequently, the original topology coincides with $\tau_{\mathcal{R}_{\mathcal{A}_0}}$.

Further, let $C_0(E)\ni V:= U_\beta v>0$, so that $V$ is a compact $\beta$-Lyapunov function vanishing at infinity.
It is now clear that condition $\mathbf{H_{V,\tau_{\mathcal{R}_{\mathcal{A}_0}}}}$ is completely fulfilled, so the result follows by \Cref{thm:main_construction}.
\end{proof}

\noindent
{\bf Example.} Any L\'evy process on $\mathbb{R}^d$ (i.e. a Hunt process whose semigroup is of convolution type) is a saturated process. 
In the next section we shall prove that the saturation property remains valid even for {\it generalized Mehler semigroups} defined on Hilbert spaces, in the case when the drift is given by a bounded operator. However, this will no longer hold when the drift is unbounded.

\section{Generalized Mehler semigroups and construction of associated Markov processes}\label{S:Mehler}

In this section we follow the framework of \cite{BoRoSc96}, \cite{FuRo00}, or \cite{LeRo04}. Throughout, $(H, \langle \cdot, \cdot\rangle)$ is a real separable Hilbert space with Borel $\sigma$-algebra denoted by $\mathcal{B}$, in particular $(H,\mathcal{B})$ is a Lusin measurable space.
Also let us fix
\begin{enumerate}
    \item[-] a strongly continuous semigroup $(T_t)_{t\geq 0}$ on $H$ with (possibly unbounded) generator $\left({\sf A}, D({\sf A})\right)$ and
    \item[-] a family of probability measures $(\mu_t)_{t\geq 0}$ on $H$ such that 
    \begin{enumerate}
    \item[($\rm M_1$)] $\lim\limits_{t\to 0}\mu_t = \mu_0=\delta_0$ weakly,
    \item[($\rm M_2$)] $\mu_{t+s}=(\mu_t\circ T_s^{-1})\ast \mu_s$ for all $s,t\geq 0$.
\end{enumerate}
\end{enumerate} 
Then we can consider the {\it generalized Mehler semigroup} $(P_t)_{t\geq 0}$ on $H$ defined as
\begin{equation}\label{eq: Mehler}
P_tf(x)=\int_H f(T_tx+y) \; \mu_t(dy) \quad \mbox{ for all } x\in H \mbox{ and } f\in b\mathcal{B}(H). 
\end{equation}
Clearly, $(P_t)_{t\geq 0}$ is a semigroup of Markov operators on $H$. 
Our main goal here is to show that under some conditions that generalize the usual ones used to construct Ornstein-Uhlembeck processes or stochastic convolutions with continuous paths in infinite dimensions, we can always associate a Hunt process with  $(P_t)_{t\geq 0}$,  with state space $H$.

Since $(T_t)_{t\geq 0}$ is strongly continuous on $H$, there exist $M,\omega\geq 0$ such that
\begin{equation} \label{eq 1.1}
|T_tx|_H\leq Me^{\omega t}|x|_H \quad \mbox{ for all } x\in H \mbox{ and } t\geq 0.
\end{equation}

Now, let us collect some regularity properties of $(P_t)_{t\geq 0}$ that result easily from \Cref{eq: Mehler}. 
To this end, let us denote by $C_b^w(H)$ (resp. $C_b^{\sigma w}(H)$) the space of bounded (resp. sequentially) weakly continuous functions on $H$.

\begin{prop} \label{prop: regularity_Mehler}
The following assertions hold for all $t>0$
\begin{enumerate}[(i)]
    \item The mapping $[0,\infty)\times H\ni(t,x) \mapsto P_tf(x)\in \mathbb{R}$ is continuous for every  $f\in Lip_b(H)$. 
    Moreover, $P_t(Lip_b(H))\subset Lip_b(H)$ and $P_t(C_b(H))\subset C_b(H)$ for all $t> 0$.
    \item $P_t(C_b^{\sigma w}(H))\subset C_b^{\sigma w}(H)$.
    \item If $T_t$ is compact, then $P_t(C_b(H))\subset C_b^{\sigma w}(H)$.
\end{enumerate}
\end{prop}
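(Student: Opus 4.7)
The plan is to reduce each assertion to a dominated-convergence-type argument on the integrand $z \mapsto f(T_tx + z)$ against the measures $\mu_t$, with the extra subtlety in (i) that the time variable also varies. A preliminary step, which is not directly part of the standing hypotheses, is to upgrade the weak continuity of $\mu_t$ at $t=0$ (condition $(M_1)$) to weak continuity of $[0,\infty)\ni t\mapsto \mu_t$ at every $s\ge 0$. For right-continuity at $s$, I would use $(M_2)$ to write $\mu_{s+h} = (\mu_h\circ T_s^{-1})\ast \mu_s$, and by Fubini reduce $\int g\,d\mu_{s+h}-\int g\,d\mu_s$ to $\int \psi_h\,d\mu_s$ where $\psi_h(z):=\int[g(T_sw+z)-g(z)]\,\mu_h(dw)$; splitting on $\{|w|\le\delta\}\cup\{|w|>\delta\}$, uniform continuity of $g\in C_b(H)$ (with $\delta\le\eta/(Me^{\omega s})$) controls the first piece while Portmanteau applied to $(M_1)$ gives $\mu_h(|w|>\delta)\to 0$, so $\|\psi_h\|_\infty\to 0$. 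Left-continuity at $s>0$ is the delicate point: writing $\mu_s = (\mu_h\circ T_{s-h}^{-1})\ast \mu_{s-h}$ yields $\int g\,d\mu_s-\int g\,d\mu_{s-h} = \int (g_h-g)\,d\mu_{s-h}$ with $g_h(z):=\int g(T_{s-h}w+z)\,\mu_h(dw)$, and the same splitting argument produces $\|g_h-g\|_\infty\to 0$. Since uniformly continuous bounded functions characterize weak convergence on a metric space, $t\mapsto\mu_t$ is weakly continuous on all of $[0,\infty)$.

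With this in hand, (i) is standard. For $f\in Lip_b(H)$ with constant $L$, the bound
\begin{equation*}
|P_tf(x)-P_tf(y)| \le L\|T_tx-T_ty\| \le LMe^{\omega t}\|x-y\|
\end{equation*}
yields both $P_t(Lip_b(H))\subset Lip_b(H)$ and $x$-Lipschitz continuity uniform on $t$-bounded sets. For joint continuity at $(s,y)$, given $(t_n,x_n)\to(s,y)$ with $t_n\le T$, the triangle
\begin{equation*}
|P_{t_n}f(x_n)-P_sf(y)| \le LMe^{\omega T}\|x_n-y\| + |P_{t_n}f(y)-P_sf(y)|
\end{equation*}
reduces matters to the time slice; writing $g_n(z):=f(T_{t_n}y+z)$ and $g(z):=f(T_sy+z)$, the uniform estimate $\|g_n-g\|_\infty\le L\|T_{t_n}y-T_sy\|\to 0$ combined with the weak convergence $\mu_{t_n}\to\mu_s$ established above gives $\int g_n\,d\mu_{t_n}\to\int g\,d\mu_s$. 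The inclusion $P_t(C_b(H))\subset C_b(H)$ then follows from dominated convergence: for any $f\in C_b(H)$ and $x_n\to x$ in $H$, $f(T_tx_n+z)\to f(T_tx+z)$ pointwise in $z$ with bound $\|f\|_\infty$.

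Items (ii) and (iii) are easier, and the only difference between them is which topology $T_t$ preserves. For (ii), a bounded linear operator is weakly continuous, so $x_n\to x$ weakly implies $T_tx_n+z\to T_tx+z$ weakly for every $z\in H$; sequential weak continuity of $f\in C_b^{\sigma w}(H)$ gives pointwise convergence of the integrand and dominated convergence (with bound $\|f\|_\infty$) yields $P_tf(x_n)\to P_tf(x)$, with boundedness of $P_tf$ being obvious. For (iii), compactness of $T_t$ upgrades weak to strong convergence $T_tx_n\to T_tx$ in $H$, after which ordinary continuity of $f\in C_b(H)$ together with dominated convergence concludes. The main obstacle throughout is the preliminary left-continuity of $\mu_t$: the skew-convolution identity $(M_2)$ only provides a forward relation $\mu_s=\nu_h\ast\mu_{s-h}$ (with $\nu_h=\mu_h\circ T_{s-h}^{-1}\to\delta_0$) which cannot be algebraically inverted, so the proof must route through the auxiliary family $g_h$ and, crucially, obtain \emph{uniform} closeness $\|g_h-g\|_\infty\to 0$ for uniformly continuous $g$ in order to sidestep any tightness analysis for $\{\mu_{s-h}\}_{h\in(0,s]}$, which would be non-trivial in the infinite-dimensional setting.
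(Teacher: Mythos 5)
Your proof is correct and follows essentially the same route as the paper's: the Lipschitz/semigroup estimate combined with a decomposition in the $t$ and $x$ variables for (i), and dominated convergence (using weak continuity of bounded operators, respectively the weak-to-strong upgrade for compact operators) for (ii) and (iii). The only difference is that you explicitly derive the weak continuity of $[0,\infty)\ni t\mapsto\mu_t$ from $(\rm M_1)$ and $(\rm M_2)$ via the skew-convolution identity --- including the delicate left-continuity, which your uniform estimate $\|g_h-g\|_\infty\to 0$ handles correctly without any tightness analysis of $\{\mu_{s-h}\}$ --- whereas the paper simply invokes ``the weak continuity of $(\mu_t)_{t\geq 0}$'' without proof.
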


\begin{proof}
i). If $f\in Lip_b(H)$ and $x,x'\in H$, and $t,t'\geq 0$ then
\begin{align*}
    \left|P_tf(x)-P_{t'}f(x')\right|&\leq \left|P_tf(x)-P_{t}f(x')\right|+\left|P_{t}f(x')-P_{t'}f(x')\right|\\
    &\leq M|f|_{Lip} e^{\omega_0t}|x-x'|_H+\left|\int_H f(T_{t}x'+y) \; \mu_t(dy)-\int_H f(T_{t'}x'+y) \; \mu_{t'}(dy)\right|\\
    &\leq  M|f|_{Lip} e^{\omega_0t}|x-x'|_H+\left|\int_H f(T_{t}x'+y) \; \mu_t(dy)-\int_H f(T_{t'}x'+y) \; \mu_{t}(dy)\right|\\
    &\quad +\left|\int_H f(T_{t'}x'+y) \; \mu_t(dy)-\int_H f(T_{t'}x'+y) \; \mu_{t'}(dy)\right|\\
    &\leq M|f|_{Lip} e^{\omega_0t}|x-x'|_H+M|f|_{Lip} e^{\omega_0t}|T_t x'-T_{t'}x'|_H\\
    &\quad + \left|\int_H f(T_{t'}x'+y) \; \mu_t(dy)-\int_H f(T_{t'}x'+y) \; \mu_{t'}(dy)\right|
\end{align*}
which converges to $0$ when $(t,x)$ convergences to $(t',x')$ by the strong continuity of $(T_t)_{t\geq 0}$ and the weak continuity of $(\mu_t)_{t\geq 0}$.

The fact that $P_t(Lip_b(H))\subset Lip_b(H)$ is clear by taking $t=t'$ in the above inequality, whilst $P_t(C_b(H))\subset C_b(H)$ follows simply by dominated convergence, or, if we wish, as a consequence of the former inclusion, since the set of functions $Lip_b(H)$ is convergence determining for the weak convergence of probability measures on $H$.

ii). If $x_n \mathop{\rightharpoonup}\limits_n  x$ in $H$ then $T_tx_n \mathop{\rightharpoonup}\limits_n  T_t x$ for each $t\geq 0$. 
Hence, if $f\in C_b^{\sigma w}(H)$ then $\lim\limits_nf(T_tx_n+y)= f(T_tx+y)$ for all $y\in H$, so the assertion follows again by dominated convergence.

iii). This assertion is proved similarly to ii), since a compact operator maps weakly convergent sequences to strongly convergent ones.
\end{proof}
By \Cref{prop: regularity_Mehler}, we  can construct the resolvent of Markov kernels on $H$ associated with $(P_t)_{t\geq 0}$, which we denote by $\mathcal{U}:=(U_{\alpha})_{\alpha > 0}$.
Further, $\tau_{\|\|}$ (resp. $\tau_w$) denotes the norm (resp. weak) topology on $H$, whilst $({\sf L}, D_b({\sf L}))$ stands for the weak generator associated with  $\mathcal{U}$ as given by \Cref{eq:D_b_L}.

In the next propositions we analyze the existence of compact Lyapunov functions for the generalized Mehler semigroup $(P_t)_{t\geq 0}$ given by \eqref{eq: Mehler}, depending on whether $A$ is a bounded or an unbounded operator, respectively; also, we investigate if the norm and weak topologies are $\mathcal{U}$ or quasi-$\mathcal{U}$ topologies on $H$; recall \Cref{def:quasi-U}.

\begin{prop}\label{prop:tau_||}
If the generator ${\sf A}$ of $(T_t)_{t\geq 0}$ is bounded, then the following assertions hold:
\begin{enumerate}[(i)]
    \item For all $T> 0$ we have
        \begin{equation}
        \lim_{\|x\|\to \infty} \inf_{t\in [0,T]}\|T_tx\|=\infty.
        \end{equation}
    \item $V:H \rightarrow [0,1]$ given by
    \begin{equation}\label{eq:V_0}
        V(x):= U_1v(x), \quad  \mbox{ where } v(x):= (1+\|x\|)^{-1}, \quad x\in H,
    \end{equation}   
    is a $\tau_w$-compact $1$-Lyapunov function vanishing at infinity.
    \item If $f\in Lip_b(H)$ has bounded support, then
    \begin{equation}\label{eq:uniform_A_bounded}
    \lim\limits_{\alpha\to \infty} |\alpha U_\alpha f-f|_\infty=0.
    \end{equation}
    \item $\tau_{\|\|}$ is a $\mathcal{U}$-topology on $H$.
\end{enumerate}
\end{prop}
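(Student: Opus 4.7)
For (i), the boundedness of ${\sf A}$ makes each $T_t = e^{t{\sf A}}$ invertible with $\|T_t^{-1}\| \leq e^{t\|{\sf A}\|}$, so $\|T_tx\| \geq e^{-T\|{\sf A}\|}\|x\|$ for $t \in [0,T]$, which proves (i). For (ii), $V = U_1 v \in \mathcal{E}(\mathcal{U}_1)$ by \Cref{rem:U_exc}, and $V > 0$ because $v > 0$. Since bounded subsets of $H$ are relatively weakly (sequentially) compact, it is enough to show that $V(x) \to 0$ as $\|x\| \to \infty$. For each fixed $t \geq 0$ and $y \in H$, part (i) yields $\|T_tx + y\| \geq \|T_tx\| - \|y\| \to \infty$, so $v(T_tx+y) \to 0$. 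Two successive applications of dominated convergence, first inside $P_tv(x) = \int_H v(T_tx + y)\,\mu_t(dy)$ and then in $V(x) = \int_0^\infty e^{-t}P_tv(x)\,dt$, give the required decay.

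For (iii), the change of variable $s = \alpha t$ gives $\alpha U_\alpha f - f = \int_0^\infty e^{-s}(P_{s/\alpha}f - f)\,ds$, so it suffices to show $\sup_x|P_tf(x) - f(x)| \to 0$ as $t \to 0$. We split
\begin{equation*}
P_tf(x) - f(x) = \int_H [f(T_tx + y) - f(T_tx)]\,\mu_t(dy) + [f(T_tx) - f(x)].
\end{equation*}
The first term is bounded in modulus by $\int_H(|f|_{\rm Lip}\|y\| \wedge 2|f|_\infty)\,\mu_t(dy)$, which is independent of $x$ and tends to $0$ by $\mu_t \to \delta_0$ weakly. For the second term, the bounded-support hypothesis is essential: if $\mathrm{supp}\,f \subset B_R$ and $f(T_tx) - f(x)$ is nonzero, then $x \in B_R$ or $T_tx \in B_R$; in the latter case $\|x\| \leq \|T_t^{-1}\|R \leq e^{\|{\sf A}\|}R$ for $t \in [0,1]$. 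Hence $|f(T_tx) - f(x)| \leq |f|_{\rm Lip}\|T_t - I\|e^{\|{\sf A}\|}R$, which tends to $0$ uniformly in $x$ because boundedness of ${\sf A}$ entails uniform continuity of $(T_t)$ at $0$ in the operator norm.

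For (iv), fix a countable family $(g_k)_{k \geq 1} \subset {\rm Lip}_b(H)$ with $|g_k|_\infty \leq 1$ and bounded support that both generates $\tau_{\|\cdot\|}$ and is measure-determining on $H$; a standard choice is the countable multiplicative class generated by functions $(r - \|x-y\|)^+$ with $(y,r)$ ranging over $D \times (\mathbb{Q}_+ \cap (0,1])$ for $D$ a countable dense subset of $H$. Re-enumerate $(U_n g_k)_{k,n \geq 1}$ as a single sequence $(U_{\alpha_j}f_j)_{j \geq 1}$ with all $\alpha_j \geq 1$, and take the majorant $V \equiv 1 \in \mathcal{E}(\mathcal{U})$ with $\alpha_0 = 0 < \alpha_1 := 1$; then $|f_j| \leq V$ and $[V = \infty] = \emptyset$ hold trivially. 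Each $U_n g_k$ is norm-continuous by \Cref{prop: regularity_Mehler}(i) combined with dominated convergence, so the topology generated by $(U_{\alpha_j}f_j)$ is coarser than $\tau_{\|\cdot\|}$; conversely, (iii) gives $nU_n g_k \to g_k$ uniformly, so each $g_k$ is continuous in the generated topology, forcing the two topologies to coincide. The same uniform convergence combined with the measure-determining property of $(g_k)$ shows that $(U_{\alpha_j}f_j)$ separates all finite Borel measures on $H$. The main subtlety is to simultaneously secure all three conditions of \Cref{def:quasi-U}(i), and the uniform convergence from (iii) is what makes this achievable.
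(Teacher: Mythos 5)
Your proof is correct, and it rests on the same core ingredients as the paper's: boundedness of ${\sf A}$ to control $\|T_tx\|$ from below and $\|T_t-I\|$ near $t=0$, the bounded support of $f$ for the uniform convergence in (iii), and the uniform convergence \eqref{eq:uniform_A_bounded} to force the resolvent-generated topology to contain $\tau_{\|\|}$ in (iv). There are, however, genuine differences in execution. For (i) you invert $T_t=e^{t{\sf A}}$ and obtain $\|T_tx\|\geq e^{-T\|{\sf A}\|}\|x\|$ in one line, whereas the paper iterates a small-time estimate $\|T_tx\|\geq \|x\|/2$ for $t\leq t_0$; your route is shorter and gives an explicit constant. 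For (iii) the paper proves \eqref{eq:uniform_A_bounded} by splitting the state space into a ball $\|x\|\leq R$ (handled via the initial value theorem for the Laplace transform) and its complement $\|x\|\geq R_1$ (where $f$ vanishes and $\alpha U_\alpha f$ is controlled by $\mu_t(H\setminus B(0,1))$); you instead substitute $s=\alpha t$ to reduce to uniform strong continuity of $P_t$ at $t=0$ and then split $P_tf-f$ into a noise part (uniform in $x$, controlled by $\mu_t\to\delta_0$) and a drift part $f(T_t\cdot)-f$ (controlled by bounded support plus $\|T_t-I\|\to 0$); the two decompositions are of comparable difficulty, but yours isolates more transparently where the bounded-support hypothesis enters. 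For (iv) the paper identifies $\tau_{\|\|}$ with a Ray topology $\tau_{\mathcal{R}_{\mathcal{A}}}$ and invokes \Cref{rem: Ray_is_U}, while you verify the three bullets of \Cref{def:quasi-U}(i) directly with the family $(U_n g_k)$, $V\equiv 1$, $\alpha_0=0$; this is more self-contained (and mirrors what the paper itself does in \Cref{prop:tau_w:quasi-U}), at the cost of checking the measure-separation condition by hand, which you do correctly via the uniform convergence $nU_ng_k\to g_k$. Part (ii) is essentially identical to the paper's argument.
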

\begin{proof}
(i). Since $A$ is bounded we have
\begin{align}
    &\|T_tx-x\|\leq \left(e^{t\|A\|}-1\right)\|x\|\leq t\|A\|e^{\|A\|}\|x\|\leq \|x\|/2 \quad \mbox{ for } t\leq t_0:= \left(2\|A\|e^{\|A\|}\right)^{-1},\\
    & \|T_t x\| \geq \|x\|-\|T_tx-x\|\geq \|x\|/2 \quad \mbox{ for } t\leq t_0,\\
    & \|T_t x\|=\|T_{t-[t/t_0]t_0}T_{t_0}^{[t/t_0]}x\|\geq 2^{-([t/t_0]+1)}\|x\| \quad \mbox{ for all } t>0,
\end{align}
so assertion (i) follows.

(ii). Notice that $0<V\leq 1$ is $\mathcal{U}_1$-excessive, and by \Cref{prop: regularity_Mehler} it is also $\tau_{\|\|}$-continuous.
Furthermore, by assertion (i) and dominated convergence we have
    \begin{equation}
        \lim_{\|x\|\to \infty} V(x)= \int_0^\infty e^{-t} \int_H \lim_{\|x\|\to \infty}\left(1+\|T_tx+y\|\right)^{-1} \mu_t(dy)\;dt=0,
    \end{equation}
which means that $[V\geq 1/N]$ is $\tau_w$-compact for all $N\geq 1$, hence $V$ is a $\tau_w$-compact 1-Lyapunov function vanishing at infinity.

(iii).
If $f\in Lip_b(H)$ then
\begin{align}
    \sup\limits_{\|x\|\leq R}|\alpha U_\alpha f(x)-f(x)|&\leq \sup\limits_{\|x\|\leq R}\alpha\int_0^\infty e^{-\alpha t}\int_H |f(T_tx+y)-f(x)|\;\mu_t(dy)\;dt\\
    &\leq |f|_{Lip}\left[\alpha\int_0^\infty e^{-\alpha t}\sup\limits_{\|x\|\leq R}\|T_tx-x\|\;dt+\alpha\int_0^\infty e^{-\alpha t}\int_H \|y\|\wedge(2|f|_\infty)\;\mu_t(dy)\;dt\right]\\
    &\leq |f|_{Lip}\left[\alpha\int_0^\infty e^{-\alpha t}M(e^{\omega t}-1)\|A\|R\;dt+\alpha\int_0^\infty e^{-\alpha t}\int_H \|y\|\wedge(2|f|_\infty)\;\mu_t(dy)\;dt\right].
\end{align}
Therefore, by the initial value theorem for the Laplace transform we have
\begin{equation}\label{eq:sup<R}
\lim\limits_{\alpha\to\infty}\sup\limits_{\|x\|\leq R}|\alpha U_\alpha f(x)-f(x)|=0. \quad R>0,
\end{equation}
Further, let $R_0>0$ such that ${\rm supp}(f)\subset B(0,R_0)$. 
Then by assertion (i) there exists $R_1$ such that $\mathop{\inf}\limits_{\|x\|\geq R_1}\mathop{\inf}\limits_{t\in [0,1]}\|T_tx\|\geq R_0+1$, and hence for $x\in H$ with $\|x\|\geq R_1$ we have
\begin{align*}
 \alpha U_\alpha f(x)&=\alpha \int_0^{\infty}e^{-\alpha t} \int_H f(T_tx+y)\; \mu_t(dy) \; dt \\
 &\leq |f|_\infty\alpha \int_0^{1}e^{-\alpha t} \int_H 1_{B(-T_tx,R_0)}(y)\; \mu_t(dy) \; dt\\
 &\leq |f|_\infty\alpha \int_0^{1}e^{-\alpha t} \mu_t(H\setminus B(0,\|T_tx\|-R_0)) \; dt \\
 &\leq |f|_\infty\alpha \int_0^{\infty}e^{-\alpha t} \mu_t\left(H\setminus B\left(0,1\right)\right) \; dt.
\end{align*}
Therefore, since $\mathop{\lim}\limits_{t\to 0}\mu_t=\delta_0$ weakly and applying once again the initial value theorem, we get
\begin{equation}\label{eq:sup>R}
    \limsup_{\alpha \to \infty} \sup_{\|x\|\geq R_1}\alpha U_\alpha f(x) =0,
\end{equation}
and by combining \eqref{eq:sup<R} with \eqref{eq:sup>R}, we get
\begin{equation*}
    \lim\limits_{\alpha\to \infty} |\alpha U_\alpha f-f|_\infty=0.
\end{equation*}

iv). Let $\mathcal{A}\subset pLip_b(H)$ be a countable family of functions with bounded support, which generates the norm topology $\tau_{\|\|}$ on $H$, and such that $\mathcal{A}$ separates all finite measures on $H$; note that such a family always exists.
Let $\tau_{\mathcal{R}_{\mathcal{A}}}$ be the Ray topology induced by the corresponding Ray cone $\mathcal{R}_{\mathcal{A}}$ associated with $\mathcal{U}_1$ (see \Cref{rem: ray_topology}), so that  $\tau_{\mathcal{R}_{\mathcal{A}}}$ is in particular a $\mathcal{U}$-topology according to \Cref{rem: Ray_is_U}.
Then, on the one hand, since by \Cref{prop: regularity_Mehler} and the way $\mathcal{R}_{\mathcal{A}}$ is constructed we get that $\mathcal{R}_{\mathcal{A}}\subset C_b(H)$, thus $\tau_{\mathcal{R}_{\mathcal{A}}}\subset \tau_{\|\|}$,
On the other hand, if $f\in \mathcal{A}$ and $\alpha>0$ then $U_\alpha f=U_1f +(1-\alpha)U_\alpha U_1f\in \mathcal{R}_{\mathcal{A}}$, and from the uniform convergence \eqref{eq:uniform_A_bounded} we deduce that the elements of $\mathcal{A}$ are $\tau_{\mathcal{R}_{\mathcal{A}}}$ continuous.
Since $\mathcal{A}$ generates the norm topology, we obtain that $\tau_{\|\|}\subset \tau_{\mathcal{R}_{\mathcal{A}}}$, and the claim is finished.
\end{proof}

We now turn to the case when $A$ is an unbounded operator, for which we shall rely on the following condition.

\medskip
\noindent{$\mathbf{(H_{A,\mu})}$.} The following are fulfilled:
\begin{enumerate}[(i)]
    \item {\it Existence of an eigenbasis}: There exists an orthonormal basis $(e_i)_{i\geq 1}$ in $H$ such that $A^\ast e_i=\lambda_i e_i,i\geq 1$ with $(\lambda_i)_{i\geq 1}\subset \mathbb{R}\setminus\{0\}$ decreasing to $-\infty$.
    \item {\it Spectral bounds:} There exist $p\geq 2$, $a>0$, and $q>p\lambda_1$ such that
    \begin{align}
        &c_1:=\sum\limits_{i\geq 1} |\lambda_i|^{1+a} \int_0^\infty e^{-qt} \int_H |\langle y,e_i\rangle|^p\wedge 1\;\mu_t(dy)\;dt<\infty, \label{eq:V_1}\\
        \intertext{and}
        &  \sum\limits_{i\geq 1}|\lambda_i|^{a} e^{p\lambda_it}<\infty, \; t>0, \quad \mbox{ as well as } \quad \sum\limits_{i\geq 1}\frac{1}{|\lambda_i|^{\frac{2a}{p-2}}}<\infty. \label{eq:V_2}
    \end{align}
    \item {\it Moment bound}: There exists $0<\gamma\leq 1$ such that
    \begin{equation}\label{eq:V_3}
     \int_0^\infty e^{-qt} |\langle y, e_i \rangle|^{\gamma}\;\mu_t(dy) <\infty, \quad i\geq 1.
    \end{equation}
\end{enumerate}

\begin{prop}\label{prop:V_H_mu_1}
Assume that $\mathbf{(H_{A,\mu})}$ is fulfilled, let $q>p\lambda_1$, and $0<\gamma_i\leq 1, i\geq 1$ be any decreasing sequence of real numbers such that
\begin{equation}\label{eq:gamma_i}
    \sum\limits_{i\geq 1} \gamma_i^{2/(2-\gamma)}<\infty \quad \mbox{ and } \quad c_2:=\sum\limits_{i\geq 1} \gamma_i \int_0^\infty e^{-qt} |\langle y, e_i \rangle|^{\gamma}\;\mu_t(dy) <\infty,
\end{equation}
which is always possible.
Further, let us define
\begin{align}
    &v(x):=\sum\limits_{i\geq 1} |\lambda_i|^{1+a}\left(|\langle x,e_i\rangle|^p\wedge 1\right)+\sum\limits_{i\geq 1} \gamma_i |\langle x,e_i \rangle|^{\gamma}, \quad V(x):=U_qv(x), \quad x\in H\label{eq:V_H_mu_1}\\
    &H_0:=\left\{x\in H : \sum\limits_{i\geq 1}|\lambda_i|^a |\langle x,e_i\rangle|^p < \infty \right\}=\left\{x\in H : \sum\limits_{i\geq 1}|\lambda_i|^a \left(|\langle x,e_i\rangle|^p\wedge 1\right) < \infty \right\}.\label{eq:H_0}
\end{align}
Then the following assertions hold:
\begin{enumerate}[(i)]
    \item $[V<\infty]=H_0$
    \item $V$ is $\mathcal{U}_q$-excessive, $T_t(H)\subset H_0$, and $H\setminus H_0$ is polar.
    \item $\lim\limits_{\|x\|\to \infty}V(x)=\infty$, hence $V$ is a norm-like $\tau_w$-compact $q$-Lyapunov function.
    \item For all $f\in Lip(H)$ and $R>0$ we have
    \begin{equation}
        \lim\limits_{\alpha\to \infty} \sup_{x\in[V\leq R]}|\alpha U_\alpha f(x)-f(x)|=0.
    \end{equation}
    \item Let $\tau_{\|\|,V}$ be the coarser topology on $H$ which contains $\tau_{\|\|}$ and makes $U_q(v\wedge k)$ continuous for all $k\geq 1$, where recall that $v$ is given by \eqref{eq:V_H_mu_1}. 
    Then $\tau_{\|\|,V}$ is a quasi-$\mathcal{U}$-topology on $H$.
\end{enumerate}
\end{prop}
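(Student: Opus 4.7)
The approach is to work directly from the representation
\[
V(x)=\int_0^\infty e^{-qt}\!\!\int_H v(T_tx+y)\,\mu_t(dy)\,dt,
\]
combined with the spectral identity $\langle T_tx,e_i\rangle=e^{\lambda_i t}\langle x,e_i\rangle$ coming from $A^\ast e_i=\lambda_i e_i$. Every estimate then reduces to a scalar coordinatewise computation based on the elementary inequalities
\[
|a+b|^p\wedge 1\le 2^{p-1}\bigl[(|a|^p\wedge 1)+(|b|^p\wedge 1)\bigr],\qquad 2^{p-1}|a+b|^p+|b|^p\ge |a|^p,
\]
and, for $0<\gamma\le 1$, $|a+b|^\gamma\le|a|^\gamma+|b|^\gamma$ together with $|a+b|^\gamma\ge|a|^\gamma-|b|^\gamma$.

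For (i), expanding with the upper inequalities separates $V(x)$ into $x$- and $y$-contributions. The $y$-contributions integrate to finite constants via Fubini and \eqref{eq:V_1}--\eqref{eq:gamma_i}; the $x$-contribution collapses, using that $\int_0^\infty e^{-qt}(e^{p\lambda_it}|\langle x,e_i\rangle|^p\wedge 1)\,dt$ is of order $1/|\lambda_i|$ for large $i$, to a sum equivalent to $\sum_i|\lambda_i|^a(|\langle x,e_i\rangle|^p\wedge 1)$, which is finite precisely when $x\in H_0$. Applying the lower inequalities gives the reverse inclusion. For (ii), $\mathcal{U}_q$-excessivity of $V=U_qv$ is \Cref{rem:U_exc}; $T_tH\subset H_0$ follows from
$\sum_i|\lambda_i|^a|e^{\lambda_it}\langle x,e_i\rangle|^p\le\|x\|^p\sup_i|\lambda_i|^ae^{p\lambda_it}<\infty$
using the first summability in \eqref{eq:V_2}. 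For the polarity of $H\setminus H_0$, Fubini applied to \eqref{eq:V_1} yields $\mu_t(H_0)=1$ for a.e.\ $t>0$ (noting $|\lambda_i|^a\le |\lambda_i|^{1+a}$ for $i$ large); since $H_0$ is a linear subspace and $T_tx\in H_0$, we get $T_tx+y\in H_0$ for $\mu_t$-a.e.\ $y$, hence $U_1 1_{H\setminus H_0}\equiv 0$ and \Cref{rem:polar_characterization} concludes.

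Step (iii) is the main obstacle. Because $H$ is Hilbert, by Banach--Alaoglu $[V\le R]$ is $\tau_w$-relatively compact iff it is norm-bounded, so the task reduces to proving $V(x)\to\infty$ as $\|x\|\to\infty$. I would argue by contradiction along a sequence $\|x_n\|\to\infty$ with $V(x_n)\le R$, splitting into two cases. \emph{Case A:} along some subsequence a coordinate $|\langle x_{n_k},e_{i_0}\rangle|\to\infty$; the $v_2$-lower bound
\[
U_qv_2(x)\ge\sum_i\frac{\gamma_i|\langle x,e_i\rangle|^\gamma}{q-\gamma\lambda_i}-c_2
\]
then forces $V(x_{n_k})\to\infty$. \emph{Case B:} all coordinates stay uniformly bounded; then $\|x_n\|\to\infty$ forces the index set $I_\epsilon(x_n):=\{i:|\langle x_n,e_i\rangle|\ge\epsilon\}$ to have diverging cardinality for some $\epsilon>0$. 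Combining the truncated lower bound $|a+b|^p\wedge 1\ge((|a|/2)^p\wedge 1)1_{\{|b|\le|a|/2\}}$ with the weak convergence $\mu_t\to\delta_0$ (giving $\mu_t(\{|\langle y,e_i\rangle|\le\epsilon/2\})\to 1$ as $t\to 0$ for each $i$) and the divergence of $\sum_{i\in I_\epsilon(x_n)}|\lambda_i|^{1+a}$, one produces a lower bound for $P_tv_1(x_n)$ on a small $t$-interval which grows without bound in $n$, whence $U_qv_1(x_n)\to\infty$, again contradicting $V(x_n)\le R$. Making the $t$-interval uniform in $n$ and balancing the $|\lambda_i|$-growth against the $\mu_t$-tails via \eqref{eq:V_2}--\eqref{eq:V_3} is the delicate quantitative point.

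For (iv), (iii) provides $\|x\|\le C_R$ on $[V\le R]$. Write $\alpha U_\alpha f-f=\alpha\int_0^\infty e^{-\alpha t}(P_tf-f)\,dt$ and split at $\delta>0$: the tail $(\delta,\infty)$ vanishes uniformly as $\alpha\to\infty$ by the Laplace initial-value theorem; on $[0,\delta]$ one bounds
$|P_tf(x)-f(x)|\le|f|_{Lip}\|T_tx-x\|+\int(|f|_{Lip}\|y\|)\wedge 2|f|_\infty\,\mu_t(dy)$.
The first term is uniformly small on $[V\le R]$ via the H\"older split
$\sum_{|\lambda_i|>1/t}|\langle x,e_i\rangle|^2\le C_R^{2/p}\bigl(\sum_{|\lambda_i|>1/t}|\lambda_i|^{-2a/(p-2)}\bigr)^{(p-2)/p}$
using the second summability in \eqref{eq:V_2}, and the second is uniformly small by $a\wedge M\le a^\gamma M^{1-\gamma}$ combined with \eqref{eq:V_3}. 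Finally for (v), let $\mathcal{A}\subset Lip_b(H)$ be a countable measure-separating family that generates $\tau_{\|\|}$, and let $\bar\tau$ be the topology on $H$ generated by $\{U_qf:f\in\mathcal{A}\}\cup\{U_q(v\wedge k):k\ge 1\}$; by construction this is a $\mathcal{U}$-topology in the sense of \Cref{def:quasi-U}. Put $F_n:=[V\le n]$, a $\mathcal{U}$-nest by \Cref{prop:nest} and closed in both $\tau_{\|\|,V}$ and $\bar\tau$ because $V=\sup_kU_q(v\wedge k)$ is lower-semicontinuous in both topologies. The inclusion $\bar\tau\subseteq\tau_{\|\|,V}$ is immediate since every generator of $\bar\tau$ is norm-continuous by \Cref{prop: regularity_Mehler}(i); for the reverse on $F_n$, (iv) shows that every $f\in\mathcal{A}$ is the uniform limit on $F_n$ of the $\bar\tau$-continuous functions $\alpha U_\alpha f$, hence is itself $\bar\tau$-continuous on $F_n$. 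Together with the generators $U_q(v\wedge k)$, this yields $\tau_{\|\|,V}|_{F_n}=\bar\tau|_{F_n}$ and thereby the quasi-$\mathcal{U}$-property.
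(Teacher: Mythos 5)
Parts (i) and (ii) are essentially the paper's argument and are fine, and your (iv) follows the same Laplace-transform/H\"older strategy as the paper. The genuine problem is (iii). Your Case~A/Case~B dichotomy is not exhaustive (a coordinate with moving index can exceed $1$ without any fixed coordinate diverging), and, more seriously, the key claim in Case~B is false: if all coordinates of $x_n$ are uniformly bounded and $\|x_n\|\to\infty$, it does \emph{not} follow that $\#\{i:|\langle x_n,e_i\rangle|\ge\epsilon\}\to\infty$ for some fixed $\epsilon>0$. Take $x_n$ with $n^2$ coordinates each equal to $n^{-1/2}$: then $\|x_n\|^2=n\to\infty$ while $I_\epsilon(x_n)=\emptyset$ for every fixed $\epsilon$ and large $n$. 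So the mechanism you propose (divergence of $\sum_{i\in I_\epsilon}|\lambda_i|^{1+a}$ combined with $\mu_t\to\delta_0$) cannot be run, and you yourself leave the ``delicate quantitative point'' open. The paper's proof of (iii) avoids all of this: from the lower bound $V(x)\gtrsim\sum_i|\lambda_i|^a(|\langle x,e_i\rangle|^p\wedge 1)+\sum_i\gamma_i(q-\gamma\lambda_i)^{-1}|\langle x,e_i\rangle|^\gamma-C$ one first gets an $L$ with $|\langle x_n,e_i\rangle|<1$ for $i\ge L$, then applies H\"older with exponents $p/2$ and $p/(p-2)$ to write
\begin{equation*}
\|x_n\|^2\le\sum_{i<L}|\langle x_n,e_i\rangle|^2+\Bigl(\sum_{i\ge1}|\lambda_i|^{-\frac{2a}{p-2}}\Bigr)^{\frac{p-2}{p}}\Bigl(\sum_{i\ge L}|\lambda_i|^a|\langle x_n,e_i\rangle|^p\wedge 1\Bigr)^{2/p},
\end{equation*}
controls the finitely many low modes by the $\gamma$-term, and concludes $\|x_n\|^2=\mathcal{O}(\|x_n\|^{2-\gamma})$, a contradiction. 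This is the step where the second condition in \eqref{eq:V_2} is actually used, and it handles precisely the ``spread-out mass'' configurations that defeat your Case~B.

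A secondary issue is in (v): you generate $\bar\tau$ only by $\{U_qf:f\in\mathcal{A}\}\cup\{U_q(v\wedge k)\}$, but then invoke the $\bar\tau$-continuity of $\alpha U_\alpha f$ for large $\alpha$. Nothing in your generating family makes $U_\alpha f$ ($\alpha\ne q$) $\bar\tau$-continuous, since $U_\alpha f=U_qf+(q-\alpha)U_q(U_\alpha f)$ involves $U_q$ applied to a function not in $\mathcal{A}$. The paper fixes this by putting $\alpha U_\alpha f_k$ for all rational $\alpha>q$ directly into the generating family; you should do the same (the rest of your (v), including closedness of $F_n=[V\le n]$ via $V=\sup_kU_q(v\wedge k)$ and the inclusion $\bar\tau\subseteq\tau_{\|\|,V}$, is then correct).
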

\begin{proof}
First of all, note that since $v\geq 0$, the function $V=U_qv$ is well defined with values in $[0,\infty]$. 

\medskip
(i). Let us first prove the inclusion $H_0\subset [V<\infty]$.
To this end, we proceed as follows:
\begin{align}
    V(x)&=\int_0^\infty e^{-qt} \int_H v(T_tx+y)\;\mu_t(dy)\;dt\\
    &= \int_0^\infty e^{-qt} \int_H \left(\sum\limits_{i\geq 1} |\lambda_i|^{1+a}\left(|\langle T_tx+y,e_i\rangle|^p\wedge 1\right)+\sum\limits_{i\geq 1} \gamma_i |\langle T_tx+y,e_i\rangle|^{\gamma}\right)\;\mu_t(dy)\;dt\\
    &\leq 2^{p-1}\sum\limits_{i\geq 1} |\lambda_i|^{1+a} \int_0^\infty e^{-qt} \int_H \left(|\langle x,T_t^\ast e_i\rangle|^p\wedge 1+|\langle y,e_i\rangle|^p\wedge 1\right)\;\mu_t(dy)\;dt\\
    &\quad + \int_0^\infty e^{-qt} \int_H \sum\limits_{i \geq 1} \gamma_i \left(|\langle x, T_t^\ast e_i \rangle|^{\gamma}+|\langle y, e_i \rangle|^{\gamma}\right)\;\mu_t(dy)\;dt\\
    &\leq 2^{p-1}\left[\sum\limits_{i\geq 1} |\lambda_i|^{1+a} \int_0^\infty e^{-qt}|\langle x,T_t^\ast e_i\rangle|^p\wedge 1\;dt+\sum\limits_{i\geq 1} |\lambda_i|^{1+a} \int_0^\infty e^{-qt} \int_H |\langle y,e_i\rangle|^p\wedge 1\;\mu_t(dy)\;dt\right]\\
    &\quad + \sum\limits_{i\geq 1} \gamma_i\left(\int_0^\infty e^{-qt}  \left(|\langle x, T_t^\ast e_i \rangle|^{\gamma}+ \int_H |\langle y, e_i \rangle|^{\gamma}\;\mu_t(dy)\right)\;dt\right)\\
    \intertext{and by \eqref{eq:V_1}, \eqref{eq:V_3}, and \eqref{eq:gamma_i} we can continue with}
    &\leq 2^{p-1}\left[\sum\limits_{i\geq 1} |\lambda_i|^{1+a} \int_0^\infty e^{-qt}e^{p\lambda_it}\;dt|\langle x, e_i\rangle|^p+c_1\right]+\sum\limits_{i\geq 1} \gamma_i\int_0^\infty e^{-qt} e^{\gamma\lambda_it}\;dt |\langle x, e_i \rangle|^{\gamma}+c_2\\
    &\leq 2^{p-1}\left[\sum\limits_{i\geq 1} |\lambda_i|^{1+a} \int_0^\infty e^{-qt}e^{p\lambda_it}\;dt|\langle x, e_i\rangle|^p+c_1\right]+\frac{1}{q-\gamma \lambda_1}\sum\limits_{i\geq 1} \gamma_i|\langle x, e_i \rangle|^{\gamma}+c_2\\
    &\leq 2^{p-1}\sum\limits_{i\geq 1} |\lambda_i|^{1+a} \frac{1}{q-p\lambda_i}|\langle x, e_i\rangle|^p+\frac{1}{q-\gamma \lambda_1}\left(\sum\limits_{i\geq 1} \gamma_i^{2/(2-\gamma)}\right)^{(2-\gamma)/2}\|x\|^{\gamma/2} +c_2 \label{eq:|x|gamma}\\
    &\leq 2^{p-1}\sum\limits_{i\geq 1} |\lambda_i|^{a} \frac{|\lambda_i|}{q-p\lambda_i}|\langle x, e_i\rangle|^p+\widetilde{c}(x)\\
    \intertext{and since $\sup_{i\geq 1}\frac{|\lambda_i|}{q-p\lambda_i}<\infty$ and \eqref{eq:gamma_i} is in force, we can continue as}
    &\leq \widetilde{c}\sum\limits_{i\geq 1} |\lambda_i|^{a}|\langle x, e_i\rangle|^p+\widetilde{c}(x), \quad \mbox{ where } \widetilde{c}(x)<\infty \mbox{ for all } x\in H.
\end{align}

It remains to show that $[V<\infty]\subset H_0$. To this end, using the inequality $|a-b|\wedge 1 \geq |a|\wedge1-|b|\wedge 1$, we have that
\begin{equation}
    |a+b|^p\wedge 1\geq |2^{1-p}|a|^p-|b|^p|\wedge 1\geq \left(2^{1-p}|a|^p\right)\wedge 1-|b|^p\wedge 1\geq 2^{1-p}\left(|a|^p\wedge 1\right)-|b|^p\wedge 1.
\end{equation}
\begin{align}
    V(x)&\geq \sum\limits_{i\geq 1} |\lambda_i|^{1+a} \int_0^\infty e^{-qt} \int_H |\langle T_tx+y,e_i\rangle|^p\wedge 1\;\mu_t(dy)\;dt\\
    &\geq 2^{1-p}\sum\limits_{i\geq 1} |\lambda_i|^{1+a} \int_0^\infty e^{-qt} \left(|\langle x,T_t^\ast e_i\rangle|^p\wedge 1\right)\;dt-\sum\limits_{i\geq 1} |\lambda_i|^{1+a} \int_0^\infty e^{-qt} \int_H |\langle y,e_i\rangle|^p\wedge 1\;\mu_t(dy)\;dt,\\
    \intertext{and by \eqref{eq:V_1} we can continue with}
    V(x)&\geq 2^{1-p}\sum\limits_{i\geq 1} |\lambda_i|^{1+a} \int_0^\infty e^{-qt} \left(\left(e^{pt\lambda_i}|\langle x, e_i\rangle|^p\right)\wedge 1\right)\;dt-c_1\\
    &\geq 2^{1-p}\sum\limits_{i\geq 1} |\lambda_i|^{1+a}\left(|\langle x, e_i\rangle|^p\wedge 1\right) \int_0^\infty e^{-qt} \left(e^{pt\lambda_i}\wedge 1\right)\;dt-c_1\\
    &\geq 2^{1-p}\sum\limits_{i\geq 1} |\lambda_i|^{1+a}\left(|\langle x, e_i\rangle|^p\wedge 1\right) \int_0^\infty e^{-qt} e^{-pt|\lambda_i|}\;dt-c_1\\
    &=2^{1-p}\sum\limits_{i\geq 1} |\lambda_i|^a\left(|\langle x, e_i\rangle|^p\wedge 1\right) \frac{|\lambda_i|}{q+p|\lambda_i|}-c_1\\
    &\geq \widetilde{c}\sum\limits_{i\geq 1} |\lambda_i|^a\left(|\langle x, e_i\rangle|^p\wedge 1\right) -\widetilde{c} \quad \mbox{ for some } \widetilde{c}<\infty. \label{eq:lower_bound}
\end{align}

(ii). $V$ is $\mathcal{U}_q$-excessive by \Cref{rem:U_exc}. 
The fact that $H\setminus H_0=[V=\infty]$ is polar follows from \Cref{rem:polar_characterization} if we show that $U_11_{H\setminus H_0}=0$, which in turn follows if $P_t1_{H\setminus H_0}=0$ for $t>0$ a.e.
To check the latter equality, notice first that by \eqref{eq:V_1} we have
\begin{equation}
\int_0^\infty e^{-qt} \int_H \sum\limits_{i\geq 1} |\lambda_i|^{a}\left(|\langle y,e_i\rangle|^p\wedge 1\right)\;\mu_t(dy)\;dt<\infty,
\end{equation}
therefore, $\mu_t(H\setminus H_0)=0$ for $t>0$ a.e.
Furthermore, we have that $T_t(H)\subset H_0$ for all $t>0$, since for $t>0$, $x\in H$ and $\mathcal{I}:=\{i\geq 1 : |\langle x,e_i\rangle|\vee \left(|\lambda_i|^{a} e^{pt\lambda_i}\right) \geq 1\}$ we have $|\mathcal{I}|<\infty$ and
\begin{align}
   \sum\limits_{i\geq 1} |\lambda_i|^{a}|\langle T_tx,e_i\rangle|^p&= \sum\limits_{i\geq 1} |\lambda_i|^{a}|\langle x,e_i\rangle|^p e^{pt\lambda_i}=\sum\limits_{i\in \mathcal{I}} |\lambda_i|^{a}|\langle x,e_i\rangle|^p e^{pt\lambda_i}+\sum\limits_{i\notin \mathcal{I}} |\lambda_i|^{a}|\langle x,e_i\rangle|^p e^{pt\lambda_i}\\
   &\leq \sum\limits_{i\in \mathcal{I}} |\lambda_i|^{a}|\langle x,e_i\rangle|^p e^{pt\lambda_i}+\sum\limits_{i\notin \mathcal{I}} |\langle x,e_i\rangle|^2<\infty.
\end{align}
Consequently, for $t>0$ a.e. we have
\begin{align}
P_t1_{H\setminus H_0}(x)=\int_H 1_{H\setminus H_0}(T_tx+y)\;\mu_t(dy)=\int_{H_0} 1_{H\setminus H_0}(T_tx+y)\;\mu_t(dy)=0, \quad x\in H, 
\end{align}
hence $H\setminus H_0$ is polar.

\medskip
(iii). Recall that 
\begin{align}
    V(x)&=\sum\limits_{i\geq 1} |\lambda_i|^{1+a} \int_0^\infty e^{-qt} \int_H |\langle T_tx+y,e_i\rangle|^p\wedge 1\;\mu_t(dy)\;dt+\sum\limits_{i\geq 1} \gamma_i\int_0^\infty e^{-qt} \int_H |\langle T_tx+y,e_i\rangle|^{\gamma}\;\mu_t(dy)\;dt\\
  \intertext{and by \eqref{eq:lower_bound} we also have}  
    &\geq \widetilde{c}\sum\limits_{i\geq 1} |\lambda_i|^a\left(|\langle x, e_i\rangle|^p\wedge 1\right) -\widetilde{c} +\sum\limits_{i\geq 1} \gamma_i\int_0^\infty e^{-qt} \int_H |\langle T_tx+y,e_i\rangle|^{\gamma}\;\mu_t(dy)\;dt\\
    &\geq \widetilde{c}\sum\limits_{i\geq 1} |\lambda_i|^a\left(|\langle x, e_i\rangle|^p\wedge 1\right) -\widetilde{c}+\sum\limits_{i\geq 1} \gamma_i\int_0^\infty e^{-qt} |\langle T_tx,e_i\rangle|^{\gamma}\;dt-c_2\\
    &= \widetilde{c}\sum\limits_{i\geq 1} |\lambda_i|^a\left(|\langle x, e_i\rangle|^p\wedge 1\right) -\widetilde{c}+\sum\limits_{i\geq 1} \gamma_i|\langle x, e_i\rangle|^\gamma(q-\gamma\lambda_i)^{-1}-c_2.\label{eq:lower_bound_2}
\end{align}
Note that $\mathop{\inf}\limits_{i\geq 1}(q-\gamma\lambda_i)=q-\gamma\lambda_1> 0$ by assumption. 
Further, by contradiction, assume there exists $(x_n)_n\subset H$ such that $ \|x_n\|\mathop{\rightarrow}\limits_{n}\infty$ and $\mathop{\sup}\limits_n V(x_n)<\infty$.
Then, by \eqref{eq:lower_bound_2} and since $\lim\limits_{i\to\infty}|\lambda_i|=\infty$, there must be an $L\geq 1$ such that $|\langle x_{n},e_i\rangle|< 1$ for all $i\geq L$ and $n\geq 1$.
Then
\begin{align}
\|x_{n}\|^2 &= \sum\limits_{i <L} |\langle x_{n}, e_i\rangle|^2+ \sum\limits_{i\geq L} |\lambda_i|^{-2a /p}|\lambda_i|^{2a /p}|\langle x_{n}, e_i\rangle|^2\wedge 1, \\
\intertext{hence by H\"older inequality for $p\geq 2, (p/2)^{-1}+(p/(p-2))^{-1}=1$,  we get}
\|x_{n}\|^2 &\leq \sum\limits_{i<L} |\langle x_{n}, e_i\rangle|^2+ \left(\sum\limits_{i\geq 1}\frac{1}{|\lambda_i|^{\frac{2a}{p-2}}}\right)^{(p-2)/p} \left(\sum\limits_{i\geq L} |\lambda_i|^a|\langle x_{n}, e_i\rangle|^p\wedge 1\right)^{2/p}\\
&\leq \sum\limits_{i<L}|\langle x_n,e_i\rangle|^{2-\gamma}\frac{1}{\gamma_L}(q-\gamma \lambda_1) \sum\limits_{i\geq 1} \gamma_i|\langle x_n, e_i\rangle|^\gamma(q-\gamma \lambda_i)^{-1}\\
&\quad +\left(\sum\limits_{i\geq 1}\frac{1}{|\lambda_i|^{\frac{2a}{p-2}}}\right)^{(p-2)/p} \left(\sum\limits_{i\geq 1} |\lambda_i|^a|\langle x_{n}, e_i\rangle|^p\wedge 1\right)^{2/p}.\\
\intertext{Thus by \eqref{eq:lower_bound_2}}
&\|x_{n}\|^2 \in \mathcal{O}\left(\sum\limits_{i<L}|\langle x_n,e_i\rangle|^{2-\gamma} V(x_n)+V(x_n)^{1/p}\right)=\mathcal{O}\left(\sum\limits_{i<L}|\langle x_n,e_i\rangle|^{2-\gamma}\right),
\end{align}
which is a contradiction, and hence the assertion is proved.

\medskip
iv). If $f\in {\sf Lip}_b(H)$, then
\begin{align*}
    \left|\alpha U_\alpha f(x)-f(x)\right|
    &\leq |f|_{\sf Lip}\alpha\int_0^\infty e^{-\alpha t} \|T_tx+y-x\|\wedge (2|f|_{\infty}) \mu_t(dt)\\
    &\leq |f|_{\sf Lip}\alpha\int_0^\infty e^{-\alpha t} \|T_tx-x\| dt+ |f|_{\sf Lip} \alpha\int_0^\infty e^{-\alpha t}\int_H \|y\|\wedge(2|f|_\infty) \mu_t(dy) dt.
\end{align*}
By the weak continuity of $(\mu_t)_{t\geq 0}$ and the initial value theorem, the second term in the right-hand side of the above inequality tends to zero independently of $x\in H$.
For the first term, letting $\alpha> 3\lambda_1$, we proceed as follows:
\begin{align*}
    \alpha\int_0^\infty e^{-\alpha t} \|T_tx-x\| dt 
    &\leq \left(\alpha\int_0^\infty e^{-\alpha t} \|T_tx-x\|^2 dt\right)^{1/2}=\left(\sum_{i\geq 1} \langle x, e_i\rangle^2 \alpha\int_0^\infty e^{-\alpha t} \left(e^{\lambda_i t}-1\right)^2 dt\right)^{1/2}\\
    &=\left(\sum_{i\geq 1} \langle x, e_i\rangle^2 \frac{2\lambda_i^2}{(\alpha-2\lambda_i)(\alpha-\lambda_i)}\right)^{1/2}\\
    \intertext{so that by H\"older inequality we get}
    \alpha\int_0^\infty e^{-\alpha t} \|T_tx-x\| dt 
    &\leq \left(\sum_{i\geq 1} |\langle x, e_i\rangle|^p |\lambda_i|^a\right)^{1/2p} \left(\sum_{i\geq 1} \frac{\left[2|\lambda_i|^{\left(2-\frac{2a}{p}\right)}\right]^\frac{p}{p-2}}{\left[(\alpha-2\lambda_i)(\alpha-\lambda_i)\right]^{\frac{p}{p-2}}}\right)^{\frac{p-2}{2p}}\\
    \intertext{Now, fixing $R>0$, by arguing as immediately after \eqref{eq:lower_bound_2}, there exists $L\geq 1$ such that $|\langle x,e_i\rangle|\leq 1$ for all $i\geq L$ and $x\in [V\leq R]$. Also, by (iii) we have that $\sup_{x\in [V\leq R]}\|x\|<\infty$, and using \eqref{eq:lower_bound} one can easily deduce that there exists a constant $C<\infty$ such that the first factor in the right-hand side of the above inequality is bounded by $C$. Thus,}
    \sup_{x\in [V\leq R]}\left(\alpha\int_0^\infty e^{-\alpha t} \|T_tx-x\| dt \right)
    &\leq C \left(\sum_{i\geq 1} \frac{\left[2|\lambda_i|^{\left(2-\frac{2a}{p}\right)}\right]^\frac{p}{p-2}}{\left[(\alpha-2\lambda_i)(\alpha-\lambda_i)\right]^{\frac{p}{p-2}}}\right)^{\frac{p-2}{2p}}\\
    &\leq C \left(\sum_{i\geq 1} \frac{\left[2|\lambda_i|^{\left(2-\frac{2a}{p}\right)}\right]^\frac{p}{p-2}}{(\alpha^2+2\lambda_i^2)^{\frac{p}{p-2}}}\right)^{\frac{p-2}{2p}}.\\
    \intertext{Consequently, iv) follows by letting $\alpha\to \infty$ and employing dominated convergence, since}
    \sup_{\alpha>0} \frac{\left[2|\lambda_i|^{\left(2-\frac{2a}{p}\right)}\right]^\frac{p}{p-2}}{(\alpha^2+2\lambda_i^2)^{\frac{p}{p-2}}}
    &\leq  \frac{1}{|\lambda_i|^{\frac{2a}{p-2}}}, i\geq 1, \quad \mbox{ and } \quad \sum_{i\geq 1} \frac{1}{|\lambda_i|^{\frac{2a}{p-2}}} <\infty \mbox{ by } \eqref{eq:V_2}.
\end{align*}

v). Let $0\leq f_k\leq 1,$ $k\geq 1$,  be Lipschitz funtions on $H$ such that they generate $\tau_{\|\|}$ and separate all finite measures on $H$.
Further, consider $\mathcal{A}:=\{\alpha U_\alpha f_k :  k\geq 1, q<\alpha \in \mathbb{Q}_+\}\cup\{1+U_q(v\wedge k), k\geq 1\}$, and note that by (ii) we have that the topology $\tilde\tau$ induced by $\mathcal{A}$ is a $\mathcal{U}$-topology on $H$; take $1+V=U_q(1/q+v)$ with $V$ as in the statement, instead of $V$ in \Cref{def:quasi-U}.
Furthermore, according to \Cref{prop:nest} we have that $F_n:=[V\leq n],$ $n\geq 1$,  forms a $\mathcal{U}$-nest.
Also, since by monotone convergence we have $F_n=\cap_{k\geq 1}[U_q(v\wedge k)\leq n]$, it follows that $F_n$ is $\tau_{\|\|,V}$-closed and $\tilde\tau$-closed as well, $n\geq 1$.
To conclude, it remains to show that when restricted to each $F_n$, the two topologies $\tau_{\|\|,V}$ and $\tilde\tau$ coincide. 
To this end, first notice that by Feller property \Cref{prop: regularity_Mehler}, (i), we have that $\tilde\tau \subset \tau_{\|\|,V}$.
Then, by uniform convergence guaranteed in (iv), we have that $\tau_{\|\|}|_{F_n} \subset \tilde\tau|_{F_n}$, and hence $\tau_{\|\|, V}|_{F_n} \subset \tilde\tau|_{F_n}$, $n\geq 1$. 
Consequently, the two topologies coincide on $F_n,n\geq 1$.
\end{proof}

\begin{prop}\label{prop:tau_w:quasi-U}
   If $A$ is bounded or  $\mathbf{(H_{A,\mu})}$ is fulfilled, then the weak topology $\tau_w$ is a quasi-$\mathcal{U}$-topology on $H$. 
\end{prop}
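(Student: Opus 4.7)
The plan is to verify hypothesis $\mathbf{H_{V,\tau_w}}$ in both scenarios and then invoke Proposition \ref{prop:V_quasy_ray}, whose conclusion is precisely that $\tau_w$ is a quasi-$\mathcal{U}$-topology with the natural nest produced by $V$.

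The Lyapunov function has already been built: in the bounded-drift case I take $V$ from Proposition \ref{prop:tau_||}(ii) (a $\tau_w$-compact $1$-Lyapunov function vanishing at infinity), and under $\mathbf{(H_{A,\mu})}$ I take $V$ from Proposition \ref{prop:V_H_mu_1}(iii) (a norm-like $\tau_w$-compact $q$-Lyapunov function). In both cases the relevant level sets are weakly compact, and hence norm-bounded in $H$; by separability, the weak topology is metrizable on each such bounded set, so that sequential weak continuity coincides with weak continuity on their weak closures (the closures themselves coincide with the level sets, being weakly compact and hence weakly closed).

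For the separating family, let $(h_k)_{k\geq 1}$ be countable and norm-dense in $H$, and set
\begin{equation*}
f_{2k-1}(x):=\cos\langle x, h_k\rangle, \qquad f_{2k}(x):=\sin\langle x, h_k\rangle, \qquad k\geq 1.
\end{equation*}
Each $f_k$ lies in $C_b^{\sigma w}(H)$, and the family separates finite measures: for any finite measure $\nu$ on $H$, the characteristic functional $h\mapsto \int_H e^{i\langle x,h\rangle}\nu(dx)$ is norm-continuous by dominated convergence, hence determined by its values on the dense set $(h_k)$, and these in turn determine $\nu$ by uniqueness of Fourier transforms on a separable Hilbert space. Proposition \ref{prop: regularity_Mehler}(ii) then gives $U_\beta f_k \in C_b^{\sigma w}(H)$ for every $\beta>0$, so $U_\beta f_k$ is weakly continuous on each weakly metrizable level set of $V$.

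Assumption $\mathbf{H_{\mathcal{C}}}$ is satisfied with $\mathcal{C}={\rm Lip}_b(H)$ thanks to Proposition \ref{prop: regularity_Mehler}(i) (invariance under $P_t$, together with joint continuity at $t=0$, yields $\alpha U_\alpha f \to f$ pointwise as $\alpha\to\infty$). The three conditions of $\mathbf{H_{V,\tau_w}}$ follow: (i) $|f_k|\leq 1$ matches both the norm-like case (with $c_1=0$) and the vanishing-at-infinity case (since the $f_k$ are bounded); (ii) is the separation of measures established above; (iii) is the continuity of $U_\beta f_k$ on the closures of the level sets. Proposition \ref{prop:V_quasy_ray} then delivers the claim. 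The chief subtlety is the interplay between the two topological levels: $\tau_w$ is non-metrizable on $H$ as a whole, but weak compactness of the level sets of $V$ forces them to be norm-bounded, which is exactly what converts the sequential weak continuity furnished by Proposition \ref{prop: regularity_Mehler}(ii) into genuine weak continuity on the nest.
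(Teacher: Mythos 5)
Your argument is correct, and it reaches the conclusion by a slightly different route than the paper. The paper checks Definition \ref{def:quasi-U} directly: it takes a countable, bounded, weakly continuous, measure-separating family $\mathcal{A}$, lets $\tilde\tau$ be the $\mathcal{U}$-topology generated by $U_1(\mathcal{A})$, and shows via \Cref{prop: regularity_Mehler}(ii) and Stone--Weierstrass that $\tilde\tau$ agrees with $\tau_w$ on the closed balls $\overline{B(0,n)}$, which form a $\mathcal{U}$-nest because the Lyapunov functions of \Cref{prop:tau_||}(ii) resp.\ \Cref{prop:V_H_mu_1}(iii) blow up (resp.\ vanish) at infinity in norm. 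You instead verify $\mathbf{H_{V,\tau_w}}$ in full and invoke \Cref{prop:V_quasy_ray}, obtaining the nest $(\overline{[V\le n]}^{\,\tau_w})$ resp.\ $(\overline{[V\ge 1/n]}^{\,\tau_w})$ rather than closed balls. The core ingredients are identical (sequential weak continuity of $U_\beta f$ for weakly continuous $f$, weak compactness of the level sets forcing norm-boundedness, metrizability of $\tau_w$ on bounded sets of a separable $H$, Stone--Weierstrass hidden inside \Cref{prop:V_quasy_ray}); what your packaging buys is that the verification of $\mathbf{H_{V,\tau_w}}$ is exactly what \Cref{thm:Mehler} needs anyway, so nothing is done twice. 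Two cosmetic blemishes: $\mathbf{H_{\mathcal{C}}}$ asks for a cone of \emph{non-negative} functions stable under \emph{all} $U_\alpha$, and $P_t$ inflates Lipschitz constants by $Me^{\omega_0 t}$, so $U_\alpha({\rm Lip}_b)\subset {\rm Lip}_b$ is only clear for $\alpha>\omega_0$; taking $\mathcal{C}=pC_b(H)$ as the paper does avoids this. Also, the parenthetical claim that the level sets of $V$ coincide with their weak closures is unjustified ($V$ need not be weakly lower semicontinuous, so $[V\le n]$ need not be weakly closed), but it is also unused: all you need is that the weak closure of a relatively weakly compact set is weakly compact, hence bounded and weakly metrizable.
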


\begin{proof}
Let $\mathcal{A}\subset p\mathcal{B}(H)$ be any countable family of weakly continuous functions on $H$ whose absolute values are less than $1$, and which separates all finite measures on $H$.
Further, let $\tilde{\tau}$ be the $\mathcal{U}$-topology on $H$ induced by $U_1(\mathcal{A})$ as in \Cref{def:quasi-U}, taking e.g. $V=1$.
By \Cref{prop: regularity_Mehler}, (ii), we have that $U_1(\mathcal{A})$ consists of sequentially weakly continuous functions on $H$, and since $U_1(\mathcal{A})$ separates the points of $H$, whilst the closed balls are $\tau_w$-compact, by Stone-Weierstress theorem we deduce that $\tilde{\tau}$ coincides with $\tau_w$ on every closed ball of radius $n$ in $H$, denoted further by $\overline{B(0,n)}, n \geq 1$. 
Also, by \Cref{prop:tau_||},(ii) if $A$ is bounded, or by \Cref{prop:V_H_mu_1}, (iii) if $\mathbf{(H_{A,\mu})}$ is fulfilled, and using \Cref{prop:nest}, we deduce that $\overline{B(0,n)},n\geq1$,  form a $\mathcal{U}$-nest of $\tau_w$-compact (and hence $\tilde{\tau}$ closed) sets. Therefore, we conclude that $\tau_w$ is a quasi-$\mathcal{U}$-topology on $H$.    
\end{proof}

We have now all the ingredients to present the main result concerning generalized Mehler semigroups.

\begin{thm}\label{thm:Mehler}
Let $(P_t)_{t\geq 0}$ be the generalized Mehler semigroup constructed by \eqref{eq: Mehler}, namely
\begin{equation*}
    P_tf(x) = \int_H f(T_t x +y) \; \mu_t(dy), \quad x\in H, f\in b\mathcal{B}(H),
\end{equation*} 
and assume that either the generator ${\sf A}$ of $(T_t)_{t\geq 0}$ is bounded or condition $\mathbf{(H_{A,\mu})}$ is satisfied.
Then there exists a Hunt Markov process $X$ on $H$ with respect to the norm topology $\tau_{\|\|}$, having transition function $(P_t)_{t\geq 0}$.
Moreover, the following regularity properties hold:
\begin{enumerate}[(i)]
    \item If the generator ${\sf A}$ of $(T_t)_{t\geq 0}$ is bounded, then the Hunt process $X$ is saturated in the sense of \Cref{defi:saturated}.
    \item If condition $\mathbf{(H_{A,\mu})}$ is satisfied and $H_0\subset H$ is given by \eqref{eq:H_0}, then
    \begin{equation}\label{eq:smoothing}
        \mathbb{P}^x\left(\left\{X(t)\in H_0 \mbox{ for all } t>0\right\}\right)=1 \quad \mbox{ for all } x\in H.
    \end{equation}
\end{enumerate}
\end{thm}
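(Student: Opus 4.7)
The plan is to derive \Cref{thm:Mehler} by applying the general construction result \Cref{thm:main_construction} with $\tau=\tau_w$, using the Lyapunov functions from \Cref{prop:tau_||}, (ii) (in the bounded-generator case) and \Cref{prop:V_H_mu_1} (under $\mathbf{(H_{A,\mu})}$), and then transferring all the regularity from $\tau_w$ to $\tau_{\|\|}$ via the stability of the Hunt property under changes of quasi-$\mathcal{U}$-topologies.

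First I verify condition $\mathbf{H_{V,\tau_w}}$. The weak compactness of the level sets $[V\leq n]$ (resp.\ $[V\geq 1/n]$) is exactly what the cited propositions provide. For the separating family in hypotheses (i)-(ii), I fix any countable collection $(f_k)_{k\geq 1}\subset C_b^{\sigma w}(H)$ of bounded sequentially weakly continuous functions separating the finite measures on $H$; such a collection is easily exhibited, e.g.\ via truncated real and imaginary parts of $x\mapsto e^{i\langle x,v\rangle}$ for $v$ in a countable dense subset of $H$. Boundedness of the $f_k$ takes care of hypothesis (i) in both the norm-like and the vanishing-at-infinity case. For (iii), \Cref{prop: regularity_Mehler}, (ii) together with dominated convergence inside the resolvent integral yields $U_\beta f_k\in C_b^{\sigma w}(H)$; since every $\tau_w$-compact level set of $V$ is bounded, hence contained in a $\tau_w$-metrizable closed ball of $H$ (by separability), sequential $\tau_w$-continuity promotes to genuine $\tau_w$-continuity on the $\tau_w$-closure of each level set. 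Consequently, \Cref{thm:main_construction}, (i) furnishes a right Markov process $X$ on $H$ with resolvent $\mathcal{U}$, whose paths are $\mathbb{P}^x$-a.s.\ c\`adl\`ag with respect to $\tau_w$.

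The second step upgrades $X$ to a Hunt process in the norm topology. Since $\tau_w$ is itself a quasi-$\mathcal{U}$-topology by \Cref{prop:tau_w:quasi-U}, \Cref{thm:main_construction}, (ii) applied with $\tau=\tau_w$ guarantees that $X$ is a Hunt process with respect to every quasi-$\mathcal{U}$-topology on $H$. In the bounded-generator case, \Cref{prop:tau_||}, (iv) identifies $\tau_{\|\|}$ itself as a $\mathcal{U}$-topology, hence quasi-$\mathcal{U}$, and the Hunt property with respect to $\tau_{\|\|}$ follows at once. Under $\mathbf{(H_{A,\mu})}$, \Cref{prop:V_H_mu_1}, (v) provides a quasi-$\mathcal{U}$-topology $\tau_{\|\|,V}$ that is finer than $\tau_{\|\|}$, so the Hunt property of $X$ with respect to $\tau_{\|\|,V}$ passes automatically to the coarser $\tau_{\|\|}$, since convergence in a finer topology implies convergence in a coarser one (and the same for $\sigma$-algebras of stopping-time limits).

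The two extra regularity assertions come essentially for free. In the bounded case the Lyapunov function of \Cref{prop:tau_||} vanishes at infinity, so \Cref{thm:main_construction}, (iii) directly yields the saturation of $X$, proving (i). For (ii), \Cref{prop:V_H_mu_1}, (ii) states that $H\setminus H_0=[V=\infty]$ is polar for $\mathcal{U}$; by Hunt's probabilistic characterization of polar sets recalled in \Cref{rem:polar_characterization}, (ii), this is the same as $\mathbb{P}^x(T_{H\setminus H_0}<\infty)=0$ for every $x\in H$, with $T_{H\setminus H_0}=\inf\{t>0:X(t)\in H\setminus H_0\}$, which is exactly \eqref{eq:smoothing}. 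The only genuinely delicate point in the entire assembly---the place where the careful machinery of \Cref{S:Resolvents} and \Cref{S:Hunt} pays off---is the verification of $\mathbf{H_{V,\tau_w}}$, whose two critical inputs are \Cref{prop: regularity_Mehler}, (ii) (so that the resolvent preserves sequential weak continuity) and the metrizability of weak compacts in a separable Hilbert space (which turns the cheap sequential continuity into actual continuity on the level sets).
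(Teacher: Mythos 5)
Your proposal is correct and follows essentially the same route as the paper: apply \Cref{thm:main_construction} with $\tau=\tau_w$ using the Lyapunov functions of \Cref{prop:tau_||} and \Cref{prop:V_H_mu_1}, verify $\mathbf{H_{V,\tau_w}}$ via a bounded, weakly continuous, measure-separating family $(f_k)$ and the sequential weak continuity of the resolvent, and then transfer the Hunt property from $\tau_w$ to $\tau_{\|\|}$ through the quasi-$\mathcal{U}$-topology results, with saturation and the polarity of $H\setminus H_0$ handled exactly as in the paper. The only cosmetic omission is that you do not explicitly record that $(\mathbf{H_{\mathcal{C}}})$ holds with $\mathcal{C}=C_b(H)$ (an immediate consequence of \Cref{prop: regularity_Mehler}, (i)), which is formally part of $\mathbf{H_{V,\tau_w}}$.
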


\begin{proof}
First of all, by \Cref{prop: regularity_Mehler} it follows that condition $(\mathbf{H_\mathcal{C}})$ is satisfied for $\mathcal{C}=C_b(H)$.
Then, the idea is to apply \Cref{thm:main_construction} with $E=H$ and $\tau=\tau_w$.
To this end, let $V$ be given either by \eqref{eq:V_0} if $A$ is bounded, or by \eqref{eq:V_H_mu_1} if $\mathbf{(H_{A,\mu})}$ is fulfilled. 
Let us check that condition $\mathbf{(H_{V,\tau_w})}$ required in \Cref{thm:main_construction} is satisfied. First of all, by \Cref{prop:tau_||}, (ii) if $A$ is bounded, and by \Cref{prop:V_H_mu_1}, (iii) if $\mathbf{(H_{A,\mu})}$ is fulfilled, we have that $V$ is a $\tau_w$-compact $q\vee 1$-Lyapunov function.
Now, let us check the the rest of the conditions $(\mathbf{H_{V,\tau_w}})$(i)-(iii): Let $(f_k)_{k\geq 1} \subset \mathcal{B}_b$ be any family of weakly continuous functions on $H$ which separates all finite measures on $H$.
Since $V$ is bounded, $(\mathbf{H_{V,\tau_w}})$(i) and (ii) are trivial.
Further, let $\beta > q\vee 1$, and recall that by \Cref{prop: regularity_Mehler} we have that $U_\beta f_k$ is weakly sequentially continuous on $H$, hence $\tau_w$-continuous on each $\tau_w$-compact set $F_n:=\overline{[V\geq 1/n]}^{\tau_w}$ if $A$ is bounded, and on each $F_n:=\overline{[V\leq n]}^{\tau_w}$ if $\mathbf{(H_{A,\mu})}$ is fulfilled, for all $k,n\geq 1$; thus, (iii) also holds. 
Consequently, we can apply \Cref{thm:main_construction} to ensure the existence of a Hunt Markov process $X$ on $H$ with respect to the weak topology, with transition function $(P_t)_{t\geq 0}$; moreover, if $A$ is bounded, then $X$ is saturated since in this case, by \Cref{prop:tau_||}, (ii), we have that $V$ is vanishing at infinity (in the sense of \Cref{defi 3.1}, (ii)). 

The fact that $X$ is in fact a Hunt process with respect to the norm topology follows by \Cref{thm:main_construction}, since $\tau_w$ is a quasi-$\mathcal{U}$-topology according to \Cref{prop:tau_w:quasi-U}, whilst $\tau_{\|\|}$ is (coarser than) a quasi-$\mathcal{U}$-topology, according to \Cref{prop:tau_||}, (iv), and \Cref{prop:V_H_mu_1}, (v), respectively.

Finally, by \Cref{prop:V_H_mu_1}, (ii) we have that $H\setminus H_0$ is polar, so relation \eqref{eq:smoothing} follows by the probabilistic characterization of polarity expressed in \Cref{rem:polar_characterization}, (ii). 
\end{proof}

\subsection{Application to stochastic heat equations with cylindrical L\'evy noise}\label{Ss:application}

In this subsection we apply the previously obtained results, more precisely \Cref{thm:Mehler}, in order to study the following SPDE on $H:= L^2(D)$, where $D$ is a bounded domain in $\mathbb{R}^d$:
\begin{equation}\label{SDE}
    dX(t)=\Delta_0 X(t) dt + dY(t), t>0, \quad X(0)=x\in H,
\end{equation}
where $\Delta_0$ is the Dirichlet Laplacian on $D$, whilst $Y$ is a L\'evy Noise on $H$ whose characteristic exponent $\lambda:H\rightarrow \mathbb{C}$ is given by
\begin{equation}\label{eq:lambda_example}
    \lambda(\xi):=\|\Sigma_1 \xi\|_{L^2(D)}^2+ \|\Sigma_2 \xi\|_{L^2(D)}^\alpha, \quad \xi \in H.
\end{equation}
Here, $\Sigma_1$ and $\Sigma_2$ are positive definite bounded linear operators on $H$, whilst $\alpha\in (0,2)$. 

The aim is to construct an $H$-valued Hunt Markov solution to problem \eqref{SDE}, in the following sense: First we construct the generalized Mehler semigroup associated to \eqref{SDE}, for which we then show that it is the transition function of a Hunt Markov process on $H$.
To this end, we follow the strategy adopted in \cite[Section 8]{LeRo04}. 
For comparison, we emphasize that in contrast to the just mentioned work by which the process can be constructed merely on an extended Hilbert space (see \cite[Corollary 8.2]{LeRo04}), the process to be constructed here lives on the original state space $H$. 

Let $A=\Delta_0$ and $(T_t)_{t\geq 0}$ be the strongly continuous semigroup on $H=L^2(D)$ generated by $A$. 
Furthermore, let $(\lambda_k,e_k)_{k\geq 1}$ be an eigenbasis of $H$, so that $Ae_k=\lambda_ke_k, k\geq 1$, and $0>\lambda_1\geq \lambda_2\geq \dots$ are the eigenvalues of $A$. 
Recall that by Weyl's asympthotics there exists a constant $C\in (0,1)$ such that
\begin{equation}
-1/C k^{2/d} \leq \lambda_k\leq -C k^{2/d}, \quad k\geq 1.     
\end{equation}
Furthermore, we set 
\begin{equation*}
    \sigma_{i,k}:=\|\Sigma_i e_k\|, \quad 1\leq i\leq 2, k\geq 1, 
\end{equation*}
and we assume the following growth
\begin{equation}
    0\leq \sigma_{1,k}\lesssim k^{\gamma_1} \quad \mbox{and} \quad  0\leq \sigma_{2,k}\lesssim k^{\gamma_2}, \quad k\geq 1,
\end{equation}
for some $\gamma_i\in \mathbb{R}, 1\leq i\leq 2$, on which further restrictions shall be imposed.
More precisely, we introduce the following condition:

\medskip
\noindent
$\mathbf{(H_\Sigma)}$. There exist $a>0, p>2$ such that
\begin{equation}
    \frac{2(1+a)-p}{d}+p\gamma_1<-1,\quad \frac{2a}{d}+\alpha\gamma_2<-1, \quad \mbox{ and } \quad \frac{4a}{d(p-2)}>1. 
\end{equation}

\begin{coro}\label{coro:application}
The following assertions hold:  
\begin{enumerate}[(i)]
    \item Assume that 
    $\gamma_i<1/d-1/2, \; 1\leq i\leq 2$.
    Then there exits a family of (Borel) probability measures $(\mu_t)_{t\geq 0}$ on $H$ such that $\lim\limits_{t\to 0}\mu_t=\mu_0=\delta_0$, and
    \begin{equation}
        \hat{\mu}_t(\xi):=\int_H e^{i\langle \xi, y\rangle} \mu_t(dy)=e^{\int_0^t \lambda( T_s^\ast \xi) ds}, \quad \xi\in H, t\geq 0,
    \end{equation}
    where $\lambda$ is given by \eqref{eq:lambda_example}.
    \item If $\mathbf{(H_\Sigma)}$ is fulfilled then there exists a Hunt process $X$ on $H$ whose transition function is the generalized Mehler semigroup induced by $(T_t)_{t\geq 0}$ and $(\mu_t)_{t\geq 0}$, as defined by \eqref{eq: Mehler}. 
    Moreover,
    \begin{equation}
        \mathbb{P}^x\left(\left\{X(t)\in H_0 \mbox{ for all } t>0\right\}\right)=1 \quad \mbox{ for all } x\in H,
    \end{equation}
    where $H_0\subset H$ is given by \eqref{eq:H_0} with $a$ and $p$ given in $\mathbf{(H_\Sigma)}$, and all the properties (i)-(v) from \Cref{prop:V_H_mu_1} hold.
    \item If $\gamma_1<1/d-1/2$ and $\gamma_2<-1/\alpha$, then there exists a Hunt process $X$ on $H$ whose transition function is the generalized Mehler semigroup induced by $(T_t)_{t\geq 0}$ and $(\mu_t)_{t\geq 0}$ given by \eqref{eq: Mehler}.
\end{enumerate}
\end{coro}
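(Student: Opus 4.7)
The common backbone of all three parts is to reduce the statement to \Cref{thm:Mehler}: in part (i) I construct the convolution family $(\mu_t)_{t\geq 0}$ explicitly, while in (ii)--(iii) I verify $\mathbf{(H_{A,\mu})}$ so that \Cref{thm:Mehler} applies. Since $A = \Delta_0$ is self-adjoint with eigenbasis $(e_k)$, and since $\Sigma_1,\Sigma_2$ enter through the scalars $\sigma_{i,k} = \|\Sigma_i e_k\|$ only, the verifications reduce to sums over $k$ that are controlled via Weyl's asymptotics $|\lambda_k| \asymp k^{2/d}$.

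\emph{Part (i).} I decompose $\lambda = \lambda_G + \lambda_S$ with $\lambda_G(\xi) := \|\Sigma_1 \xi\|^2$ and $\lambda_S(\xi) := \|\Sigma_2 \xi\|^\alpha$, and define $\mu_t := \mu_t^G \ast \mu_t^S$. For the Gaussian factor, $\mu_t^G$ is the centred Gaussian on $H$ with covariance $Q_t^G := \int_0^t T_s \Sigma_1^\ast \Sigma_1 T_s^\ast\, ds$; diagonalising in $(e_k)_{k\geq 1}$,
\[
\mathrm{tr}(Q_t^G) = \sum_{k\geq 1} \sigma_{1,k}^2 \, \frac{1 - e^{2\lambda_k t}}{-2\lambda_k} \lesssim \sum_{k\geq 1} k^{2\gamma_1 - 2/d},
\]
which is finite precisely when $\gamma_1 < 1/d - 1/2$. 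For the stable factor I apply a Minlos--Sazonov criterion in the form used in \cite[Section 8]{LeRo04}: the prescribed Fourier formula defines a Borel probability measure on $H$ provided a suitable $\alpha$-type summability of the scales $\sigma_{2,k}^\alpha (1 - e^{\alpha \lambda_k t})/(-\alpha \lambda_k)$ holds, which under $\gamma_2 < 1/d - 1/2$ again reduces to a convergent series by Weyl. Weak continuity at $t = 0$ and the convolution identity $(\mathrm{M}_2)$ are immediate from the explicit Fourier representation.

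\emph{Part (ii).} Under $\mathbf{(H_\Sigma)}$ I apply \Cref{thm:Mehler} in the unbounded-generator regime with the exponents $p, a$ supplied by $\mathbf{(H_\Sigma)}$ and some $q > 0$ (any such $q$ satisfies $q > p\lambda_1$ since $\lambda_1 < 0$). Property (i) of $\mathbf{(H_{A,\mu})}$ is the spectral decomposition of $\Delta_0$, and (iii) follows upon choosing any $\gamma \in (0, \alpha)$, since the one-dimensional marginal $\langle Y(t), e_k\rangle$ is the independent sum of a Gaussian with variance $\asymp \sigma_{1,k}^2/|\lambda_k|$ and a symmetric $\alpha$-stable with scale $\asymp \sigma_{2,k}^\alpha/|\lambda_k|$, so its $\gamma$-th moment is uniformly bounded in $k$. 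The core point is \eqref{eq:V_1}: using the scaling estimate $\mathbb{E}[|X|^p \wedge 1] \lesssim \sigma^\alpha$ for a symmetric $\alpha$-stable of scale $\sigma$ (valid for $p > \alpha$) and the direct bound $\mathbb{E}[|X|^p \wedge 1] \leq \mathbb{E}|X|^p \lesssim (\sigma_{1,k}^2/|\lambda_k|)^{p/2}$ for the Gaussian,
\[
\int_0^\infty e^{-qt} \int_H |\langle y, e_k\rangle|^p \wedge 1 \, \mu_t(dy)\, dt \, \lesssim \, \frac{\sigma_{1,k}^p}{|\lambda_k|^{p/2}} + \frac{\sigma_{2,k}^\alpha}{|\lambda_k|}.
\]
Multiplying by $|\lambda_k|^{1+a}$, summing and substituting $|\lambda_k| \asymp k^{2/d}$, $\sigma_{i,k} \lesssim k^{\gamma_i}$ yields exactly the two inequalities $(2(1+a) - p)/d + p\gamma_1 < -1$ and $2a/d + \alpha\gamma_2 < -1$ of $\mathbf{(H_\Sigma)}$. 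The first series in \eqref{eq:V_2} is trivially convergent for $t > 0$, while the second becomes $\sum_k k^{-4a/(d(p-2))} < \infty$, equivalent to the third assumption $4a/(d(p-2)) > 1$. Hence \Cref{thm:Mehler} delivers the Hunt process on $H$, and the smoothing property \eqref{eq:smoothing} comes from \Cref{prop:V_H_mu_1}(ii).

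\emph{Part (iii) and main obstacle.} Under the weaker assumptions $\gamma_1 < 1/d - 1/2$ and $\gamma_2 < -1/\alpha$ one has $\sum_k \sigma_{1,k}^2/|\lambda_k| + \sum_k \sigma_{2,k}^\alpha < \infty$, so the noise $Y$ is in fact a genuine $H$-valued L\'evy process rather than only cylindrical. In this regime the spectral conditions of $\mathbf{(H_{A,\mu})}$ are verified by the same scheme as in (ii), but now with $a$ arbitrarily small and $p$ close to $2$, which bypasses the delicate balance imposed by $\mathbf{(H_\Sigma)}$. The most technical step of the whole proof is the quantitative control of the truncated $p$-th moment of a symmetric $\alpha$-stable by $\sigma^\alpha$ when $p > \alpha$, which forces $p$ to be chosen large and ultimately dictates the shape of $\mathbf{(H_\Sigma)}$ and the range of parameters $(\gamma_1, \gamma_2, d, \alpha)$ for which the construction succeeds on the original space $H = L^2(D)$.
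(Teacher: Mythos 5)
Your parts (i) and (ii) follow essentially the paper's route: (i) goes through Sazonov continuity of the integrated characteristic exponent as in \cite[Section 8]{LeRo04} (the paper reduces the $\alpha$-part to the quadratic one by H\"older, $N_2(\xi)\leq t^{1-\alpha/2}\bigl(\int_0^t\|\Sigma_2T_s^\ast\xi\|^2ds\bigr)^{\alpha/2}\lesssim\|V_2\xi\|^{\alpha}$ with $V_2$ Hilbert--Schmidt, whereas your ``$\alpha$-type summability of the scales'' is only meaningful when $\Sigma_2$ is diagonal in $(e_k)$ --- you should adopt the H\"older reduction for general $\Sigma_2$); and (ii) verifies $\mathbf{(H_{A,\mu})}$ exactly as you describe. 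Your estimate $\mathbb{E}[|X|^p\wedge 1]\lesssim\sigma^{\alpha}$ for the stable marginal is a legitimate, slightly cleaner alternative to the paper's device of passing to an exponent $\beta<\alpha$ close to $\alpha$, and you are right that both spectral inequalities of $\mathbf{(H_\Sigma)}$ come out of this computation.

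The genuine gap is in part (iii). You claim that under $\gamma_1<1/d-1/2$ and $\gamma_2<-1/\alpha$ one can verify $\mathbf{(H_{A,\mu})}$ for the combined noise ``with $a$ arbitrarily small and $p$ close to $2$''. This fails: combining the Gaussian constraint $\frac{2(1+a)-p}{d}+p\gamma_1<-1$ with $\frac{4a}{d(p-2)}>1$ forces $\gamma_1<1/d-1/2-2/(dp)$, which at $p$ near $2$ reads $\gamma_1<-1/2$ (up to $\varepsilon$), strictly stronger than the hypothesis $\gamma_1<1/d-1/2$; so the Gaussian part needs $p$ \emph{large}. On the other hand, the stable constraint $\frac{2a}{d}+\alpha\gamma_2<-1$ together with $a>d(p-2)/4$ caps $p$ at $-2\alpha\gamma_2<\infty$; so the stable part needs $p$ \emph{bounded}. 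No single pair $(a,p)$ covers the full range of $(\gamma_1,\gamma_2)$ allowed in (iii) --- this is precisely why (iii) is not a special case of (ii), and your closing remark that the stable estimate ``forces $p$ to be chosen large'' contradicts your own choice of $p$ near $2$. The paper's actual proof of (iii) proceeds by splitting: it applies part (ii) twice, once with $\Sigma_2=0$ (where $p$ can be taken large) and once with $\Sigma_1=0$ (where $a$ small and $p$ near $2$ suffice), obtaining two Hunt processes $X_{(1)},X_{(2)}$; it then takes independent copies started at $0$, forms $\tilde X^x(t)=T_tx+\tilde X_{(1)}(t)+\tilde X_{(2)}(t)$, which is a c\`adl\`ag simple Markov process with one-dimensional distributions given by $(P_t)_{t\geq 0}$, and upgrades it to a right, then Hunt, process via \cite[Proposition 2.1]{BeCiRo24a} and \Cref{rem:cadlag+Feller=Hunt}, (ii). This superposition/gluing argument is the missing idea in your proposal.
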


\begin{proof}
 (i). We follow \cite[Section 8]{LeRo04}, in particular it is enough to show that  for each $t> 0$, 
 \begin{equation*}
 N_1(\xi):=\int_0^t \|\Sigma_1T_s^\ast \xi\|^2 ds \quad \mbox{ and } \quad N_2(\xi):=\int_0^t \|\Sigma_2T_s^\ast \xi\|^\alpha ds, \quad \xi\in H,
 \end{equation*}
 are Sazonov continuous functionals on $H\equiv H^\ast$; see e.g. \cite{Saz58}, \cite{Li82}, or \cite{FuRo00} for details on the Sazonov topology. 
 To this end, it is enough to show that there exits $V_1,V_2$ Hilbert-Schmidt operators on $H$, and $\beta_1,\beta_2$ positive constants such that 
 \begin{equation}
    0\leq N_i(\xi) \lesssim \|V_i \xi\|^{\beta_i}, \quad \xi\in H, i=1,2.
 \end{equation}
Let us treat $N_1$ first:
\begin{align*}
    N_1(\xi)
    &=\int_0^t \|\sum\limits_{k\geq 1}e^{s\lambda_k}\langle\xi,e_k\rangle\sigma_{1,k} e_k\|^2 ds= \int_0^t \left(\sum\limits_{k\geq 1}e^{2s\lambda_k}\sigma_{1,k}^2\langle\xi,e_k\rangle^2\right) ds\\
    &\leq \sum\limits_{k\geq 1}\int_0^t e^{-2sCk^{2/d}} ds \;  c_1 k^{2\gamma_1}\langle\xi,e_k\rangle^2\leq \sum\limits_{k\geq 1}c_1 \frac{1}{2Ck^{2/d}} k^{2\gamma_1}\langle\xi,e_k\rangle^2\\
    &=\frac{c_1}{2C}\sum\limits_{k\geq 1} k^{2\gamma_1-2/d}\langle\xi,e_k\rangle^2=\frac{c_1}{2C}\|V_1\xi\|^2,
\end{align*}
where $V_1 e_k:=k^{\gamma_1-1/d}, k\geq 1$ defines a Hilbert-Schmidt operator on $H$, since $2\gamma_1-2/d<-1$.
Thus, $N_1$ is Sazonov continuous.

To treat $N_2$, since $\alpha<2$, we first use H\"older inequality and then we rely on the previous computations, as follows:
\begin{align*}
    N_2(\xi)
    &\leq t^{1-\alpha/2} \left(\int_0^t \|\Sigma_2T_s^\ast \xi\|^2 ds\right)^{\alpha/2}=t^{1-\alpha/2}\left(\frac{c_2}{2C}\right)^{\alpha/2}\|V_2\xi\|^\alpha,
\end{align*}
where $V_2 e_k:=k^{\gamma_2-1/d}, k\geq 1$ defines a Hilbert-Schmidt operator on $H$, since $2\gamma_2-2/d<-1$. 
Consequently, the first statement is proved.

\medskip
(ii). First of all, if $\mathbf{(H_\Sigma)}$ holds then by a straightforward computation it follows that $\gamma_1<1/d-1/2, 1\leq i\leq 2$, so the conclusion in (i) is ensured, and hence the corresponding generalized Mehler semigroup cand be constructed on $H$.
Then, the idea is to apply \Cref{thm:Mehler} by checking condition $\mathbf{(H_{A,\mu})}$.
Since $\mathbf{(H_{A,\mu})}$, (i), is clearly satisfied, let us check (ii). 
Regarding condition \eqref{eq:V_1}, it is straightforward to see that
\begin{align*}
\int_H |\langle y,e_k\rangle|^p\wedge 1\;\mu_t(dy)= \int_{\mathbb{R}} |x|^p\wedge 1\mu_t^{(k)}(dx),
\end{align*}
where for each $t>0$, $\mu_t^{(k)}$ is a probability distribution on $\mathbb{R}$ whose characteristic exponent is given by 
\begin{equation*}
    \lambda_t^{(i)}(s)=  \left|s \sigma_{1,k}\right|^2\int_0^te^{-2|\lambda_k| s} ds+ \left|s \sigma_{1,k}\right|^\alpha\int_0^te^{-\alpha|\lambda_k| s} ds,\quad s \in \mathbb{R}
\end{equation*}
hence $\mu_t^{(k)}$ is the distribution of $\sigma_{1,k}\left(\int_0^te^{-2|\lambda_k| s} ds\right)^{1/2} Y_1+\sigma_{2,k}\left(\int_0^te^{-\alpha |\lambda_k| s} ds\right)^{1/\alpha} Y_2$, where $Y_1$ is a standard normal variable whilst $Y_2$ is a centered symmetric $\alpha$-stable random variable with scale $1$, $Y_1$ and $Y_2$ being independent.
Consequently,
\begin{align}
    &\int_H |\langle y,e_k\rangle|^p\wedge 1\;\mu_t(dy)\\    &=\mathbb{E}\left[\left|\sigma_{1,k}\left(\int_0^te^{-2|\lambda_k| s} ds\right)^{1/2} Y_1+\sigma_{2,k}\left(\int_0^te^{-\alpha|\lambda_k| s} ds\right)^{1/\alpha} Y_2\right|^p\wedge 1 \right]\\
    &\leq 2^{p-1}\mathbb{E}\left[\left|\sigma_{1,k}Y_1\left(\int_0^te^{-2|\lambda_k| s} ds\right)^{1/2}\right|^p\wedge 1 \right]+2^{p-1}\mathbb{E}\left[\left|\sigma_{2,k}Y_2 \left(\int_0^te^{-\alpha|\lambda_k| s} ds\right)^{1/\alpha}\right|^p\wedge 1 \right],\\
    \intertext{and using that $|u|^p\wedge 1 \leq |u|^\alpha\wedge 1$ because $\alpha<2\leq p$, we continue with}
    &\leq 2^{p-1}\mathbb{E}\left[\left|\sigma_{1,k}Y_1\left(\int_0^te^{-2|\lambda_k| s} ds\right)^{1/2}\right|^p\wedge 1 \right]+2^{p-1}\mathbb{E}\left[\left(\left|\sigma_{2,k}Y_2\right|^\alpha \int_0^te^{-\alpha|\lambda_k| s} ds\right)\wedge 1 \right],\\
    \intertext{so, taking $\beta<\alpha$ we can continue with}
    &\leq 2^{p/2-1}\frac{\sigma_{1,k}^p}{|\lambda_k|^{p/2}}\mathbb{E}\left[\left|Y_1\right|^p \right]+\frac{2^{p-1}}{\alpha}\frac{\left|\sigma_{2,k}\right|^\beta}{|\lambda_k|}\mathbb{E}\left[\left|Y_2\right|^\beta\right].
\end{align}
Note that since $\beta<\alpha$ we have that $\mathbb{E}\left[|Y_2|^\beta\right]<\infty$.
Therefore, $c_1$ defined in \eqref{eq:V_1} with $a$ and $p$ as in $\mathbf{(H_\Sigma)}$ satisfies
\begin{align}
    c_1
    &\lesssim \sum\limits_{k\geq 1} |\lambda_k|^{1+a-p/2}\sigma_{1,k}^p+\sum\limits_{k\geq 1} |\lambda_k|^{a}\sigma_{2,k}^\beta \leq \sum\limits_{k\geq 1} \left(k^{\frac{2(1+a)-p}{d}+p\gamma_1}+k^{\frac{2a}{d}+\beta\gamma_2}\right)<\infty,
\end{align}
since by $\mathbf{(H_\Sigma)}$, the two powers of $k$ are strictly less than $-1$.

The rest of assertion (ii) follows from \Cref{thm:Mehler}.

\medskip
(iii) The idea is to treat the cases $\Sigma_2=0$, respectively $\Sigma_1=0$, separately, and then construct the process for the general case from the ones corresponding to the two cases just mentioned, to be obtained by the previous assertion (ii).

Let us first assume that $\Sigma_2=0$, so that $\mathbf{(H_\Sigma)}$ reduces to:
{\it There exist $a>0, p>2$ such that
\begin{equation}\label{eq:S_1}
    \frac{2(1+a)-p}{d}+p\gamma_1<-1 \quad \mbox{ and } \quad \frac{4a}{d(p-2)}>1,
\end{equation}
}
or equivalently, by writing the two inequalities in terms of $p$ only after some straightforward calculations,
\begin{equation}\label{eq:s_1}
    \gamma_1<1/d-1/2-2/(dp).
\end{equation}
But since $\gamma_1<1/d-1/2$ is satisfied by assumption, we get that \eqref{eq:s_1} and thus \eqref{eq:S_1} are satisfied for some sufficiently large $p\geq 2$ and $a>0$.
Consequently, from assertion (ii) we get: {\it There exists a Hunt process $\left(X_{(1)},\mathbb{P}_{(1)}^x,x\in H\right)$ on $H$ whose transition function is the generalized Mehler semigroup induced by $(T_t)_{t\geq 0}$ and $(\mu_t)_{t\geq 0}$, for $\lambda$ given by \eqref{eq:lambda_example} with $\Sigma_2=0$.}

If $\Sigma_1=0$, then $\mathbf{(H_\Sigma)}$ reduces to:
{\it There exist $a>0, p>2$ such that
\begin{equation}\label{eq:S_1}
    \frac{2a}{d}+\alpha\gamma_2<-1 \quad \mbox{ and } \quad \frac{4a}{d(p-2)}>1,
\end{equation}
}
which is clearly satisfied for $a>0$ and $p>2$ sufficiently small, since by assumption $\gamma_2<-1/\alpha$.
Consequently, from assertion (ii) we get: {\it There exists a Hunt process $\left(X_{(2)},\mathbb{P}_{(2)}^x,x\in H\right)$ on $H$ whose transition function is the generalized Mehler semigroup induced by $(T_t)_{t\geq 0}$ and $(\mu_t)_{t\geq 0}$, for $\lambda$ given by \eqref{eq:lambda_example} with $\Sigma_1=0$.}

Now let us consider the general case in which both $\Sigma_1$ and $\Sigma_2$ can be non-zero, and start by taking two independent copies $\tilde{X}_{(1)}, \tilde{X}_{(2)}$ of $\left(X_{(1)},\mathbb{P}_{(1)}^0\right)$ and $\left(X_{(2)},\mathbb{P}_{(2)}^0\right)$, defined on a (common) probability space $\left(\tilde{\Omega}, \tilde{\mathcal{F}}, \tilde{\mathbb{P}}\right)$.
Further, we set
\begin{equation}
    \tilde{X}^x(t):=T_t x+\tilde{X}_{(1)}(t)+\tilde{X}_{(2)}(t), \quad t\geq 0, x\in H,
\end{equation}
so that $\left(\tilde{X}^x,\tilde{\mathbb{P}}\right)$ is a process satisfying the simple Markov property, with c\`adl\`ag paths in $H$, and starting at $x\in H$.
Moreover, one can easily see that
\begin{equation*}
    \mathbb{E}^{\tilde{\mathbb{P}}}\left\{f(\tilde{X}^x(t))\right\}=P_tf(x), \quad t\geq 0,x\in H, f\in b\mathcal{B},
\end{equation*}
where $\left(P_t\right)_{t\geq 0}$ is the generalized Mehler semigroup induced by $(T_t)_{t\geq 0}$ and $(\mu_t)_{t\geq 0}$.
Consequently, we can apply \cite[Proposition 2.1]{BeCiRo24a} to deduce that there exists a right process $X$ with c\'adl\`ag paths in $H$ (with respect to the norm topology) and transition function $(P_t)_{t\geq 0}$.
The fact that $X$ is also Hunt follows by e.g. \Cref{rem:cadlag+Feller=Hunt}, (ii).
\end{proof}

\begin{rem}\label{rem:optimal}
Note that in the previous example, if we take $\Sigma_2=0$, then the imposed condition $\gamma_1<1/d-1/2$ is precisely the condition obtained right after \eqref{eq:Weyl}, which is also equivalent to \eqref{eq:condition_stoch_conv}. 
Thus, at least in this particular case, the general result \Cref{thm:Mehler} renders optimal conditions.
\end{rem}

\medskip
\paragraph{Acknowledgements.} 
M. R. gratefully acknowledges financial support by the Deutsche Forschungsgemeinschaft
(DFG, German Research Foundation) -- Project-ID 317210226 -- SFB 1283. 
L. B. was supported by a grant of the Ministry of Research, Innovation and Digitization, CNCS - UEFISCDI,
project number PN-III-P4-PCE-2021-0921, within PNCDI III. 
I. C. acknowledges support from the Ministry of Research, Innovation and Digitization (Romania), grant CF-194-PNRR-III-C9-2023.\\

\end{document}